\newtheorem{thm}{Theorem}[section]
\newtheorem*{theorem*}{Theorem}
\newtheorem*{acknowledgement*}{Acknowledgement}
\newtheorem{cor}[thm]{Corollary}
\newtheorem{lem}[thm]{Lemma}
\newtheorem{prop}[thm]{Proposition}
\theoremstyle{definition}
\theoremstyle{remark}
\newtheorem{rem}[thm]{Remark}
\numberwithin{equation}{section}
\newcommand{\set}[1]{\left\{#1\right\}}
\newcommand{\Real}{\mathbb R}
\newcommand{\func}[1]{\ensuremath{\operatorname{#1}} }
\newcommand{\Div}[0]{\func{div}}
\newcommand{\spt}[0]{\func{spt}}
\newcommand{\cC}[0]{\mathcal{C}}
\title[Relative expander entropy in the presence of a two-sided obstacle]{Relative expander entropy in the presence of a two-sided obstacle and applications}
\author{Jacob Bernstein}
\address{Department of Mathematics, Johns Hopkins University, 3400 N. Charles Street, Baltimore, MD 21218}
\email{bernstein@math.jhu.edu}
\author{Lu Wang}
\address{Department of Mathematics, California Institute of Technology, 1200 E. California Boulevard, Pasadena, CA 91125}
\email{drluwang@caltech.edu}
\begin{document}
	
\begin{abstract}
We study a notion of relative entropy motivated by self-expanders of mean curvature flow. In particular, we obtain the existence of this quantity for arbitrary hypersurfaces trapped between two self-expanders that are asymptotic to the same cone and bound a domain.  This allows us to begin to develop the variational theory for the relative entropy functional for the associated obstacle problem. We also obtain a version of the forward monotonicity formula for mean curvature flow proposed by Ilmanen.
\end{abstract}	
	
\maketitle

\section{Introduction} \label{IntroSec}
A \emph{hypersurface}, i.e., a properly embedded codimension-one submanifold, $\Sigma\subset\mathbb{R}^{n+1}$, is a \emph{self-expander} if
\begin{equation} \label{ExpanderEqn}
\mathbf{H}_\Sigma=\frac{\mathbf{x}^\perp}{2}.
\end{equation}
Here 
$$
\mathbf{H}_\Sigma=\Delta_\Sigma\mathbf{x}=-H_\Sigma\mathbf{n}_\Sigma=-\mathrm{div}_\Sigma(\mathbf{n}_\Sigma)\mathbf{n}_\Sigma
$$
is the mean curvature vector, $\mathbf{n}_\Sigma$ is the unit normal, and $\mathbf{x}^\perp$ is the normal component of the position vector. Self-expanders arise naturally in the study of mean curvature flow. Indeed, $\Sigma$ is a self-expander if and only if the associated family of homothetic hypersurfaces
$$
\left\{\Sigma_t\right\}_{t>0}=\left\{\sqrt{t}\, \Sigma\right\}_{t>0}
$$
is a \emph{mean curvature flow} (MCF). That is, a solution to 
$$
\left(\frac{\partial \mathbf{x}}{\partial t}\right)^\perp=\mathbf{H}_{\Sigma_t}.
$$
Given integers $k\geq 1$ and $n\geq 2$, $\Sigma$ is a \emph{$C^{k}$-asymptotically conical hypersurface in $\mathbb{R}^{n+1}$} with asymptotic cone $\cC=\mathcal{C}(\Sigma)$ if $\lim_{\rho\to 0^+} \rho \Sigma=\cC$ in $C^{k}_{loc} (\mathbb{R}^{n+1}\setminus\{\mathbf{0}\})$, where $\cC$ is a $C^k$-regular cone.  The space of such hypersurfaces is denoted by $\mathcal{ACH}^k_n$. If $\Sigma\in \mathcal{ACH}^k_n$ is a self-expander, then its associated flow emerges from $\cC(\Sigma)$ and so these self-expanders model how MCF resolves conical singularities.

Self-expanders are the critical points of the functional
$$
E[\Sigma]=\int_{\Sigma} e^{\frac{|\mathbf{x}|^2}{4}} d\mathcal{H}^n
$$
where $\mathcal{H}^n$ is $n$-dimensional Hausdorff measure. Due to the rapid growth of the weight this functional takes the value infinity on any asymptotically conical self-expander.  However, following a suggestion of Ilmanen \cite{IlmanenRel}, for $\Gamma_0, \Gamma_1\in \mathcal{ACH}^k_n$ with $\cC(\Gamma_0)=\cC(\Gamma_1)$ one may consider, when defined, the \emph{relative expander entropy}
$$
E_{rel}[\Gamma_1, \Gamma_0] =\lim_{R\to \infty} E_{rel}[\Gamma_1, \Gamma_0; \bar{B}_R]
$$
where
\begin{align*}
E_{rel}[\Gamma_1, \Gamma_0; \bar{B}_R]&= E[\Gamma_1\cap \bar{B}_R]-E[\Gamma_0\cap \bar{B}_R]\\
&=\int_{\Gamma_1\cap \bar{B}_R} e^{\frac{|\mathbf{x}|^2}{4}} d\mathcal{H}^n- \int_{\Gamma_0\cap \bar{B}_R} e^{\frac{|\mathbf{x}|^2}{4}} d\mathcal{H}^n.
\end{align*}
In the curve case, this relative functional was studied by Ilmanen-Neves-Schulze \cite{IlmanenNevesSchulze} who used it to prove the uniqueness of an expanding network in its topological class. More recently, Deruelle-Schulze \cite{DeruelleSchulze} investigated this relative functional in general dimensions and showed it is well defined and finite for pairs of self-expanders asymptotic to the same cone. Due to the rapid growth of the weight this is done by showing that the two self-expanders converge to each other at a very rapid rate -- see, for example, Proposition \ref{StrongDecayProp} below.  As a consequence, they are able to consider $E_{rel}$ as a sort of smooth function on the moduli space of self-expanders with varying cones -- by \cite{BWBanach}, this space has a natural manifold structure.  Their analysis allows them to conclude that $E_{rel}$ is non-zero on pairs of distinct self-expanders whose common asymptotic cone is generic in an appropriate sense. 

In this paper we develop the variational theory of the functional $E_{rel}$ in the presence of a natural two-sided obstacle. Among other things we show that $E_{rel}$ is well defined and coercive for arbitrary hypersurfaces satisfying the obstacle condition -- importantly, we achieve this without assuming any regularity at infinity for the hypersurfaces. More precisely, fix two self-expanders $\Gamma_0,\Gamma_1\in\mathcal{ACH}^2_n$ with $\cC(\Gamma_0)=\cC(\Gamma_1)=\cC$ and assume there are domains in $\mathbb{R}^{n+1}$, $U_0\subseteq U_1$ so that $\partial U_i=\Gamma_i$ for $i=0,1$. Let
$$
\mathcal{H}(\Gamma_0, \Gamma_1)=\set{\Gamma=\partial U \colon \mbox{$U$ is a smooth domain in $\mathbb{R}^{n+1}$ and $U_0\subseteq U\subseteq U_1$}}
$$
be the space of hypersurfaces trapped between $\Gamma_0$ and $ \Gamma_1$. While elements of $\mathcal{H}(\Gamma_0,\Gamma_1)$ are asymptotic to $\cC$ in the Hausdorff distance, in general there is no other asymptotic regularity.

We first show that the relative expander entropy $E_{rel}[\cdot, \Gamma_0]$ is well defined (possibly positive infinite) for all $\Gamma\in \mathcal{H}(\Gamma_0, \Gamma_1)$. 
 
\begin{thm}\label{RelEntropyMainThm}
If $\Gamma\in \mathcal{H}(\Gamma_0,  \Gamma_1)$, then 
 $$
E_{rel}[\Gamma, \Gamma_0]=\lim_{R\to \infty} E_{rel}[\Gamma,  \Gamma_0; \bar{B}_R] \in (-\infty, \infty].
$$
That is, the limit exists and is either real valued or positive infinity. 
\end{thm}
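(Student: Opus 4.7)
The plan is to use a calibration-type argument, constructing on the slab $W := U_1 \setminus \bar{U}_0$ a divergence-free vector field $X$ whose flux through $\Gamma_0$ reproduces the weighted area element. More precisely, I aim to produce a $C^1$ vector field $X$ on $W$ satisfying (a) $\mathrm{div}(X)=0$ in $W$, (b) $|X(\mathbf{x})|\le e^{|\mathbf{x}|^2/4}$ pointwise in $W$, and (c) $X \cdot \mathbf{n}_{\Gamma_0} = e^{|\mathbf{x}|^2/4}$ along $\Gamma_0$. Writing $X = e^{|\mathbf{x}|^2/4}\,\mathbf{N}$, these conditions translate into requiring a vector field $\mathbf{N}$ with $|\mathbf{N}|\le 1$, $\mathbf{N}|_{\Gamma_0}=\mathbf{n}_{\Gamma_0}$, and $\mathrm{div}(\mathbf{N}) + \tfrac{1}{2}\mathbf{x}\cdot\mathbf{N}=0$.

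Assuming such an $X$ is at hand, set $V := U \setminus \bar{U}_0$ and apply the divergence theorem to $V \cap B_R$, noting that the outward unit normal on $\partial(V\cap B_R)$ equals $\mathbf{n}_\Gamma$ on $\Gamma \cap \bar{B}_R$, $-\mathbf{n}_{\Gamma_0}$ on $\Gamma_0\cap\bar{B}_R$, and $\mathbf{x}/|\mathbf{x}|$ on $V\cap\partial B_R$. Combined with (a) and (c), this yields
\begin{equation*}
E_{rel}[\Gamma,\Gamma_0;\bar{B}_R] = \int_{\Gamma\cap\bar{B}_R}\bigl(e^{|\mathbf{x}|^2/4}-X\cdot\mathbf{n}_\Gamma\bigr)\,d\mathcal{H}^n - \int_{V\cap\partial B_R} X\cdot\frac{\mathbf{x}}{|\mathbf{x}|}\,d\mathcal{H}^n.
\end{equation*}
By (b) the integrand of the first term is pointwise non-negative, so this term is monotone non-decreasing in $R$ and has a limit in $[0,+\infty]$. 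For the second term, Proposition~\ref{StrongDecayProp} provides a super-exponential decay of $\Gamma_1-\Gamma_0$ at infinity, forcing $\mathcal{H}^n(W\cap\partial B_R)$ to decay strictly faster than $e^{-R^2/4}$; combined with the bound $|X|\le e^{R^2/4}$ on $\partial B_R$, this boundary term tends to $0$ as $R\to\infty$. Consequently the limit $\lim_{R\to\infty}E_{rel}[\Gamma,\Gamma_0;\bar{B}_R]$ exists in $[0,+\infty]\subseteq(-\infty,+\infty]$.

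The principal obstacle is the construction of the calibration $X$ with the pointwise bound (b). The conditions (a) and (c) alone can be arranged routinely by solving a drift Laplace equation $\Delta\phi + \tfrac{1}{2}\mathbf{x}\cdot\nabla\phi=0$ in $W$ with Neumann data $\partial_{\mathbf{n}}\phi=1$ on $\Gamma_0$, but there is no reason such a $\phi$ should satisfy $|\nabla\phi|\le 1$. The self-expander equation $\mathbf{H}_{\Gamma_0}=\mathbf{x}^{\perp}/2$ only makes the equation for $\mathbf{N}$ compatible with $\mathbf{N}|_{\Gamma_0}=\mathbf{n}_{\Gamma_0}$ at leading order; the super-exponential closeness of $\Gamma_0$ to $\Gamma_1$ near infinity must then be used to globalize and correct $\mathbf{N}$ throughout $W$ without pushing $|\mathbf{N}|$ past $1$. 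Making this construction rigorous, without imposing any stability or star-shapedness assumption on $\Gamma_0$, is where the bulk of the work lies.
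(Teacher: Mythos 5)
Your overall strategy---applying the divergence theorem in the slab to a vector field built from the unit normal of $\Gamma_0$, and using the fine asymptotic closeness of $\Gamma_0$ and $\Gamma_1$ to kill the spherical boundary term---is the right circle of ideas, and is essentially what the paper does (Lemma~\ref{DivergenceLem}, Proposition~\ref{FoliationProp}, Proposition~\ref{CalibrationProp}, Theorem~\ref{RelEntropyThm}). But insisting on an \emph{exact} calibration, i.e.\ a vector field $X$ with $\operatorname{div}X=0$, $|X|\le e^{|\mathbf{x}|^2/4}$, $X\cdot\mathbf{n}_{\Gamma_0}=e^{|\mathbf{x}|^2/4}$, is not merely the hard part of the argument; it is an obstruction. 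Such an $X$ would calibrate $\Gamma_0$ and, together with the vanishing of the boundary term, give $E_{rel}[\Gamma,\Gamma_0]\ge 0$ for every $\Gamma\in\mathcal{H}(\Gamma_0,\Gamma_1)$, i.e.\ $\Gamma_0$ would be a local $E$-minimizer in the obstacle class. That is a strong hypothesis which the theorem does not make; indeed Theorem~\ref{MinThm} is needed precisely because $\Gamma_0$ may fail to be minimizing, and in that case $E_{rel}[\cdot,\Gamma_0]$ takes negative values. Your claimed conclusion $E_{rel}\in[0,\infty]$ is therefore too strong and signals that the calibration you posit cannot exist in general (existence of a calibration and local minimality being essentially equivalent).

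The way the paper gets around this is instructive. Rather than forcing $\operatorname{div}(e^{|\mathbf{x}|^2/4}\mathbf{N})=0$ exactly, it takes the most naive candidate, $\mathbf{N}(p)=\mathbf{n}_{\Gamma_0}(\Pi_{\Gamma_0}(p))$, the nearest-point extension of the unit normal. This has $|\mathbf{N}|\equiv 1$ by fiat, and by Lemma~\ref{ExpanderMeanCurvLem} (the expander equation on the parallel hypersurfaces $\Upsilon_t$) the divergence defect satisfies
$$\left|\operatorname{div}\mathbf{N}+\tfrac{\mathbf{x}}{2}\cdot\mathbf{N}\right| \le C_1\,|\mathbf{x}|^{-n-1}e^{-|\mathbf{x}|^2/4}$$
on the region between the expanders, because there the signed distance $D_{\Gamma_0}$ decays at this rate by Proposition~\ref{StrongDecayProp}. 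Pushing this error through the divergence theorem (Lemma~\ref{DivergenceLem}, using the slice-area bound Lemma~\ref{SliceAreaLem} rather than Proposition~\ref{StrongDecayProp} directly) does not give that $E_{rel}[\Gamma,\Gamma_0;\bar B_R]$ is monotone, but it gives \emph{almost}-monotonicity with a quantitatively small loss: $E_{rel}[\Gamma,\Gamma_0;\bar B_{R_2}]\ge E_{rel}[\Gamma,\Gamma_0;\bar B_{R_1}]-C_2R_1^{-1}$ for $R_2>R_1>R_0$ (Theorem~\ref{RelEntropyThm}). This is exactly the information needed: $\liminf\ge\limsup$, so the limit exists, and by taking $R_1$ fixed it is bounded below by a finite (possibly negative) constant, giving $(-\infty,\infty]$. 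The lesson is that one should aim for an approximate calibration with a quantitatively controlled defect rather than an exact one, since the exact object does not exist without extra hypotheses on $\Gamma_0$.

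Two smaller remarks. First, the computation applying the divergence theorem to $V=U\setminus\bar U_0$ and the mechanism for killing the boundary term ($\mathcal{H}^n(W\cap\partial B_R)\le CR^{-2}e^{-R^2/4}$) are sound; they are essentially Lemma~\ref{SliceAreaLem}. Second, your observation that you only need to globalize $\mathbf{N}$ near infinity while keeping $|\mathbf{N}|\le 1$ is the right instinct, but the correct resolution is to sacrifice exactness of the divergence condition rather than try to preserve both bounds.
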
 

\begin{rem} 
Some simple observations:
\begin{enumerate}
\item By \cite[Theorem 4.1]{BWUniqueExpander}, when $2\leq n \leq 6$, for every $C^3$-regular cone $\cC\subset \Real^{n+1}$, there are unique smooth domains $U_L\subseteq U_G$ satisfying $\Gamma_L=\partial U_L$ and $\Gamma_G=\partial U_G$ are self-expanders both $C^2$-asymptotic to $\cC$ and so that any asymptotically conical self-expander $\Gamma$ with $\cC(\Gamma)=\cC$ satisfies $\Gamma\in \mathcal{H}(\Gamma_L, \Gamma_G)$. Constructions of \cite{AIC} -- see also \cite{BWDegree} -- provide many examples where $\mathcal{H}(\Gamma_L, \Gamma_G)$ is non-trivial, i.e., it has more than one element.  
\item If $\Gamma\in \mathcal{H}(\Gamma_0, \Gamma_1)\cap \mathcal{ACH}^2_n$, i.e., $\Gamma$ is both trapped between $\Gamma_0$ and $\Gamma_1$ and $C^2$-asymptotic to $\cC$, then $E_{rel}[\Gamma, \Gamma_0]$ not only exists but is also finite -- see  Proposition \ref{RelEntropyAnnuliProp}. In this case the existence of $E_{rel}$ can be shown by adapting computations of Deruelle-Schulze \cite[Proposition 3.1]{DeruelleSchulze}.
\end{enumerate}
\end{rem}

It is useful to study an anisotropically weighted analog of $E_{rel}$. To describe the space of admissible weights, first fix a subset $W\subseteq\mathbb{R}^{n+1}$. For a function $\psi\in Lip(W\times \mathbb{S}^n)$ and any $p\in W$, define $\hat{\psi}_p(\mathbf{v})=\psi(p,\mathbf{v})$ and
$$
\nabla_{\mathbb{S}^n}\psi(p,\mathbf{v})=\nabla_{\mathbb{S}^n}\hat{\psi}_p(\mathbf{v}).
$$
Consider the Banach space
$$
\mathfrak{X}(W)=\set{\psi\in Lip(W\times\mathbb{S}^n) \colon \Vert \psi\Vert_{\mathfrak{X}}<\infty}
$$
where
$$
\Vert \psi \Vert_{\mathfrak{X}}=\Vert\psi\Vert_{Lip}+\Vert\nabla_{\mathbb{S}^n}\psi\Vert_{Lip}+\sup_{(p,\mathbf{v})\in W\times\mathbb{S}^n} (1+|\mathbf{x}(p)|)|\nabla_{\mathbb{S}^n}\psi(p,\mathbf{v})|.
$$
We let
$$
\mathfrak{X}^e(W)=\set{\psi\in \mathfrak{X}(W) \colon \psi(p,\mathbf{v})=\psi(p,-\mathbf{v}), \forall (p,\mathbf{v})\in W\times \mathbb{S}^n}.
$$
Elements of $\mathfrak{X}^e(W)$ are said to be \emph{even}. Observe that an even function is naturally identified with a function of the Grassman $n$-plane bundle of $W$.  

For $\Gamma\in \mathcal{H}(\Gamma_0, \Gamma_1)$ and $\psi \in \mathfrak{X}^e(\Real^{n+1})$, let
$$
E_{rel}[\Gamma, \Gamma_0; \psi; \bar{B}_R]=\int_{\Gamma\cap \bar{B}_R} \psi(p, \mathbf{n}_\Gamma(p)) e^{\frac{|\mathbf{x}|^2}{4}} d\mathcal{H}^n -\int_{\Gamma_0\cap \bar{B}_R} \psi(p, \mathbf{n}_{\Gamma_0}(p) )e^{\frac{|\mathbf{x}|^2}{4}}d\mathcal{H}^n, 
$$
and
$$
E_{rel}[\Gamma, \Gamma_0; \psi]=\lim_{R\to \infty} E_{rel}[\Gamma, \Gamma_0; \psi; \bar{B}_R]
$$
when this limit exists. Observe that if $\psi$ has compact support, then the limit is defined. We show that if $E_{rel}[\Gamma, \Gamma_0]$ is finite, then, for all $\psi\in\mathfrak{X}^e(\mathbb{R}^{n+1})$, $E_{rel}[\Gamma, \Gamma_0; \psi]$ exists and, moreover, the map $\psi \mapsto E_{rel}[\Gamma, \Gamma_0; \psi]$ is a bounded linear functional on $ \mathfrak{X}^e(\Real^{n+1})$.

\begin{thm}\label{WeightEstThm}
If $\Gamma\in \mathcal{H}(\Gamma_0,  \Gamma_1)$ has $E_{rel}[\Gamma, \Gamma_0]<\infty$, then, for any $\psi \in \mathfrak{X}^e(\Real^{n+1})$, $E_{rel}[\Gamma, \Gamma_0; \psi]$ exists. Moreover, there is a constant $L=L(\Gamma_0,\Gamma_1,n)\geq 0$ so that, for all $\psi\in \mathfrak{X}^e(\Real^{n+1})$,
\begin{align*}
\left| E_{rel}[\Gamma, \Gamma_0; \psi] \right| \leq & L (1+ |E_{rel}[\Gamma, \Gamma_0]|)\Vert \psi \Vert_{\mathfrak{X}}.
\end{align*}	
In particular, the map $\psi\mapsto E_{rel}[\Gamma, \Gamma_0; \psi]$ is a bounded linear functional on $\mathfrak{X}^e(\Real^{n+1})$.
\end{thm}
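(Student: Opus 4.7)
The plan is to handle existence and the norm bound together, by splitting $\psi$ into a position-only piece and a remainder that decays in $|\mathbf{x}|$, and then comparing each of the resulting scalar-weight relative entropies on $\Gamma$ to its analog on $\Gamma_0$ via a Gauss--Green identity on the slab $V := U\setminus U_0$, where $U$ is the smooth domain with $\partial U=\Gamma$. The finiteness assumption $E_{rel}[\Gamma,\Gamma_0]<\infty$ feeds into the Gauss--Green step to control the $\Gamma$-boundary term, which is the only piece not governed by geometric data of $\Gamma_0,\Gamma_1$ alone.

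First I would fix a smooth unit vector field $\mathbf{v}_0$ on $\Real^{n+1}$ (say, a constant direction) and split
\[
\psi(p,\mathbf{v}) = \bar\psi(p) + \chi(p,\mathbf{v}),\qquad \bar\psi(p) := \psi(p,\mathbf{v}_0(p)).
\]
Evenness is inherited by both pieces. The weighted seminorm $(1+|\mathbf{x}(p)|)|\nabla_{\mathbb{S}^n}\psi|$ built into $\|\cdot\|_\mathfrak{X}$ gives the pointwise bound
\[
|\chi(p,\mathbf{v})| \le \pi\|\nabla_{\mathbb{S}^n}\psi\|_{L^\infty} \le \frac{\pi\|\psi\|_\mathfrak{X}}{1+|\mathbf{x}(p)|},
\]
while $\|\bar\psi\|_{Lip}\le C\|\psi\|_\mathfrak{X}$.

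For the position-only piece $\bar\psi$, I would apply Gauss--Green on $V_R:=V\cap B_R$ with the ambient vector field $\mathbf{F}:=\bar\psi\, e^{|\mathbf{x}|^2/4}\,\mathbf{N}$, where $\mathbf{N}$ is a smooth ambient extension of $\mathbf{n}_{\Gamma_0}$ to a tubular neighborhood of $\Gamma_0$ (obtained for instance from the signed distance to $\Gamma_0$). The flux of $\mathbf{F}$ through $\Gamma_0$ is exactly $\int_{\Gamma_0\cap\bar B_R}\bar\psi\, e^{|\mathbf{x}|^2/4}\,d\mathcal{H}^n$, while the flux through $\Gamma$ differs from $\int_{\Gamma\cap\bar B_R}\bar\psi\,e^{|\mathbf{x}|^2/4}\,d\mathcal{H}^n$ by the error term $\int_{\Gamma\cap\bar B_R}\bar\psi\,e^{|\mathbf{x}|^2/4}(\mathbf{N}\cdot\mathbf{n}_\Gamma - 1)\,d\mathcal{H}^n$. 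A parallel identity for the unit scalar weight converts this error into a multiple of $E_{rel}[\Gamma,\Gamma_0;\bar B_R]$ plus an absolutely convergent volume correction proportional to $\|\bar\psi\|_{Lip}$. The interior divergence of $\mathbf{F}$ is integrable on $V$ by the exponentially rapid shrinking of the slab width given by Proposition~\ref{StrongDecayProp}, and the contribution on $\partial B_R$ vanishes as $R\to\infty$. For the remainder $\chi$, I would run the same Gauss--Green comparison with the scalar weight $(1+|\mathbf{x}|)^{-1}e^{|\mathbf{x}|^2/4}$; the pointwise bound on $|\chi|$ then gives a contribution of $C(\Gamma_0,\Gamma_1,n)\|\psi\|_\mathfrak{X}$ uniformly in $R$, since the $(1+|\mathbf{x}|)^{-1}$-weighted analog of the relative entropy is finite independent of $\Gamma$.

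The step I expect to be the main obstacle is the Gauss--Green comparison on $\Gamma$, where $\Gamma$ is assumed only smooth and trapped, with no asymptotic regularity. The defect $(\mathbf{N}\cdot\mathbf{n}_\Gamma - 1)\,e^{|\mathbf{x}|^2/4}d\mathcal{H}^n$ on $\Gamma$ is not directly controlled by anything geometric, but its absolute mass should be governed by $|E_{rel}[\Gamma,\Gamma_0]|$, precisely because a unit-weight Gauss--Green on $V_R$ turns $1 - \mathbf{N}\cdot\mathbf{n}_\Gamma$ on $\Gamma$ into the difference of weighted areas. Making this comparison rigorous in the absence of asymptotic smoothness of $\Gamma$, and tracking the constant carefully, is where the factor $(1+|E_{rel}[\Gamma,\Gamma_0]|)$ in the final bound arises. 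Evenness of $\psi$ is used so that $\bar\psi$ and $\chi$ are insensitive to the (possibly inconsistent) local choice of orientation of $\mathbf{n}_\Gamma$.
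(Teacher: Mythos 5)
Your treatment of the position-only piece $\bar\psi(p)=\psi(p,\mathbf{v}_0)$ is essentially the paper's Proposition~\ref{CalibrationProp}: apply the divergence theorem on the slab with the calibration $\mathbf{N}$ extending $\mathbf{n}_{\Gamma_0}$, and control the defect $(1-\mathbf{N}\cdot\mathbf{n}_\Gamma)$ by the unit-weight version of the same identity, which brings in $E_{rel}[\Gamma,\Gamma_0]$. That part is sound.

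The gap is in the anisotropic remainder $\chi(p,\mathbf{v})=\psi(p,\mathbf{v})-\psi(p,\mathbf{v}_0)$. You propose to bound its contribution via the pointwise decay $|\chi(p,\mathbf{v})|\leq \pi\|\psi\|_\mathfrak{X}(1+|\mathbf{x}(p)|)^{-1}$ and then ``run the same Gauss--Green comparison with the scalar weight $(1+|\mathbf{x}|)^{-1}e^{|\mathbf{x}|^2/4}$''. This does not close, for two independent reasons. First, the $(1+|\mathbf{x}|)^{-1}$-weighted areas are still individually infinite: the Gaussian weight $e^{|\mathbf{x}|^2/4}$ dominates any polynomial decay, so $\int_\Gamma (1+|\mathbf{x}|)^{-1}e^{|\mathbf{x}|^2/4}\,d\mathcal{H}^n=\infty$ already when $\Gamma=\Gamma_0$. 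The finiteness of the $(1+|\mathbf{x}|)^{-1}$-weighted \emph{relative} entropy (which is what Proposition~\ref{CalibrationProp} gives) does not bound $|E_{rel}[\Gamma,\Gamma_0;\chi]|$, because $\chi(\cdot,\mathbf{n}_\Gamma)$ and $\chi(\cdot,\mathbf{n}_{\Gamma_0})$ are different functions whose cancellation is the whole issue; a pointwise bound on $|\chi|$ only gives a bound by a sum of divergent integrals. Second, if you instead try to run Gauss--Green with $\mathbf{F}_\chi=\chi(\cdot,\mathbf{N})e^{|\mathbf{x}|^2/4}\mathbf{N}$, the defect on $\Gamma$ is $\chi(\cdot,\mathbf{N})(\mathbf{N}\cdot\mathbf{n}_\Gamma-1)+[\chi(\cdot,\mathbf{N})-\chi(\cdot,\mathbf{n}_\Gamma)]$, and because $\nabla_{\mathbb{S}^n}\chi(p,\mathbf{N}(p))=\nabla_{\mathbb{S}^n}\psi(p,\mathbf{N}(p))\neq 0$ in general, the second bracket is only \emph{linear} in $|\mathbf{N}-\mathbf{n}_\Gamma|\sim\sqrt{1-(\mathbf{N}\cdot\mathbf{n}_\Gamma)^2}$; combining with the $(1+|\mathbf{x}|)^{-1}$ factor and Cauchy--Schwarz leaves $\int_\Gamma(1+|\mathbf{x}|)^{-2}e^{|\mathbf{x}|^2/4}$, which again diverges. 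Freezing at a constant $\mathbf{v}_0$ cannot produce the quadratic vanishing you need.

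What the paper does instead, and what your decomposition misses, is to subtract off the rank-two quadratic form $(\mathbf{Z}_\psi(p)\cdot\mathbf{v})(\mathbf{N}(p)\cdot\mathbf{v})$ with $\mathbf{Z}_\psi(p)=\nabla_{\mathbb{S}^n}\psi(p,\mathbf{N}(p))$, so that the remainder has $\nabla_{\mathbb{S}^n}\bar\psi(p,\mathbf{N}(p))=0$ and, using evenness and the Lipschitz control on $\nabla_{\mathbb{S}^n}\psi$ built into $\|\cdot\|_\mathfrak{X}$, satisfies the \emph{quadratic} bound $|\bar\psi(p,\mathbf{v})-\bar\psi(p,\mathbf{N}(p))|\leq C(1-(\mathbf{N}(p)\cdot\mathbf{v})^2)\|\psi\|_\mathfrak{X}$ (Lemma~\ref{PsiLem}). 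This matches exactly the weighted tilt-excess estimate $\int_\Gamma \alpha(1-(\mathbf{N}\cdot\mathbf{n}_\Gamma)^2)e^{|\mathbf{x}|^2/4}\,d\mathcal{H}^n\leq 2E[\Gamma,\Gamma_0;\alpha]+CR_1^{-4}$ (Proposition~\ref{GradEstProp}), and the quadratic form piece is handled by a separate polarization/calibration argument (Lemmas~\ref{QuadFormPrepLem}--\ref{LemQuadEstLem}). Your plan contains neither the quadratic decomposition nor any replacement for the tilt-excess estimate, so the anisotropic part remains uncontrolled.
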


Theorems \ref{RelEntropyMainThm} and \ref{WeightEstThm} allow us to begin to develop the variational theory of $E_{rel}$ in  $\mathcal{H}(\Gamma_0,  \Gamma_1)$. In particular, in \cite{BWMinMax} a mountain pass theorem for $E_{rel}$ is proved.  In this paper we study the simpler question of minimizing $E_{rel}$ in $\mathcal{H}(\Gamma_0,  \Gamma_1)$. An element $\Gamma^\prime\in \mathcal{H}(\Gamma_0, \Gamma_1)$ is an \emph{$E_{rel}$-minimizer in  $\mathcal{H}(\Gamma_0, \Gamma_1)$} if, for all $\Gamma \in  \mathcal{H}(\Gamma_0, \Gamma_1)$, $E_{rel}[\Gamma, \Gamma_0]\geq E_{rel}[\Gamma^\prime, \Gamma_0].$  We directly establish the existence of $E_{rel}$-minimizers.

\begin{thm}\label{MinThm}
When $2\leq n \leq 6$, there exists a self-expander, $\Gamma_{min}$, that is an $E_{rel}$-minimizer  in  $\mathcal{H}(\Gamma_0, \Gamma_1)$. 
\end{thm}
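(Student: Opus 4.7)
The plan is to apply the direct method of the calculus of variations followed by standard regularity for weighted minimal hypersurfaces with smooth obstacles. Since $\Gamma_0 \in \mathcal{H}(\Gamma_0,\Gamma_1)$ with $E_{rel}[\Gamma_0,\Gamma_0]=0$, the infimum $m := \inf_{\Gamma \in \mathcal{H}(\Gamma_0,\Gamma_1)} E_{rel}[\Gamma,\Gamma_0]$ satisfies $m \leq 0$. Choose a minimizing sequence $\Gamma_k = \partial U_k$ with $U_0 \subseteq U_k \subseteq U_1$ and $E_{rel}[\Gamma_k,\Gamma_0] \to m$. The trapping condition $\chi_{U_0} \leq \chi_{U_k} \leq \chi_{U_1}$ furnishes $L^1_{loc}$-precompactness of $\{\chi_{U_k}\}$, so after passing to a subsequence $\chi_{U_k} \to \chi_{U_{min}}$ for some Borel $U_{min}$ with $U_0 \subseteq U_{min} \subseteq U_1$.

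The key analytic step is to upgrade this to convergence of the reduced boundaries. I would first establish uniform local weighted area bounds of the form
\[
\int_{\Gamma_k \cap \bar B_R} e^{|\mathbf{x}|^2/4}\, d\mathcal{H}^n \leq C(\Gamma_0,\Gamma_1,n,R),
\]
valid for all $k$, together with a tail estimate showing that $E_{rel}[\Gamma_k,\Gamma_0;\mathbb{R}^{n+1}\setminus \bar B_R] \to 0$ as $R \to \infty$ uniformly in $k$. Both should follow from the calibration-type arguments driving the proof of Theorem~\ref{RelEntropyMainThm}, combined with the rapid asymptotic decay of $\Gamma_0,\Gamma_1$ to the common cone $\cC$ (Proposition~\ref{StrongDecayProp}) and the uniform upper bound $E_{rel}[\Gamma_k,\Gamma_0] \leq 0$ for the minimizing sequence. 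Given these, lower semicontinuity of weighted perimeter under $L^1_{loc}$ convergence yields
\[
E_{rel}[\partial^* U_{min}, \Gamma_0; \bar B_R] \leq \liminf_{k\to\infty} E_{rel}[\Gamma_k,\Gamma_0;\bar B_R]
\]
for each fixed $R$, and letting $R \to \infty$ together with the tail estimate gives $E_{rel}[\partial^* U_{min},\Gamma_0] \leq m$. In particular $m > -\infty$, and $U_{min}$ realizes the infimum.

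It remains to show that $\Gamma_{min} := \partial U_{min}$ is a smooth self-expander. By construction $U_{min}$ locally minimizes the uniformly elliptic, smooth-weighted integrand $e^{|\mathbf{x}|^2/4}\,d\mathcal{H}^n$ subject to the two-sided smooth obstacle $U_0 \subseteq U \subseteq U_1$. The Schoen--Simon--Almgren regularity theory, together with its standard extension to smooth obstacles, gives that $\partial^* U_{min}$ is smooth away from a closed singular set of Hausdorff dimension at most $n-7$, which is empty for $2 \leq n \leq 6$. Away from $\Gamma_0 \cup \Gamma_1$ the Euler--Lagrange equation is exactly~\eqref{ExpanderEqn}, so $\Gamma_{min}$ is a self-expander there. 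On any connected component of $\Gamma_{min}$ meeting $\Gamma_i$, the strong maximum principle for the quasilinear self-expander equation, together with unique continuation, forces coincidence with $\Gamma_i$ throughout that component. Hence $\Gamma_{min}$ is a self-expander in every case. The main obstacle is the compactness and tail-control step above: since elements of $\mathcal{H}(\Gamma_0,\Gamma_1)$ admit no a priori asymptotic regularity beyond Hausdorff convergence to $\cC$, the uniform local weighted area bound and the tail smallness must be extracted from the variational structure of $E_{rel}$ itself rather than from classical geometric compactness theorems.
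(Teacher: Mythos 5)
Your approach is essentially the paper's (Theorem~\ref{LowRegExistThm}): take a minimizing sequence, obtain uniform local weighted perimeter bounds from the coercivity furnished by Theorem~\ref{RelEntropyThm}, pass to a $BV_{loc}$ limit by Caccioppoli-set compactness, apply lower semicontinuity on each $\bar B_R$, and then invoke obstacle-problem regularity together with a maximum-principle/unique-continuation argument for $2\leq n\leq 6$.

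Two technical points are off, however. First, the claim that trapping alone, $\chi_{U_0}\leq\chi_{U_k}\leq\chi_{U_1}$, furnishes $L^1_{loc}$-precompactness of $\{\chi_{U_k}\}$ is incorrect: uniform $L^\infty$ bounds give only weak-* $L^\infty_{loc}$ subsequential limits, whose value need not be $\{0,1\}$-valued; strong $L^1_{loc}$ convergence to a characteristic function requires the uniform perimeter bounds you only establish afterwards. The logical order should be reversed: first use $E_{rel}[\Gamma_k,\Gamma_0]\leq E_{min}+1$ together with the one-sided estimate $E_{rel}[\Gamma_k,\Gamma_0;\bar B_R]\leq E_{rel}[\Gamma_k,\Gamma_0]+C_2R^{-1}$ of Theorem~\ref{RelEntropyThm} to bound $P_{\bar B_R}(U_k)$, and only then invoke BV compactness. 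Second, the ``uniform tail estimate'' $E_{rel}[\Gamma_k,\Gamma_0;\mathbb{R}^{n+1}\setminus\bar B_R]\to 0$ uniformly in $k$ is both stronger than what is available from the quoted results and unnecessary. The only input needed is the one-sided quasi-monotonicity of Theorem~\ref{RelEntropyThm}: from lower semicontinuity on $\bar B_R$ and the upper bound $E_{rel}[\Gamma_k,\Gamma_0;\bar B_R]\leq E_{rel}[\Gamma_k,\Gamma_0]+C_2R^{-1}$ one obtains $E_{rel}[\partial^* U_{min},\Gamma_0;\bar B_R]\leq m+C_2R^{-1}$, and sending $R\to\infty$ gives $E_{rel}[\partial^* U_{min},\Gamma_0]\leq m$ directly, no two-sided tail control required. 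With these two corrections your argument coincides with the one in the paper.
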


\begin{rem}\label{MinimizerRemark}
It is worth comparing the notion of $E_{rel}$-minimizer with the more standard notion of a local {$E$-minimizer}. Recall, $\Gamma^\prime\in \mathcal{H}(\Gamma_0, \Gamma_1)$ is a \emph{local $E$-minimizer in $\mathcal{H}(\Gamma_0, \Gamma_1)$} provided $E[\Gamma\cap B_{R}]\geq E[\Gamma^\prime\cap B_R]$, for any $\Gamma\in  \mathcal{H}(\Gamma_0, \Gamma_1)$ that satisfies $\Gamma\backslash B_{R}=\Gamma'\backslash B_{R}$. Clearly, any $E_{rel}$-minimizer  in  $\mathcal{H}(\Gamma_0, \Gamma_1)$ is a local $E$-minimizer in  $\mathcal{H}(\Gamma_0, \Gamma_1)$.  As observed by Deruelle-Schulze \cite[Theorem 4.1]{DeruelleSchulze}, the converse is also true: a local $E$-minimizer in  $\mathcal{H}(\Gamma_0, \Gamma_1)$ is also an $E_{rel}$-minimizer in $\mathcal{H}(\Gamma_0, \Gamma_1)$. This is because their argument uses only that $E_{rel}$ is well defined and not $-\infty$ and a good estimate on the area of ribbons as in Lemma \ref{SliceAreaLem}.
\end{rem}

Another application is the existence of a forward monotonicity formula for mean curvature flows trapped between two disjoint expanders coming out of the same cone. This implies that any mean curvature flow that emerges from a cone and that is trapped between two self-expanders is initially modeled by a self-expander --  a fact used in \cite{BWIsotopy}. Related results for harmonic map flow were obtained previously by Deruelle \cite{Deruelle}.

\begin{thm} \label{MonotoneThm}
Let $\set{\Sigma_t}_{t\in (0, T)}$ be a mean curvature flow that satisfies
\begin{enumerate}
\item $\lim_{t\to 0} \mathcal{H}^n \lfloor \Sigma_t=\mathcal{H}^n \lfloor \cC$ for $\cC$ a $C^2$-regular cone;
\item For each $0<t<T$, $t^{-1/2}\Sigma_t\in \mathcal{H}(\Gamma_0,\Gamma_1)$.
\end{enumerate}
Then, for any sequence $t_i\to 0$, there is a subsequence $t_{i_j}\to 0$ so that
$$
t_{i_j}^{-1/2} \Sigma_{t_{i_j}}\to \Gamma
$$
where $\Gamma$ is a (possibly singular) self-expander $C^1$-asymptotic to $\mathcal{C}$ and the convergence is in the sense of measures.
\end{thm}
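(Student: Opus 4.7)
\noindent\emph{Proof plan.} The strategy is to recast the convergence as a gradient-flow result under the rescaling that renders self-expanders stationary. Setting $\tau=\log t$ and $\tilde{\Sigma}_\tau:=e^{-\tau/2}\Sigma_{e^\tau}$, hypothesis (2) becomes $\tilde{\Sigma}_\tau\in\mathcal{H}(\Gamma_0,\Gamma_1)$ for all $\tau<\log T$, and a direct computation gives
\[
(\partial_\tau\tilde{\mathbf{x}})^\perp=\mathbf{H}_{\tilde{\Sigma}_\tau}-\tfrac{1}{2}\tilde{\mathbf{x}}^\perp.
\]
In particular, the self-expander equation is precisely the stationarity condition for this rescaled flow and, formally, $\frac{d}{d\tau}E[\tilde{\Sigma}_\tau]=-\int_{\tilde{\Sigma}_\tau}e^{|\mathbf{x}|^2/4}|\mathbf{H}-\mathbf{x}^\perp/2|^2\,d\mathcal{H}^n\leq 0$.

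First I would upgrade this formal identity to a rigorous monotonicity statement for $E_{rel}[\tilde{\Sigma}_\tau,\Gamma_0]$, by differentiating the truncated quantity $E_{rel}[\tilde{\Sigma}_\tau,\Gamma_0;\bar{B}_R]$ and letting $R\to\infty$. The boundary contributions on $\partial\bar{B}_R$ would be controlled by combining Lemma \ref{SliceAreaLem} (ribbon area bound), Proposition \ref{StrongDecayProp} (rapid decay of $\Gamma_0$ toward its asymptotic cone), and Theorem \ref{WeightEstThm} applied to suitable cutoffs. A uniform lower bound for $E_{rel}[\tilde{\Sigma}_\tau,\Gamma_0]$ comes from the decomposition $E_{rel}[\tilde{\Sigma}_\tau,\Gamma_0]=E_{rel}[\Gamma_1,\Gamma_0]-E_{rel}[\Gamma_1,\tilde{\Sigma}_\tau]$ together with Theorem \ref{RelEntropyMainThm} applied with the roles of the obstacles reversed. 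Integrating the resulting monotonicity in $\tau$ yields the finite dissipation
\[
\int_{-\infty}^{\log T}\int_{\tilde{\Sigma}_\tau}e^{|\mathbf{x}|^2/4}\Bigl|\mathbf{H}-\tfrac{1}{2}\mathbf{x}^\perp\Bigr|^2\,d\mathcal{H}^n\,d\tau<\infty.
\]

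Next, Lemma \ref{SliceAreaLem} supplies locally uniform area bounds on $\tilde{\Sigma}_\tau$, so from any $t_i\to 0$ a subsequence $\tau_{i_j}=\log t_{i_j}\to-\infty$ may be extracted along which $\tilde{\Sigma}_{\tau_{i_j}}$ converges as Radon measures to an integral varifold $\Gamma$ supported in $\overline{U_1\setminus U_0}$. Finiteness of the dissipation forces, along a further subsequence, $\int_{\tilde{\Sigma}_{\tau_{i_j}}}e^{|\mathbf{x}|^2/4}|\mathbf{H}-\mathbf{x}^\perp/2|^2\,d\mathcal{H}^n\to 0$, and passing to the limit in the weighted first-variation identity shows $\mathbf{H}_\Gamma=\mathbf{x}^\perp/2$ in the varifold sense, i.e., $\Gamma$ is a (possibly singular) self-expander. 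Hypothesis (1) together with the scale invariance of cones gives $\tilde{\Sigma}_\tau\to\mathcal{C}$ in measure outside any fixed compact set as $\tau\to-\infty$, and Allard's regularity theorem applied to $\Gamma$ at infinity then upgrades this to $C^1$-asymptotic convergence to $\mathcal{C}$.

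The main obstacle is the boundary-term analysis in the monotonicity identity: since elements of $\mathcal{H}(\Gamma_0,\Gamma_1)$ carry no a priori asymptotic regularity, the errors on $\partial\bar{B}_R$ cannot be bounded by a smooth Taylor expansion and must instead be absorbed using the $\mathfrak{X}$-norm framework of Theorem \ref{WeightEstThm} together with the ribbon estimate. A secondary subtlety is that $\Gamma$ is only identified as a self-expander in the weak varifold sense, consistent with the ``possibly singular'' clause in the statement.
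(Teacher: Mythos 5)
Your strategy — pass to the rescaled flow, prove a forward monotonicity formula for $E_{rel}$, deduce finite dissipation, and extract a subsequential limit satisfying the expander equation — is the same as the paper's, and the framework (Lemma \ref{SliceAreaLem}, Theorem \ref{WeightEstThm}) is used the same way in the boundary analysis. However, there is a genuine gap: you never establish a \emph{uniform upper bound} on $E_{rel}[\tilde{\Sigma}_\tau,\Gamma_0]$ as $\tau\to-\infty$, and this is what the finite-dissipation step actually requires. The forward monotonicity makes $\tau\mapsto E_{rel}[\tilde{\Sigma}_\tau,\Gamma_0]$ non-increasing, so a lower bound only shows the limit as $\tau\to-\infty$ exists in $(-\infty,+\infty]$; if that limit is $+\infty$ the total dissipation can be infinite and the whole argument collapses. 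The obstacle machinery of Sections \ref{CalibrationSec}--\ref{GeneralCalibrationSec} gives only the lower bound (Theorem \ref{RelEntropyThm}), not the upper one. The paper supplies the upper bound in Proposition \ref{MonotoneProp} by using the MCF structure itself: pseudo-locality (\cite[Theorem 1.5]{IlmanenNevesSchulze}) plus interior estimates show that for every $s$ the rescaled flow is, outside a fixed ball, a $C^2$-asymptotically conical graph over $\Gamma_0$ with uniform curvature decay, so that Proposition \ref{RelEntropyAnnuliProp} controls the tail of $E_{rel}$ from \emph{both} sides, while Huisken's monotonicity formula bounds the contribution on the fixed ball. None of this appears in your outline, and it is the crux of the argument. (Your proposed lower bound via the decomposition $E_{rel}[\tilde{\Sigma}_\tau,\Gamma_0]=E_{rel}[\Gamma_1,\Gamma_0]-E_{rel}[\Gamma_1,\tilde{\Sigma}_\tau]$ and ``reversed obstacles'' is also more convoluted than necessary — Theorem \ref{RelEntropyThm} already gives a lower bound uniform over $\mathcal{H}(\Gamma_0,\Gamma_1)$ — but that is a minor point.)

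A secondary issue is your treatment of the limit. Extracting a Radon-measure limit $\Gamma$ and then ``passing to the limit in the weighted first-variation identity'' is delicate, because first variations of Radon measures need not pass to limits and mass can concentrate. The paper instead time-shifts the rescaled flows $\nu_s^i=\nu_{s+s_i}$ and invokes Brakke's compactness theorem, obtaining a limiting \emph{rescaled Brakke flow} with zero dissipation; this flow is then static, which directly yields an expanding varifold. Your invocation of Allard regularity for the $C^1$-asymptotics is plausible but would need a separate density argument; the paper avoids it by noting that the rescaled flows are already uniformly $C^1$-asymptotic to $\mathcal{C}$ outside a fixed ball (again, via pseudo-locality), so this structure passes to the limit. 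In short, the outline is correct in spirit but leaves out the pseudo-locality-based upper bound, which is the essential new input beyond the obstacle estimates.
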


\begin{rem}
Ilmanen gave a sketch of the proof that the outermost flow from a cone is made up of stable self-expanders asymptotic to the cone -- see \cite[Lecture 2, {\bf F}]{IlmanenLec}. Thus, Hypothesis (2) of Theorem \ref{MonotoneThm} is expected to be unnecessary. 
\end{rem}

Finally, we remark that all of the above theorems also apply to lower regularity surfaces, specifically, to boundaries of Caccioppoli sets. They also apply to hypersurfaces trapped inside regions that are slightly ``thicker" than the one that lies between two ordered self-expanders.  Both of these more general situations are needed in applications and are treated in the body of the paper.

\subsection*{Acknowledgements}
The first author was partially supported by the NSF Grants DMS-1609340 and DMS-1904674 and the Institute for Advanced Study with funding provided by the Charles Simonyi Endowment. The second author was partially supported by the NSF Grants DMS-2018221(formerly DMS-1811144) and DMS-2018220 (formerly DMS-1834824), the funding from the Wisconsin Alumni Research Foundation and a Vilas Early Career Investigator Award by the University of Wisconsin-Madison, and a von Neumann Fellowship by the Institute for Advanced Study with funding from the Z\"{u}rich Insurance Company and the NSF Grant DMS-1638352. The authors would like to thank Alix Deruelle and Felix Schulze for their helpful comments.

\section{Notation and preliminaries}
We fix notation and certain conventions we will use throughout the remainder of the paper.  We also recall certain facts we will need.

\subsection{Basic notions} \label{NotionSec}
Denote a (open) ball in $\mathbb{R}^{n}$ centered at $p$ with radius $R$ by $B^{n}_R(p)$ and the closed ball by $\bar{B}^{n}_R(p)$. We often omit the superscript, $n$, when it is clear from context. We also omit the center when it is the origin. Likewise, denote an (open) annulus of inner radius $R_1$ and outer radius $R_2$ by $A_{R_1,R_2}$ and the closed annulus by $\bar{A}_{R_1,R_2}$.  We denote the closure of a set $U$ both by $\overline{U}$ and $\mathrm{cl}(U)$ and the topological boundary by $\partial U$.

Assume that $n,k\geq 2$ are integers. A \emph{cone} is a set $\cC\subseteq\mathbb{R}^{n+1}\setminus\{\mathbf{0}\}$ that is dilation invariant around the origin. That is, $\rho\mathcal{C}=\mathcal{C}$ for all $\rho>0$. The \emph{link} of the cone is the set $\mathcal{L}(\mathcal{C})=\cC\cap\mathbb{S}^{n}$, the intersection of the cone and the unit $n$-sphere. The cone is \emph{$C^{k}$-regular} if its link is an embedded, codimension-one, $C^{k}$ submanifold in $\mathbb{S}^n$. 

\subsection{Caccioppoli sets} \label{CaccioppoliSec}
Let $W$ be an open subset of $\mathbb{R}^{n+1}$. A subset $U\subseteq W$ is a \emph{Caccioppoli set} if it is a set of locally finite perimeter, that is $\mathbf{1}_U$, the characteristic function of $U$, belongs to $BV_{loc}(W)$. Given a Caccioppoli set $U$, let $\Gamma=\partial^* U$ be the reduced boundary of $U$ and let $\mathbf{n}_\Gamma$ be the outward unit normal to $U$. Without loss of generality, we assume $\mathrm{cl}(\partial^* U)=\partial U$ -- see \cite[Theorem 4.4]{Giusti}. 

For $i\in \set{0,1}$, let $U_i$ be Caccioppoli sets with $\Gamma_i=\partial^* U_i$. If $U_0\subseteq U_1$, then let
$$
\mathcal{C}(\Gamma_0,\Gamma_1)=\set{U\colon \mbox{$U$ is a Caccioppoli set and $U_0\subseteq U\subseteq U_1$}}.
$$
Let $\Omega=U_1\setminus \mathrm{cl}(U_0)$. Let $U$ be an element of $\mathcal{C}(\Gamma_0,\Gamma_1)$ and $\Gamma=\partial^* U$. For a function $\psi\in C^0_c(\overline{\Omega})$ define
$$
E[\Gamma,\Gamma_0;\psi]=\int_{\Gamma} \psi(p) e^{\frac{|\mathbf{x}(p)|^2}{4}} \, d\mathcal{H}^n-\int_{\Gamma_0} \psi(p) e^{\frac{|\mathbf{x}(p)|^2}{4}} \, d\mathcal{H}^n.
$$
More generally, for a function $\psi\in C^0_c (\overline{\Omega}\times \mathbb{S}^n)$ define
$$
E[\Gamma, \Gamma_0; \psi]=\int_{\Gamma} \psi(p, \mathbf{n}_\Gamma(p)) e^{\frac{|\mathbf{x}(p)|^2}{4}} \, d\mathcal{H}^n-\int_{\Gamma_0} \psi(p,\mathbf{n}_{\Gamma_0}(p)) e^{\frac{|\mathbf{x}(p)|^2}{4}} \, d\mathcal{H}^n.
$$
We remark that $E[\Gamma,\Gamma_0; \psi]$ is linear in $\psi$ and that when $\psi$ is {even} $E[\Gamma, \Gamma_0; \psi]$ is independent of the choice of $\mathbf{n}_{\Gamma}$ or $\mathbf{n}_{\Gamma_0}$.  

\subsection{Partial ordering of asymptotically conical hypersurfaces} \label{PartialOrderSec}
Let $\mathcal{C}$ be a $C^2$-regular cone in $\mathbb{R}^{n+1}$ so the link $\mathcal{L}(\mathcal{C})$ is an embedded codimension-one $C^2$ submanifold of $\mathbb{S}^n$. Clearly, $\mathcal{L}(\mathcal{C})$ separates $\mathbb{S}^n$ and we fix a closed set $\omega\subset\mathbb{S}^n$ so that $\partial\omega=\mathcal{L}(\mathcal{C})$. A hypersurface $\Sigma$ is \emph{asymptotic to $\mathcal{C}$} if 
$$
\lim_{\rho\to 0^+} \mathcal{H}^n\lfloor(\rho\Sigma)=\mathcal{H}^n\lfloor\mathcal{C}.
$$
When this occurs set $\mathcal{C}(\Sigma)=\mathcal{C}$. For such $\Sigma$, let $\Omega_-(\Sigma)$ be the subset of $\mathbb{R}^{n+1}\setminus\Sigma$ so that $W\cap\partial\Omega_-(\Sigma)=\Sigma$ and
$$
\lim_{\rho\to 0^+} \mathrm{cl}(\rho \Omega_-(\Sigma))\cap\mathbb{S}^n=\omega \mbox{ as closed sets}.
$$
Such $\Omega_-(\Sigma)$ is well defined by the hypotheses on $\Sigma$. Denote by $\Omega_+(\Sigma)=\mathbb{R}^{n+1}\setminus \overline{\Omega_-(\Sigma)}$. For hypersurfaces $\Sigma_0, \Sigma_1$ for which  $\mathcal{C}(\Sigma_0)=\mathcal{C}(\Sigma_1)$ write 
$$
\Sigma_0\preceq \Sigma_1 \mbox{ provided } \Omega_-(\Sigma_0) \subseteq \Omega_-(\Sigma_1).
$$

\subsection{Conventions}\label{Conventions}
We now fix conventions we will use in the remainder of the paper.  Let $\mathcal{C}$ be a $C^2$-regular cone in $\mathbb{R}^{n+1}$. Pick a closed set $\omega\subset\mathbb{S}^n$ so $\partial\omega=\mathcal{L}(\cC)$. Using $\omega$, let $\Gamma_0,\Gamma_1$ be two self-expanders both $C^2$-asymptotic to $\mathcal{C}$ and assume $\Gamma_0\preceq\Gamma_1$. Denote by $\Omega=\Omega_+(\Gamma_0)\cap\Omega_-(\Sigma_1)$. Let $\nabla$,  $\mathrm{div}$ and $\Delta$ denote, respectively, the gradient, the divergence and the Laplacian on $\mathbb{R}^{n+1}$.

If $\Gamma$ is a $C^2$-asymptotically conical self-expander in $\mathbb{R}^{n+1}$, then it follows from the interior estimates for MCF (see, e.g., Theorem 3.4 and Remark 3.6 (ii) of \cite{EHInterior}) that 
\begin{equation} \label{LinearDecayEqn}
C_{\Gamma,l}=\sup_{p\in\Gamma} \left((1+|\mathbf{x}(p)|) \sum_{i=1}^l |\nabla^i_\Gamma \mathbf{n}_{\Gamma}(p)| \right)<\infty.
\end{equation}

We also introduce the following test functions. Let
$$
\phi_{R,\delta}(p)=\left\{\begin{array}{ll} 1 & \mbox{if $p\in B_R$} \\ 1-\frac{|\mathbf{x}(p)|-R}{\delta} & \mbox{if $p\in\bar{A}_{R,R+\delta}$} \\
0 & \mbox{if $p\in\mathbb{R}^{n+1}\setminus\bar{B}_{R+\delta}$}\end{array} \right.
$$
be a cutoff. Let
$$
\alpha_{R_1, R_2, \delta}(p)= \phi_{R_2,\delta}(p)-\phi_{R_1-\delta, \delta}(p) \in Lip_c(\Real^{n+1})
$$
be the cutoff adapted to the closed annulus $\bar{A}_{R_1,R_2}$.

Finally recall that a set $Y\subset\mathbb{R}^{n+1}$ is \emph{quasi-convex} if there is a constant $C>0$ so that any pair of points $p,q\in Y$ can be joined by a curve $\beta$ in $Y$ with 
$$
\mathrm{Length}(\beta)\leq C|\mathbf{x}(p)-\mathbf{x}(q)|.
$$ 
It is readily checked that $\overline{\Omega}$ and $\overline{\Omega}\setminus\bar{B}_R$ are both quasi-convex and so, by \cite[Theorem 4.1]{Heinonen}, the space of Lipschitz functions on these domains is the same as the $W^{1,\infty}$ space. 

\subsection{Decay estimates for self-expanding ends and an area estimate} \label{DecayAreaSec}

Using estimates of the first author \cite{Bernstein} -- cf. \cite[Theorem 2.1]{DeruelleSchulze} -- one obtains strong asymptotic decay results for the ends of two expanders asymptotic to the same cone. We will use this in order to obtain sharp area estimates for the slices of large spheres lying between two ordered expanders asymptotic to the same cone.

\begin{prop}\label{StrongDecayProp}
Let $\mathcal{C}$ be a $C^2$-regular cone in $\mathbb{R}^{n+1}$. Suppose $\Sigma_0$ and $\Sigma_1$ are self-expanding ends both $C^2$-asymptotic to $\mathcal{C}$. There is a radius $\bar{R}_0=\bar{R}_0(\Sigma_0,\Sigma_1)>1$ and a constant $\bar{C}_0=\bar{C}_0(\Sigma_0,\Sigma_1)>0$ so that there is a smooth function $u\colon\Sigma_0\setminus\bar{B}_{\bar{R}_0}\to\mathbb{R}$ satisfying
$$
\Sigma_1\setminus\bar{B}_{2\bar{R}_0}\subset\set{\mathbf{x}(p)+u(p)\mathbf{n}_{\Sigma_0}(p)\colon p\in\Sigma_0\setminus\bar{B}_{\bar{R}_0}}\subset\Sigma_1
$$
and $u$ satisfies the (sharp) pointwise estimate
$$
|u|+r^{-1} |\nabla_{\Sigma_0} u| + r^{-2} |\nabla^2_{\Sigma_0} u|\leq \bar{C}_0 r^{-n-1} e^{-\frac{r^2}{4}}
$$
where $r(p)=|\mathbf{x}(p)|$ for $p\in\Sigma_0$.  Moreover, for any $R>2\bar{R}_0$, 
$$
\Sigma_1\backslash B_{R}\subset \mathcal{T}_{\bar{C}_0 R^{-n-1} e^{-\frac{R^2}{4}}} (\Sigma_0).
$$
Here $\mathcal{T}_\delta (\Sigma_0)$ is the $\delta$-tubular neighborhood of $\Sigma_0$.
\end{prop}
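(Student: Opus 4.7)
The plan is to realize $\Sigma_1$ as a normal graph over $\Sigma_0$ outside a large ball, translate the self-expander equation into an elliptic PDE for the graphing function $u$, and then invoke the sharp asymptotic decay theorem of the first author \cite{Bernstein} to obtain the quantitative bound.

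First, since both $\Sigma_0$ and $\Sigma_1$ are $C^2$-asymptotic to the same regular cone $\mathcal{C}$, the rescalings $\rho \Sigma_i$ converge in $C^2_{loc}(\mathbb{R}^{n+1}\setminus\{\mathbf{0}\})$ to $\mathcal{C}$ as $\rho\to 0^+$. Combined with a tubular neighborhood argument for $\Sigma_0$, this produces a smooth function $u\colon\Sigma_0\setminus\bar{B}_{\bar{R}_0}\to\mathbb{R}$ with small $C^2$-norm such that the image of the corresponding normal graph lies in $\Sigma_1$ and captures all of $\Sigma_1\setminus\bar{B}_{2\bar{R}_0}$; here $\bar{R}_0$ depends on both $\Sigma_0$ and $\Sigma_1$. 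Smoothness of $u$ is automatic since both hypersurfaces solve the smooth elliptic expander equation.

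Second, the condition that $\Sigma_1$ is itself a self-expander becomes a second-order quasilinear elliptic PDE for $u$. Its linearization at $u=0$ is the expander Jacobi operator $L_{\Sigma_0} u = \Delta_{\Sigma_0} u - \tfrac{1}{2}\mathbf{x}^T \cdot \nabla_{\Sigma_0} u + (|A_{\Sigma_0}|^2 + \tfrac{1}{2}) u$, with a nonlinear remainder that is quadratic in $(u,\nabla_{\Sigma_0} u, \nabla^2_{\Sigma_0} u)$ and has coefficients growing at most polynomially in $r=|\mathbf{x}|$. The key analytic feature is that the drift Laplacian $\Delta_{\Sigma_0}-\tfrac12 \mathbf{x}^T\cdot\nabla_{\Sigma_0}$ is symmetric with respect to the Gaussian weight $e^{r^2/4}$, which is precisely the weight reflected in the decay estimate.

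The core step is then to invoke the sharp asymptotic decay theorem of \cite{Bernstein} (cf.\ \cite[Theorem 2.1]{DeruelleSchulze}): any solution of this perturbed expander equation that tends to zero at infinity must in fact decay at the Gaussian rate $|u|\leq \bar{C}_0 r^{-n-1}e^{-r^2/4}$. That theorem is proved by separation of variables on the conical end, reducing to second-order ODEs for the angular modes of $u$ along the link $\mathcal{L}(\mathcal{C})$, whose only decaying solutions have precisely this rate. The pointwise bounds on $\nabla_{\Sigma_0} u$ and $\nabla^2_{\Sigma_0}u$ then follow from standard interior Schauder estimates rescaled at the natural length scale $r^{-1}$ set by the drift coefficient, which costs one factor of $r$ per derivative and matches the stated bounds.

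Finally, the tubular neighborhood statement is immediate: every point of $\Sigma_1\setminus\bar{B}_{R}$ (for $R>2\bar{R}_0$) has the form $\mathbf{x}(p)+u(p)\mathbf{n}_{\Sigma_0}(p)$ for some $p\in\Sigma_0$ with $|\mathbf{x}(p)|\gtrsim R$, so its distance to $\Sigma_0$ is controlled by $|u(p)|\leq \bar{C}_0 R^{-n-1}e^{-R^2/4}$. The main obstacle throughout is the invocation of the sharp decay theorem from \cite{Bernstein}; the precise polynomial prefactor $r^{-n-1}$ together with the Gaussian factor $e^{-r^2/4}$ is beyond what a maximum-principle argument can provide and requires the angular ODE analysis sketched above.
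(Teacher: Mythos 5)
Your proposal captures the overall architecture correctly (write $\Sigma_1$ as a normal graph $u$ over $\Sigma_0$ outside a large ball, note that $u$ solves a perturbed drift-Laplacian equation, and derive the decay), but it misdescribes the mechanism of the key decay step in a way that is actually inconsistent with the paper's proof. You assert that the sharp rate $|u|\lesssim r^{-n-1}e^{-r^2/4}$ is read off directly from the asymptotic theorem of \cite{Bernstein}, and you further claim that this rate is ``beyond what a maximum-principle argument can provide.'' The paper does the opposite: it invokes \cite[Theorem~9.1]{Bernstein} only to obtain a weighted $L^2$ bound $\int_{\Sigma_0\setminus\bar B_{\bar R_0}}u^2 e^{r^2/8}\,d\mathcal H^n<\infty$, then upgrades this via de~Giorgi--Nash--Moser and Schauder to a crude pointwise decay on the order of $e^{-r^2/32}$, and only then obtains the sharp rate by a \emph{maximum-principle} argument against the explicit barrier $\varphi=r^{-n-1}e^{-r^2/4}-r^{-n-2}e^{-r^2/4}$.

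The reason the maximum principle does give the sharp polynomial-times-Gaussian rate --- contrary to your claim --- is Lemma~\ref{BarrierEqnLem}: on a self-expanding end, $\mathscr{L}^0_{\Sigma_0}(r^d e^{-r^2/4})=-\tfrac12\bigl(n+d+O(r^{-2})\bigr)r^d e^{-r^2/4}$, so the exponent $d=-n-1$ is the unique choice that annihilates the leading term, and the subtracted $r^{-n-2}e^{-r^2/4}$ correction makes $\varphi$ a genuine supersolution (for the operator with the perturbed coefficients $\mathbf{a},b$, which by that point are known to decay like $e^{-r^2/32}$ and hence are negligible). The preliminary $L^2$/pointwise Gaussian decay is also needed structurally before the barrier can be applied: it controls the coefficients and guarantees $u\to 0$, so that comparing $u$ with $\gamma\varphi$ on $\partial B_{\bar R_0}$ and at infinity yields $|u|\leq\gamma\varphi$. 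Your sketch, taken at face value, skips the bootstrapping entirely and attributes the sharp prefactor to an angular-ODE/separation-of-variables analysis that the paper does not perform. If you do intend to cite a pointwise-sharp version of Bernstein's theorem directly, you would still need to justify the preliminary decay hypotheses that such a theorem requires, which is precisely what the bootstrapping steps do.
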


To prove Proposition \ref{StrongDecayProp} we need a couple of auxiliary lemmas which, due to their technical nature, are collected in Appendix \ref{AuxiliaryApp}.

\begin{proof}[Proof of Proposition \ref{StrongDecayProp}]
As $\Sigma_0$ and $\Sigma_1$ are both $C^2$-asymptotic to $\mathcal{C}$, it follows from \cite[Proposition 3.3]{BWProperness} that there are constants $\mathcal{R}=\mathcal{R}(\Sigma_0,\Sigma_1)>1$ and $M=M(\Sigma_0,\Sigma_1)>0$, and functions $f_0$ and $f_1$ on $\mathcal{C}\setminus\bar{B}_{\mathcal{R}}$ so that, for $i\in\set{0,1}$,
$$
\Sigma_i\setminus\bar{B}_{2\mathcal{R}}\subset\set{\mathbf{f}_i(\bar{p})=\mathbf{x}(\bar{p})+f_i(\bar{p})\mathbf{n}_{\mathcal{C}}(\bar{p})\colon \bar{p}\in\mathcal{C}\setminus\bar{B}_{\mathcal{R}}}\subset\Sigma_i
$$
with the curvature estimate
$$
\sup_{p\in\Sigma_i\setminus\bar{B}_{2\mathcal{R}}} |\mathbf{x}(p)| |A_{\Sigma_i}(p)| \leq M,
$$
and $f_i$ satisfies
$$
|f_i(\bar{p})|+|\nabla_{\mathcal{C}} f_i(\bar{p})| \leq M |\mathbf{x}(\bar{p})|^{-1} \leq M\mathcal{R}^{-1} \leq \frac{1}{2}.
$$
By the triangle inequality 
$$
\frac{1}{2} |\mathbf{x}(\bar{p})| \leq |\mathbf{f}_i(\bar{p})| \leq 2 |\mathbf{x}(\bar{p})|.
$$

As 
$$
|\mathbf{f}_1(\bar{p})-\mathbf{f}_0(\bar{p})| \leq |f_1(\bar{p})|+|f_0(\bar{p})| \leq 2M |\mathbf{x}(\bar{p})|^{-1} \leq 4M |\mathbf{f}_1(\bar{p})|^{-1}
$$
it follows that 
$$
\mathrm{dist}(\mathbf{f}_1(\bar{p}),\Sigma_0) \leq |\mathbf{f}_1(\bar{p})-\mathbf{f}_0(\bar{p})|\leq  4M |\mathbf{f}_1(\bar{p})|^{-1}.
$$
Thus, for all $q\in\Sigma_1\setminus\bar{B}_{2\mathcal{R}}$,
$$
|\mathbf{x}(q)-\Pi_{\Sigma_0}(q)| \leq 4M |\mathbf{x}(q)|^{-1}
$$
where $\Pi_{\Sigma_0}$ is the nearest point projection to $\Sigma_0$. By our choice of $M$ and the triangle inequality, if $q\in\Sigma_1\setminus\bar{B}_{4\mathcal{R}}$, then
$$
\frac{1}{2} |\mathbf{x}(q)| \leq |\Pi_{\Sigma_0}(q)| \leq 2 |\mathbf{x}(q)|
$$
and, hence, 
\begin{equation} \label{DistanceEqn}
|\mathbf{x}(q)-\Pi_{\Sigma_0}(q)| \leq 8M |\Pi_{\Sigma_0}(q)|^{-1}.
\end{equation}

Given $q\in\Sigma_1\setminus\bar{B}_{16\mathcal{R}}$, suppose $\mathbf{x}(q)=\mathbf{f}_1(\bar{p})$ for some $\bar{p}\in\mathcal{C}\setminus\bar{B}_{\mathcal{R}}$. By the previous estimates and the triangle inequality
\begin{align*}
|\mathbf{f}_0(\bar{p})-\Pi_{\Sigma_0}(q)| & \leq |\mathbf{f}_0(\bar{p})-\mathbf{f}_1(\bar{p})|+|\mathbf{x}(q)-\Pi_{\Sigma_0}(q)| \\
& \leq 4M |\mathbf{x}(q)|^{-1}+8M |\Pi_{\Sigma_0}(q)|^{-1} \leq 16 M |\Pi_{\Sigma_0}(q)|^{-1}.
\end{align*}
In particular, $|\mathbf{f}_0(\bar{p})| \geq \frac{1}{2} |\Pi_{\Sigma_0}(q)|$ and so both $\mathbf{f}_0(\bar{p})$ and $|\Pi_{\Sigma_0}(q)|$ are in $\Sigma_0\setminus \bar{B}_{4\mathcal{R}}$. By the curvature decay of $\Sigma_0$ and enlarging $\mathcal{R}$, if needed, one has
$$
d_{\Sigma_0}(\mathbf{f}_0(\bar{p}),\Pi_{\Sigma_0}(q)) \leq 2|\mathbf{f}_0(\bar{p})-\Pi_{\Sigma_0}(q)| \leq 32 M |\Pi_{\Sigma_0}(q)|^{-1}
$$
and so 
$$
|\mathbf{n}_{\Sigma_0}(\mathbf{f}_0(\bar{p}))-\mathbf{n}_{\Sigma_0}(\Pi_{\Sigma_0}(q))| \leq 8 M |\Pi_{\Sigma_0}(q)|^{-1}.
$$
One also uses the $C^1$ bound for $f_i$ to get
$$
|\mathbf{n}_{\Sigma_1}(\mathbf{f}_1(\bar{p}))-\mathbf{n}_{\Sigma_0}(\mathbf{f}_0(\bar{p}))| \leq C M |\mathbf{x}(\bar{p})|^{-1} \leq 2CM |\Pi_{\Sigma_0}(q)|^{-1} 
$$
for some $C=C(n)$. Thus, combining these two estimates gives 
\begin{equation} \label{DiffNormalEqn}
|\mathbf{n}_{\Sigma_1}(q)-\mathbf{n}_{\Sigma_0}(\Pi_{\Sigma_0}(q))| \leq 2(C+4) M |\Pi_{\Sigma_0}(q)|^{-1}.
\end{equation}

Hence, in view of \eqref{DistanceEqn} and \eqref{DiffNormalEqn}, there are constants $\bar{R}_0=\bar{R}_0(n,C,M,\mathcal{R})>1$ and $\bar{M}=\bar{M}(n,C,M)>0$ (which, in turn, depend only on $\Sigma_0$ and $\Sigma_1$) and a function $u\colon\Sigma_0\setminus\bar{B}_{\bar{R}_0}\to\mathbb{R}$ so that 
$$
\Sigma_1\setminus\bar{B}_{2\bar{R}_0} \subset\set{\mathbf{x}(p)+u(p)\mathbf{n}_{\Sigma_0}(p)\colon p\in\Sigma_0\setminus\bar{B}_{\bar{R}_0}} \subset\Sigma_1
$$
and $u$ satisfies the pointwise estimate
$$
|u(p)|+|\nabla_{\Sigma_0} u(p)|+|\nabla_{\Sigma_0}^2 u(p)| \leq \bar{M} |\mathbf{x}(p)|^{-1} \leq \frac{1}{2}.
$$
This together with Lemma \ref{ExpanderMeanCurvLem} implies that 
$$
L_{\Sigma_0} u=\Delta_{\Sigma_0} u+\frac{\mathbf{x}}{2}\cdot\nabla_{\Sigma_0} u+\left(|A_{\Sigma_0}|^2+\frac{1}{2}\right) u=\mathbf{a}\cdot\nabla_{\Sigma_0} u+bu
$$
and
$$
|\mathbf{a}|+|b| \leq \bar{C}_1 \left(|u|+|\nabla_{\Sigma_0} u|+|\mathbf{x}\cdot\nabla_{\Sigma_0} u|+|\nabla_{\Sigma_0}^2 u|\right) \leq \bar{C}_1(1+\bar{M})
$$
where $\bar{C}_1=\bar{C}_1(n,\Sigma_0)>0$. Thus, write
$$
\frac{\mathbf{x}}{2}\cdot\nabla_{\Sigma_0} u=-\Delta_{\Sigma_0} u-|A_{\Sigma_0}|^2 u+\frac{1}{2} u+\mathbf{a}\cdot\nabla_{\Sigma_0} u+bu
$$
and so, by the curvature decay of $\Sigma_0$ and estimates on $u$ and $|\mathbf{a}|+|b|$, one gets that $|\mathbf{x}\cdot\nabla_{\Sigma_0} u|$ decays linearly and, hence, so does $|\mathbf{a}|+|b|$. As such, one uses \cite[Theorem 9.1]{Bernstein} to see
$$
\int_{\Sigma_0\setminus\bar{B}_{\bar{R}_0}} u^2 e^{\frac{r^2}{8}} \, d\mathcal{H}^n <\infty.
$$
Hence, by the $L^\infty$ estimate \cite[Theorem 8.17]{GilbargTrudinger} and the Schauder estimate \cite[Theorem 6.2]{GilbargTrudinger}, one has that $|u|, |\nabla_{\Sigma_0} u|$ and $|\nabla_{\Sigma_0}^2 u|$ all decay faster than $e^{-\frac{1}{32}r^2}$ and so the same holds true for $|\mathbf{a}|$ and $b$. 
	
On $\Sigma_0\backslash \bar{B}_{\bar{R}_0}$ consider the barrier
$$
\varphi=r^{-n-1} e^{-\frac{r^2}{4}}-r^{-n-2} e^{-\frac{r^2}{4}}\leq r^{-n-1} e^{-\frac{r^2}{4}}.
$$ 
By increasing $\bar{R}_0$, if necessary, one may ensure $\varphi>0$. Moreover, using Lemma \ref{BarrierEqnLem}, one readily evaluates that, up to increasing $\bar{R}_0$ in a way that depends only on $\Sigma_0$ and $u$, 
$$
L_{\Sigma_0} \varphi \leq \mathbf{a}\cdot\nabla_{\Sigma_0} \varphi +b \varphi.
$$
Pick $\gamma>1$ large enough so that $|u| \leq \gamma \varphi$ on $\Sigma_0\cap \partial B_{\bar{R}_0}$. As $\varphi$ and $u$ both tend to $0$ as $r\to \infty$ and, up to further increasing $\bar{R}_0$,  $|A_{\Sigma_0}|^2-\frac{1}{2}-b<0$, it follows from the maximum principle that 
$$
|u| \leq \gamma \varphi \mbox{ on $\Sigma_0\setminus\bar{B}_{\bar{R}_0}$.}
$$
The pointwise estimate on derivatives of $u$ follow from standard Schauder estimates on balls for an appropriate choice of $\bar{C}_0$ -- see \cite[Corollary 4.12]{CIM} for the idea.

To complete the proof observe that when $R>2\bar{R}_0$ if $q\in \Sigma_1\backslash B_R$, then 
$$
|\mathbf{x}(q)-\Pi_{\Sigma_0}(q)|\leq 2\bar{M} |\mathbf{x}(q)|^{-1}\leq 2\bar{M} R^{-1}.
$$
Thus, 
$$
|\Pi_{\Sigma_0}(q)| \geq R-2\bar{M} R^{-1} > \frac{1}{2} R.
$$
One readily checks that 
$$
|u(\Pi_{\Sigma_0}(q))| \leq \gamma\varphi(\Pi_{\Sigma_0}(q)) \leq 2^{n+1} \gamma e^{\bar{M}} R^{-n-1} e^{-\frac{R^2}{4}}.
$$
Hence, as long as one chooses $\bar{C}_0\geq 2^{n+1}\gamma e^{\bar{M}}$, one has
$$
\Sigma_1\backslash B_{R}\subset \mathcal{T}_{\bar{C}_0R^{-n-1} e^{-\frac{R^2}{4}}}(\Sigma_0) 
$$
and this proves the final claim.
\end{proof}

An immediate consequence of Proposition \ref{StrongDecayProp} is that if $\Gamma_0$ and $\Gamma_1$ are two asymptotically conical self-expanders with $\cC(\Gamma_0)=\cC(\Gamma_1)$ and $\Gamma_0\preceq \Gamma_1$, then, for any $R>2\bar{R}_0$, the region $\Omega=\Omega_+(\Gamma_0)\cap \Omega_-(\Gamma_1)$ satisfies 
$$
\Omega \backslash B_R \subset \mathcal{T}_{\bar{C}_0 R^{-n-1} e^{-\frac{R^2}{4}}}(\Gamma_0)
$$
The above result implies that $\Omega$, the region between the two self-expanders $\Gamma_0$ and $\Gamma_1$ is ``thin" near infinity. For technical reasons important in later applications, it is useful to consider slight ``thickenings" of $\Omega$ that are still thin at infinity in this sense.  

More precisely, let $\Gamma_0^\prime$ and $\Gamma_1^\prime$ be two asymptotically conical hypersurfaces, not necessarily self-expanders, with $\cC(\Gamma_0^\prime)=\cC(\Gamma_1^\prime)=\cC(\Gamma_0)=\cC$ and so that $\Gamma_0^\prime\preceq \Gamma_0\preceq \Gamma_1^\prime$. Observe that if, in addition, $\Gamma_1 \preceq \Gamma_1^\prime$, then $\mathcal{C}(\Gamma_0,\Gamma_1)\subseteq \mathcal{C}(\Gamma_0^\prime, \Gamma_1^\prime)$. Let $\Omega^\prime=\Omega_-(\Gamma_1^\prime)\cap \Omega_+(\Gamma_0^\prime)$. The set $\Omega^\prime$ is \emph{thin at infinity relative to $\Gamma_0$} if it is quasi-convex and there are constants $\bar{C}^\prime_0=C^\prime_0(\Omega^\prime,\Gamma_0)>0$ and $\bar{R}_0^\prime=\bar{R}_0^\prime(\Omega^\prime,\Gamma_0)>1$ so that, for all $R>\bar{R}_0^\prime$,
\begin{equation}
\Omega^\prime\backslash B_R \subset \mathcal{T}_{\bar{C}_0^\prime R^{-n-1}e^{-\frac{R^2}{4}}} (\Gamma_0).
\end{equation}

Being thin at infinity may be thought of as a $C^0$ notion of ``thinness". Our arguments will mostly rely on a different notion of thinness related to the area of the ribbon sliced out by the region inside spheres. This is a weaker condition than thin at infinity.

\begin{lem} \label{SliceAreaLem}
If $\Omega^\prime$ is thin at infinity relative to $\Gamma_0$, then there is a constant $\bar{C}_1^\prime=\bar{C}_1^\prime(\Omega^\prime,\Gamma_0)$ so that, for all $R>0$, 
$$
\mathcal{H}^n(\Omega^\prime\cap \partial B_R)\leq  \bar{C}_1^\prime R^{-2} e^{-\frac{R^2}{4}}.
$$
\end{lem}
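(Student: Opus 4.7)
The plan is to split $R$ into two regimes. For bounded $R \leq R_\ast$, where $R_\ast \geq 2\bar{R}_0'$ is a fixed threshold, the trivial bound $\mathcal{H}^n(\Omega' \cap \partial B_R) \leq \mathcal{H}^n(\partial B_R) = n\omega_n R^n$ is absorbed into $\bar{C}_1' R^{-2} e^{-R^2/4}$ by choosing $\bar{C}_1'$ large enough, since the latter is bounded below by a positive constant on any compact subset of $(0, R_\ast]$. For $R > R_\ast$, the thin-at-infinity hypothesis gives
\[
\Omega' \cap \partial B_R \subseteq \mathcal{T}_{\delta(R)}(\Gamma_0) \cap \partial B_R, \qquad \delta(R) = \bar{C}_0' R^{-n-1} e^{-R^2/4},
\]
so the heart of the proof is the slice estimate $\mathcal{H}^n(\mathcal{T}_\delta(\Gamma_0) \cap \partial B_R) \leq C\, \delta\, R^{n-1}$ for a constant $C = C(\Gamma_0)$ and all $R$ sufficiently large; combining with the form of $\delta$ then yields the desired $R^{-2} e^{-R^2/4}$ decay.

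To prove the slice estimate I would exploit the asymptotic near-orthogonality of $\Gamma_0$ and the large spheres. For $q = p + s\mathbf{n}_{\Gamma_0}(p) \in \mathcal{T}_\delta(\Gamma_0) \cap \partial B_R$, with $p$ the nearest point projection of $q$ onto $\Gamma_0$ and $|s| \leq \delta$, expanding $|q|^2 = R^2$ yields
\[
\left| R^2 - |p|^2 \right| \leq 2|s|\, |\mathbf{x}(p) \cdot \mathbf{n}_{\Gamma_0}(p)| + s^2.
\]
Because $\Gamma_0$ is $C^2$-asymptotic to $\mathcal{C}$, whose unit normal is perpendicular to the position vector, $|\mathbf{x}(p) \cdot \mathbf{n}_{\Gamma_0}(p)|/|\mathbf{x}(p)| \to 0$ as $|\mathbf{x}(p)| \to \infty$, so $\left||p| - R\right|$ is much smaller than $\delta$ for $R$ large. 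The same asymptotic forces $|\nabla_{\Gamma_0} r|^2 = 1 - (\mathbf{x}(p) \cdot \mathbf{n}_{\Gamma_0}(p)/|\mathbf{x}(p)|)^2 \geq 1/2$ for $|\mathbf{x}(p)|$ large, so following the flow of $\nabla_{\Gamma_0} r$ from $p$ reaches some $p' \in \Gamma_0 \cap \partial B_R$ with $\dist_{\Gamma_0}(p, p') \leq 2\left||p|-R\right|$, and in particular at most $\delta$. Thus $|q - p'| \leq |q - p| + |p - p'| \leq 2\delta$, which on $\partial B_R$ corresponds to geodesic distance at most $3\delta$. Hence $\mathcal{T}_\delta(\Gamma_0) \cap \partial B_R$ lies in the $3\delta$-geodesic tubular neighborhood of the slice $\Gamma_0 \cap \partial B_R$ inside $\partial B_R$, so the elementary tube formula on the sphere together with the bound $\mathcal{H}^{n-1}(\Gamma_0 \cap \partial B_R) \leq C' R^{n-1}$, obtained from the asymptotic graph representation of $\Gamma_0$ over $\mathcal{C}$ used in Proposition \ref{StrongDecayProp}, completes the slice estimate.

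The principal technical hurdle is this transversality argument, but it is robust: the $C^2$-asymptotically conical structure forces $\mathbf{n}_{\Gamma_0}$ to become orthogonal to the radial direction at infinity, so $\Gamma_0$ meets large spheres nearly orthogonally and both the gradient bound and the area bound on the slice are uniform in $R$. Choosing $\bar{C}_1'$ large enough to absorb the constants from both regimes then completes the proof.
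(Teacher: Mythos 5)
Your proposal follows essentially the same strategy as the paper: reduce the estimate to the statement that $\Omega'\cap\partial B_R$ lies in a thin tube around the slice $\Gamma_0\cap\partial B_R$ inside the sphere $\partial B_R$, and then invoke a tube/volume formula on the sphere plus the bound $\mathcal{H}^{n-1}(\Gamma_0\cap\partial B_R)\lesssim R^{n-1}$. The one place you are cavalier is the step you label the ``elementary tube formula on the sphere'': the paper does not treat this as elementary but isolates it as Proposition~\ref{VolumeEstProp} (proved in the appendix), precisely because one needs a volume estimate with a uniform constant (independent of $R$) for the family of spherical tubes around the rescaled slices $R^{-1}(\Gamma_0\cap\partial B_R)$, which requires a curvature/graph hypothesis on the link. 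Your remark that the reach of $\Gamma_0\cap\partial B_R$ in $\partial B_R$ grows like $R$ while the tube radius $\delta(R)$ decays exponentially is exactly what makes the constant controllable, but it does require the kind of careful parametrization carried out in Proposition~\ref{VolumeEstProp}. The other minor deviation from the paper is how you pass from ``Euclidean $\delta$-close to $\Gamma_0$'' to ``geodesically $O(\delta)$-close to $\Gamma_0\cap\partial B_R$ in $\partial B_R$'': you argue via the lower gradient bound $|\nabla_{\Gamma_0}r|\geq 1/\sqrt{2}$ and the flow of $\nabla_{\Gamma_0}r$, whereas the paper introduces the spherical nearest-point projection $\hat\Pi_{\Gamma_0}$ and writes the boundary surfaces $\Gamma^\pm$ of the (variable-width) tubular neighborhood $\Omega_\varphi$ as angular graphs with explicitly bounded angle $\theta^\pm$. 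Both routes land in the same place and both rely on the near-orthogonality of $\mathbf{n}_{\Gamma_0}$ to the radial direction; your route is arguably more elementary in its geometric picture, while the paper's angular-graph parametrization plugs directly into Proposition~\ref{VolumeEstProp}. Finally, your handling of bounded $R$ by absorbing the trivial area bound into the constant is correct and matches the paper's brief remark at the end of the proof.
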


\begin{proof}
Let $\Pi_{\Gamma_0}$ be the nearest point projection to $\Gamma_0$. As $\Omega^\prime$ is thin at infinity, for any $q\in \Omega^\prime\setminus\bar{B}_{\bar{R}_0^\prime}$,
$$
|\mathbf{x}(q)-\Pi_{\Gamma_0}(q)| \leq \bar{C}_0^\prime |\mathbf{x}(q)|^{-n-1} e^{-\frac{|\mathbf{x}(q)|^2}{4}}.
$$
Choosing $\mathcal{R}_0>\max\set{\bar{R}_0^\prime, 2\bar{C}_0^\prime}+1$ if $q\in\Omega^\prime\setminus B_{\mathcal{R}_0}$, then
$$
\frac{1}{2}|\mathbf{x}(q)|< |\mathbf{x}(q)|-\bar{C}_0^\prime |\mathbf{x}(q)|^{-1} \leq |\Pi_{\Gamma_0}(q)| \leq |\mathbf{x}(q)|+\bar{C}_0^\prime |\mathbf{x}(q)|^{-1} < 2|\mathbf{x}(q)|
$$
and so 
$$
|\mathbf{x}(q)-\Pi_{\Gamma_0}(q)| < 2^{n+1}\bar{C}_0^\prime e^{\bar{C}_0^\prime} |\Pi_{\Gamma_0}(q)|^{-n-1} e^{-\frac{|\Pi_{\Gamma_0}(q)|^2}{4}}.
$$
Set 
$$
\varphi(p)=2^{n+1}\bar{C}_0^\prime e^{\bar{C}_0^\prime} |\mathbf{x}(p)|^{-n-1} e^{-\frac{|\mathbf{x}(p)|^2}{4}}
$$
and let 
$$
\Omega_{\varphi}=\set{\mathbf{x}(p)+t \mathbf{n}_{\Gamma_0}(p) \colon p\in \Gamma_0, |t|\leq\varphi(p)}.
$$
Thus one has
$$
\Omega^\prime\setminus B_{\mathcal{R}_0}\subset\Omega_\varphi\setminus B_{\mathcal{R}_0}.
$$
Moreover, up to increasing $\mathcal{R}_0$ in a way that depends only on $n$ and $\bar{C}_0^\prime$ one can ensure that, for all $q\in\Omega_\varphi\setminus B_{\mathcal{R}_0}$,
$$
\frac{1}{2} |\mathbf{x}(q)| \leq |\Pi_{\Gamma_0}(q)| \leq 2 |\mathbf{x}(q)|.
$$

Let
$$
\Gamma^\pm=\set{\mathbf{x}(p)\pm \varphi(p) \mathbf{n}_{\Gamma_0}(p) \colon p\in \Gamma_0}.
$$
Observe that for $R$ sufficiently large $\Gamma_R^\pm=\Gamma^\pm\backslash \bar{B}_R$ are both asymptotically conical hypersurfaces in $\Real^{n+1}\backslash \bar{B}_{R}$. Let $\delta_0=\delta_0(\Gamma_0)$ be the constant given by Proposition \ref{VolumeEstProp}. Thus, there is a radius $\mathcal{R}_1=\mathcal{R}_1(\Gamma_0, \bar{C}_0^\prime, \delta_0)>\mathcal{R}_0$ and functions $\theta^\pm$ on $\Gamma_0\setminus\bar{B}_{\mathcal{R}_1}$ so that 
$$
\Gamma^\pm_{\mathcal{R}_1}=\set{\mathbf{f}^\pm(p)=\cos\theta^\pm(p)\mathbf{x}(p)+|\mathbf{x}(p)|\sin\theta^\pm (p)\nu_{\Gamma_0}(p)\colon p\in\Gamma_0\setminus\bar{B}_{\mathcal{R}_1}}
$$
where $\nu_{\Gamma_0}(p)$ is the unit normal (in $\partial B_{|\mathbf{x}(p)|}$) to $\Gamma_0\cap\partial B_{|\mathbf{x}(p)|}$ at $p$, and $\theta^\pm$ satisfy
$$
\sup_{p\in\Gamma_0\setminus\bar{B}_{\mathcal{R}_1}} \left( |\theta^\pm(p)|+|\mathbf{x}(p)||\nabla_{\Gamma_0}\theta^\pm(p)| \right) \leq \delta_0.
$$
Let $\hat{\Pi}_{\Gamma_0}(\mathbf{y})$ be the nearest point projection (in $\partial B_{|\mathbf{y}|}$) of $\mathbf{y}$ to $\Gamma_0\cap \partial B_{|\mathbf{y}|}$. Up to increasing  $\mathcal{R}_1$, $\hat{\Pi}_{\Gamma_0}$ restricts to a $C^1$ map from $\Omega_\varphi\setminus \bar{B}_{\mathcal{R}_1}$ to $\Gamma_0$ with its gradient bound by $C>1$. 

If $\mathbf{h}^\pm=\Pi_{\Gamma_0}\circ\mathbf{f}^\pm$, then one readily checks that, for any $p\in\Gamma_0\setminus\bar{B}_{2\mathcal{R}_1}$,
\begin{align*}
|\mathbf{h}^\pm(p)-\mathbf{x}(p)| & = | \hat{\Pi}_{\Gamma_0}(\mathbf{f}^\pm(p))-\hat{\Pi}_{\Gamma_0}(\mathbf{h}^\pm(p))| \\
& \leq \Vert \nabla \hat{\Pi}_{\Gamma_0} \Vert_{C^0} | \mathbf{h}^\pm(p)-\mathbf{f}^\pm(p) | \leq C \varphi(\mathbf{h}^\pm(p)).
\end{align*}
By increasing $\mathcal{R}_1$ in a way that depends on $\bar{C}_0^\prime$ and $C$, this gives that
$$
|\varphi(\mathbf{h}^\pm(p))-\varphi(p)|  \leq  \Vert\nabla \varphi\Vert_{C^0} |\mathbf{h}^\pm(p)-\mathbf{x}(p)|<\frac{1}{2} \varphi(\mathbf{h}^\pm(p))
$$
and so $\varphi(\mathbf{h}^\pm(p)) \leq 2 \varphi(p)$. Thus, using these estimates one computes, on $\Gamma_0\setminus\bar{B}_{2\mathcal{R}_1}$,
\begin{align*}
|\mathbf{x}(p)| |\sin\theta^\pm(p) |& =|(\mathbf{h}^\pm(p)\pm \varphi(\mathbf{h}^\pm (p))\mathbf{n}_{\Gamma_0}(\mathbf{h}^\pm(p)))\cdot\nu_{\Gamma_0}(p)| \\
& \leq |(\mathbf{h}^\pm(p)-\mathbf{x}(p))\cdot\nu_{\Gamma_0}(p)|+ \varphi(\mathbf{h}^\pm(p))|\mathbf{n}_{\Gamma_0}(\mathbf{h}^\pm(p))\cdot\nu_{\Gamma_0}(p)| \\
& \leq |\mathbf{h}^\pm(p)-\mathbf{x}(p)|+\varphi(\mathbf{h}^\pm(p)) \\
& \leq 2(C+1) \varphi(p).
\end{align*}
In particular, $|\sin\theta^\pm(p)|<\frac{3}{10}$ so $|\theta^\pm(p)|\leq 2|\sin\theta^\pm (p)|$. Hence one has, on $\Gamma_0\setminus\bar{B}_{2\mathcal{R}_1}$,
$$
|\theta^\pm(p)| \leq 4(C+1)|\mathbf{x}(p)|^{-1} \varphi(p).
$$
	
It follows from Proposition \ref{VolumeEstProp} that, for all $R>2\mathcal{R}_1$,
$$
\mathcal{H}^n(\Omega_\varphi\cap \partial B_R) \leq 8(C+1) \int_{\Gamma_0\cap \partial B_R} \varphi \, d\mathcal{H}^{n-1}.
$$
As $\Gamma_0$ is asymptotic to $\mathcal{C}$, up to increasing $\mathcal{R}_1$, for all $R>2\mathcal{R}_1$,
$$
\mathcal{H}^{n-1}(\Gamma_0\cap\partial B_R) \leq 2R^{n-1}\mathcal{H}^{n-1}(\mathcal{L}(\mathcal{C})).
$$
Hence, for all $R>2\mathcal{R}_1$,
$$
\mathcal{H}^n(\Omega_\varphi\cap \partial B_R) \leq 16(C+1) \bar{C}_0^\prime \mathcal{H}^{n-1}(\mathcal{L}(\cC)) R^{-2} e^{-\frac{R^2}{4}}.
$$

As remarked before, for all $R>2\mathcal{R}_1$, 
$$
\Omega^\prime\cap \partial B_R \subset \Omega_\varphi \cap \partial B_R
$$
and, hence, 
$$
\mathcal{H}^n(\Omega^\prime\cap\partial B_R) \leq 16(C+1) \bar{C}_0^\prime \mathcal{H}^{n-1}(\mathcal{L}(\cC)) R^{-2} e^{-\frac{R^2}{4}}.
$$
The result follows for $R>2\mathcal{R}_1$ as long as 
$$
\bar{C}_1^\prime>16(C+1)\bar{C}_0^\prime\mathcal{H}^{n-1}(\mathcal{L}(\cC)).
$$
As $\mathcal{R}_1$ depends only on $\Gamma_0$ and $\Omega^\prime$, the result automatically holds for $R\leq  2\mathcal{R}_1$ as long as one chooses $\bar{C}_1^\prime$ sufficiently large.
\end{proof}

\section{Relative expander entropy} \label{CalibrationSec}
In this section we prove that the relative entropy for singular hypersurfaces, i.e., reduced boundaries of Caccioppoli sets, that lie within an asymptotically ``thin" set is well defined and not $-\infty$.  To that end we always take $\Gamma_0$ to be an asymptotically conical self-expander and $\Gamma_0^\prime, \Gamma_1^\prime$ be asymptotical conical hypsersurfaces so $\Gamma_0^\prime\preceq \Gamma_0 \preceq \Gamma_1 \preceq \Gamma_1^\prime$ and so $\Omega^\prime=\Omega_+(\Gamma_0^\prime)\cap \Omega_-(\Gamma_1^\prime)$ is thin at infinity relative to $\Gamma_0$ with constants $\bar{C}_0^\prime=\bar{C}_0^\prime(\Omega^\prime,\Gamma_0)$ and $\bar{R}_0^\prime=\bar{R}_0^\prime(\Omega^\prime,\Gamma_0)$ given in the definition. In addition to these conventions and those adopted in Section \ref{Conventions}, we also  will always take $\Gamma=\partial^* U$ for some $U\in \mathcal{C}(\Gamma_0^\prime,\Gamma_1^\prime)$. 

\begin{thm}\label{RelEntropyThm}
If $R_2>R_1>R_0$, then
$$
E_{rel}[\Gamma, \Gamma_0; \bar{B}_{R_2}]\geq E_{rel}[\Gamma, \Gamma_0; \bar{B}_{R_1}]-C_2 R_1^{-1}
$$
where $R_0=R_0(\Omega^\prime,  \Gamma_0)>1$ and $C_2=C_2(\Omega^\prime, \Gamma_0)>0$ are the constants given by Proposition \ref{CalibrationProp}. In particular, $E_{rel}[\Gamma, \Gamma_0]$ exists (possibly infinite) and, for any $R> R_0$, satisfies the estimate
$$
E_{rel}[\Gamma, \Gamma_0]\geq E_{rel}[\Gamma, \Gamma_0; \bar{B}_R]-C_2 R^{-1}.
$$
\end{thm}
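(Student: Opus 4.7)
The plan is to carry out a calibration argument, taking Proposition \ref{CalibrationProp} as a black box. That proposition should furnish, for each $R>R_0$, a Lipschitz vector field $\mathbf{V}$ on $\overline{\Omega'}\setminus\bar{B}_R$ satisfying $|\mathbf{V}|\leq 1$, the Gauss-weighted divergence identity $\operatorname{div}(e^{|\mathbf{x}|^2/4}\mathbf{V})=0$, and the normalization $\mathbf{V}\cdot \mathbf{n}_{\Gamma_0}=1$ along $\Gamma_0$. The normalization is compatible with the $L^\infty$ bound precisely because the self-expander equation $\mathbf{H}_{\Gamma_0}=\mathbf{x}^\perp/2$ realizes the extended unit normal of $\Gamma_0$ as a calibration for the expander functional; thinness of $\Omega'$ relative to $\Gamma_0$ is what permits this extension across all of $\Omega'\setminus \bar{B}_R$.

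With $\mathbf{V}$ in hand, the next step is to apply the BV divergence theorem to $e^{|\mathbf{x}|^2/4}\mathbf{V}$ on the region $W=(U\triangle U_0)\cap \bar{A}_{R_1,R_2}$, where $U,U_0$ are the Caccioppoli sets with $\partial^*U=\Gamma$ and $\partial^*U_0=\Gamma_0$. Because $e^{|\mathbf{x}|^2/4}\mathbf{V}$ is divergence free, the boundary flux over $\partial^* W$ vanishes. This boundary decomposes into the portions of $\Gamma$ and $\Gamma_0$ inside the annulus together with the spherical slices $\Omega'\cap \partial B_{R_i}$ for $i=1,2$. On $\Gamma_0$ the flux equals $e^{|\mathbf{x}|^2/4}$ by the normalization, while on $\Gamma$ it is bounded above by $e^{|\mathbf{x}|^2/4}$ because $|\mathbf{V}|\le 1$. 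Since $\Gamma$ need not be comparable to $\Gamma_0$, one should split $W$ into its pieces lying above and below $\Gamma_0$ and invoke the calibration on each side with the correct orientation. Rearranging yields
\begin{equation*}
\int_{\Gamma\cap \bar{A}_{R_1,R_2}} e^{|\mathbf{x}|^2/4}\, d\mathcal{H}^n \;-\; \int_{\Gamma_0\cap \bar{A}_{R_1,R_2}} e^{|\mathbf{x}|^2/4}\, d\mathcal{H}^n \;\geq\; -\sum_{i=1}^{2}\int_{\Omega'\cap \partial B_{R_i}} e^{|\mathbf{x}|^2/4}\, d\mathcal{H}^n.
\end{equation*}

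Then I will control the slice integrals: by Lemma \ref{SliceAreaLem}, the area of $\Omega'\cap\partial B_{R_i}$ decays like $R_i^{-2} e^{-R_i^2/4}$, and multiplying by the pointwise weight $e^{R_i^2/4}$ produces $C R_i^{-2}$. Since $R_1>R_0>1$ and $R_2>R_1$, both slice terms are dominated by $C R_1^{-1}$, giving the stated near-monotonicity. Existence of the limit follows because the bound is uniform in $R_2$: fixing $R_1$ and taking $\liminf_{R_2\to\infty}$ gives $\liminf_R E_{rel}[\Gamma,\Gamma_0;\bar{B}_R]\geq E_{rel}[\Gamma,\Gamma_0;\bar{B}_{R_1}]-C_2 R_1^{-1}$, which in particular rules out the value $-\infty$; then letting $R_1\to\infty$ forces $\limsup\leq\liminf$, so the limit exists in $(-\infty,\infty]$, and the final estimate is the $R_2\to\infty$ limit of the main inequality. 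I expect the main technical obstacle to be the BV justification of the integration by parts when $\Gamma$ is only a reduced boundary, together with the consistent handling of both sides of $\Gamma_0$ in the split of $W$; this will be addressed using standard slicing (almost every $R$ gives a well-behaved spherical section), the convention $\mathrm{cl}(\partial^* U)=\partial U$ from Section \ref{CaccioppoliSec}, and a careful setup of the calibration on each side of $\Gamma_0$.
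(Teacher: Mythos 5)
Your overall strategy --- near-monotonicity via a calibration-type estimate, controlled by the slice-area bound of Lemma \ref{SliceAreaLem}, followed by the $\liminf\geq\limsup$ argument --- is in the spirit of the paper, and the final limiting argument is correct as written. However, there are two genuine problems in how you set things up.

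First, you mischaracterize Proposition \ref{CalibrationProp}. It is not a statement producing a vector field; it is a two-sided bound on the weighted functional $E[\Gamma,\Gamma_0;\alpha_{R_1,R_2,\delta}\psi]$ for nonnegative Lipschitz weights $\psi$ with $\|\psi\|_{Lip}\leq 1$. Used as a black box, the cleanest route is simply to take $\psi\equiv 1$, which gives directly $E[\Gamma,\Gamma_0;\alpha_{R_1,R_2,\delta}]\geq -C_2R_1^{-1}$. Combined with the telescoping identity $\phi_{R_2,\delta}=\phi_{R_1,\delta}+\alpha_{R_1+\delta,R_2,\delta}$ and dominated convergence in $\delta$, that is already the whole first inequality of the theorem --- no divergence theorem, no slicing for Caccioppoli sets, no splitting of $W$. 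That is exactly how the paper proceeds, and it is significantly shorter than what you propose.

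Second, and more substantively, the exactly divergence-free calibration $\mathbf{V}$ you posit does not exist in general. Requiring $|\mathbf{V}|\leq 1$, $\mathbf{V}\cdot\mathbf{n}_{\Gamma_0}=1$ on $\Gamma_0$, and $\mathrm{div}(e^{|\mathbf{x}|^2/4}\mathbf{V})=0$ on all of $\overline{\Omega'}\setminus\bar B_R$ is too much: the self-expander equation makes $\mathbf{n}_{\Gamma_0}$ satisfy $\mathrm{div}\,\mathbf{n}_{\Gamma_0}+\tfrac{\mathbf{x}}{2}\cdot\mathbf{n}_{\Gamma_0}=0$ \emph{on $\Gamma_0$ itself}, but the natural extension (via the normal foliation / nearest-point projection) only satisfies this up to a quadratic error in the signed distance. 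This is precisely Item (4) of Proposition \ref{FoliationProp}: $|\mathrm{div}\,\mathbf{N}+\tfrac{\mathbf{x}}{2}\cdot\mathbf{N}|\leq C_1 D_{\Gamma_0}^2$, which is exponentially small in $\Omega'$ by thinness but not zero. As written your argument silently drops a bulk integral $\int_W \mathrm{div}(e^{|\mathbf{x}|^2/4}\mathbf{N})$. This term can be salvaged --- by co-area and Lemma \ref{SliceAreaLem} it is $O(\int_{R_1}^\infty t^{-n-3}e^{-t^2/4}\,dt)$, negligible compared to $R_1^{-1}$ --- but you need to include it, and doing so essentially re-derives Lemma \ref{DivergenceLem} and Proposition \ref{CalibrationProp}, which defeats the purpose of taking Proposition \ref{CalibrationProp} as a black box. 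Either use Proposition \ref{CalibrationProp} as it is actually stated (with $\psi\equiv 1$), or, if you insist on the direct calibration argument, cite Proposition \ref{FoliationProp} for the approximate calibration and account for the divergence error explicitly.
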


Our main tool will be the divergence theorem applied to appropriately chosen vector fields. 

\begin{lem}\label{DivergenceLem}
Suppose $\mathbf{Y}\in Lip_{loc}(\overline{\Omega^\prime}; \Real^{n+1})$ satisfies the following bounds for some constants $M_0>0$ and $\gamma_0<1$:
\begin{enumerate}
\item $\left|\Div \mathbf{Y}+\frac{\mathbf{x}}{2}\cdot \mathbf{Y}\right|\leq M_0 |\mathbf{x}|^{\gamma_0}$;
\item $|\mathbf{x}\cdot \mathbf{Y}|\leq M_0 |\mathbf{x}|^{\gamma_0+2}$.
\end{enumerate}
If $\psi_{\mathbf{Y}}\in C^0_{loc}(\overline{\Omega^\prime}\times \mathbb{S}^n)$ is defined by
$$
\psi_{\mathbf{Y}}(p, \mathbf{v})=\mathbf{Y}(p)\cdot \mathbf{v},
$$
then there is a positive constant $C_0=C_0(\Omega^\prime,\Gamma_0, \gamma_0)$ so that, for any $0<\frac{1}{2}R_1<R_1-\delta<R_1<R_2$, 
$$
\left|E[\Gamma, \Gamma_0; \alpha_{R_1,R_2,\delta}\psi_{\mathbf{Y}}]\right| \leq C_0 M_0 R_1^{\gamma_0-1}.
$$
\end{lem}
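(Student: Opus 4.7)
The plan is to express $E[\Gamma,\Gamma_0;\alpha_{R_1,R_2,\delta}\psi_{\mathbf{Y}}]$ as a bulk integral via the Gauss--Green theorem for Caccioppoli sets and then apply the slice area estimate of Lemma \ref{SliceAreaLem}. Let $U_0=\Omega_-(\Gamma_0)$ be the Caccioppoli set bounded by $\Gamma_0$ with normal compatible with that of $U$. First, I extend $\mathbf{Y}$ to a Lipschitz vector field $\tilde{\mathbf{Y}}$ on $\mathbb{R}^{n+1}$ using the quasi-convexity of $\overline{\Omega^\prime}$, so that $\mathbf{Z}=\alpha_{R_1,R_2,\delta}e^{|\mathbf{x}|^2/4}\tilde{\mathbf{Y}}$ is Lipschitz with compact support. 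Applying Gauss--Green to $U$ and to $U_0$ separately and subtracting yields
$$
E[\Gamma,\Gamma_0;\alpha_{R_1,R_2,\delta}\psi_{\mathbf{Y}}] = \int_{\mathbb{R}^{n+1}} (\mathbf{1}_U-\mathbf{1}_{U_0})\,\Div \mathbf{Z}\,dx.
$$
Since $U_0^\prime\subseteq U\cap U_0$ and $U\cup U_0\subseteq U_1^\prime$ from the hypotheses $\Gamma_0^\prime\preceq \Gamma_0\preceq \Gamma_1^\prime$ and $U\in \mathcal{C}(\Gamma_0^\prime,\Gamma_1^\prime)$, one has $U\triangle U_0\subseteq \overline{\Omega^\prime}$, where $\tilde{\mathbf{Y}}=\mathbf{Y}$; thus only the stated hypotheses on $\mathbf{Y}$ enter the final bound.

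Next, I expand
$$
\Div \mathbf{Z} = e^{|\mathbf{x}|^2/4}\left[\alpha_{R_1,R_2,\delta}\left(\Div\mathbf{Y}+\tfrac{\mathbf{x}}{2}\cdot \mathbf{Y}\right)+\nabla\alpha_{R_1,R_2,\delta}\cdot \mathbf{Y}\right].
$$
The first of the assumed bounds controls the first bracketed term pointwise by $M_0|\mathbf{x}|^{\gamma_0}$. For the second bracketed term, the key point is that $\alpha_{R_1,R_2,\delta}$ depends only on $|\mathbf{x}|$, so $\nabla\alpha_{R_1,R_2,\delta}$ is parallel to $\mathbf{x}/|\mathbf{x}|$ with magnitude at most $1/\delta$; the second of the assumed bounds then gives
$$
|\nabla\alpha_{R_1,R_2,\delta}\cdot \mathbf{Y}| = \frac{|\alpha^\prime_{R_1,R_2,\delta}(|\mathbf{x}|)|\,|\mathbf{x}\cdot \mathbf{Y}|}{|\mathbf{x}|}\leq \frac{M_0}{\delta}|\mathbf{x}|^{\gamma_0+1},
$$
supported in $\bar{A}_{R_1-\delta,R_1}\cup \bar{A}_{R_2,R_2+\delta}$.

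Finally, I integrate by coarea. Lemma \ref{SliceAreaLem} gives $\mathcal{H}^n(\Omega^\prime\cap \partial B_r)\leq \bar{C}_1^\prime r^{-2}e^{-r^2/4}$, so the weight $e^{r^2/4}$ cancels the exponential decay exactly. The bulk term is then bounded by
$$
M_0\bar{C}_1^\prime\int_{R_1-\delta}^{R_2+\delta} r^{\gamma_0-2}\,dr\leq \frac{M_0\bar{C}_1^\prime}{1-\gamma_0}(R_1-\delta)^{\gamma_0-1},
$$
using $\gamma_0<1$, while the $\nabla\alpha$ term, integrated over the two width-$\delta$ annuli, contributes at most $2 M_0\bar{C}_1^\prime(R_1-\delta)^{\gamma_0-1}$ after using that $r^{\gamma_0-1}$ is decreasing in $r$ and $R_2>R_1$. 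Combining these and using $R_1-\delta>R_1/2$ yields the desired bound $C_0 M_0 R_1^{\gamma_0-1}$ with $C_0=C_0(\Omega^\prime,\Gamma_0,\gamma_0)$. The principal technical subtlety lies in the Gauss--Green setup and the observation that $U\triangle U_0\subseteq \overline{\Omega^\prime}$, so that the extension of $\mathbf{Y}$ is irrelevant; once this is in place the remainder reduces to routine integration, the essential structural feature being the exact exponential cancellation between the weight $e^{|\mathbf{x}|^2/4}$ and the decay in the slice area bound.
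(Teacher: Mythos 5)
Your proof is correct and follows essentially the same route as the paper. The paper applies the divergence theorem directly to the two regions $\Omega_U^+ = U\cap\Omega_+(\Gamma_0)$ and $\Omega_U^- = (\mathbb{R}^{n+1}\setminus\overline{U})\cap\Omega_-(\Gamma_0)$ (both contained in $\overline{\Omega^\prime}$), whereas you extend $\mathbf{Y}$ to all of $\mathbb{R}^{n+1}$, apply Gauss--Green to $U$ and $U_0$ separately, and observe that the difference $\mathbf{1}_U - \mathbf{1}_{U_0}$ is supported in $U\triangle U_0\subseteq\overline{\Omega^\prime}$; since $\mathbf{1}_{U\setminus U_0}-\mathbf{1}_{U_0\setminus U}=\mathbf{1}_{\Omega_U^+}-\mathbf{1}_{\Omega_U^-}$ these are the same identity, and the subsequent pointwise bounds, coarea formula, and appeal to Lemma \ref{SliceAreaLem} match the paper's.
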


\begin{proof}
Denote by $\Omega_U^+=U\cap\Omega_+(\Gamma_0)$ and $\Omega_U^-=\left(\Real^{n+1}\backslash \overline{U}\right) \cap \Omega_-(\Gamma_0)$. The divergence theorem implies that
\begin{multline*}
\int_{\Gamma} \alpha_{R_1, R_2, \delta} \mathbf{Y} \cdot \mathbf{n}_{\Gamma} e^{\frac{|\mathbf{x}|^2}{4}} \, d\mathcal{H}^n -\int_{\Gamma_0} \alpha_{R_1, R_2, \delta} \mathbf{Y} \cdot \mathbf{n}_{\Gamma_0}e^{\frac{|\mathbf{x}|^2}{4}} \, d\mathcal{H}^n \\
= \int_{\Omega_U^+}\left(\alpha_{R_1,R_2,\delta} \left(\Div\mathbf{Y} +\frac{\mathbf{x}}{2}\cdot \mathbf{Y} \right) +\nabla \alpha_{R_1,R_2,\delta} \cdot \mathbf{Y} \right) e^{\frac{|\mathbf{x}|^2}{4}}\\
-\int_{\Omega_U^-}\left(\alpha_{R_1,R_2,\delta} \left(\Div\mathbf{Y} +\frac{\mathbf{x}}{2}\cdot \mathbf{Y} \right) +\nabla \alpha_{R_1,R_2,\delta} \cdot \mathbf{Y} \right) e^{\frac{|\mathbf{x}|^2}{4}}. 
\end{multline*}
As $\spt (\alpha_{R_1,R_2,\delta})\subseteq \bar{A}_{R_1-\delta, R_2+\delta}$ and 
$$
\nabla \alpha_{R_1,R_2,\delta}(p)=\left\{ \begin{array}{cc} \frac{\mathbf{x}(p)}{\delta |\mathbf{x}(p)|} & \mbox{if $p\in A_{R_1-\delta,R_1}$} \\  -\frac{\mathbf{x}(p)}{\delta |\mathbf{x}(p)|} & \mbox{if $p\in A_{R_2, R_2+\delta}$} \\ 0 & \mbox{otherwise} \end{array} \right.
$$ 
the hypotheses on $\mathbf{Y}$ ensure that
\begin{align*}	
&\left| \int_{\Omega_U^\pm}\left( \alpha_{R_1,R_2,\delta} \left( \Div\mathbf{Y} +\frac{\mathbf{x}}{2}\cdot \mathbf{Y} \right) +\nabla \alpha_{R_1,R_2,\delta} \cdot \mathbf{Y} \right) e^{\frac{|\mathbf{x}|^2}{4}} \right|\\
&\leq M_0 \int_{\Omega_U^\pm\cap ( \bar{A}_{R_1-\delta, R_1}\cup \bar{A}_{R_2, R_2+\delta})} \delta^{-1} |\mathbf{x}|^{\gamma_0+1}e^{\frac{|\mathbf{x}|^2}{4}}+ M_0 \int_{\Omega_U^\pm\cap \bar{A}_{R_1-\delta,R_2+\delta}}|\mathbf{x}|^{\gamma_0} e^{\frac{|\mathbf{x}|^2}{4}}\\
& \leq M_0  \int_{\Omega^\prime\cap( \bar{A}_{R_1-\delta, R_1}\cup \bar{A}_{R_2, R_2+\delta})} \delta^{-1} |\mathbf{x}|^{\gamma_0+1}e^{\frac{|\mathbf{x}|^2}{4}}+ M_0 \int_{\Omega^\prime\cap \bar{A}_{R_1-\delta,R_2+\delta}}|\mathbf{x}|^{\gamma_0} e^{\frac{|\mathbf{x}|^2}{4}}.         
\end{align*}
	
As $R_1-\delta>0$, we can use the co-area formula and Lemma \ref{SliceAreaLem} to see that
\begin{align*}
\int_{\Omega^\prime\cap \bar{A}_{R_1-\delta, R_1}} \delta^{-1} |\mathbf{x}|^{\gamma_0+1} e^{\frac{|\mathbf{x}|^2}{4}} & =\int_{R_1-\delta}^{R_1} \int_{\Omega^\prime\cap \partial B_t } \delta^{-1} t^{\gamma_0+1} e^{\frac{t^2}{4}} \, d\mathcal{H}^n dt \\
&=\int_{R_1-\delta}^{R_1} t^{\gamma_0+1} e^{\frac{t^2}{4}} \mathcal{H}^n ( \Omega^\prime\cap \partial B_t ) \, dt\\
&\leq \bar{C}^\prime_1 \delta^{-1} \int_{R_1-\delta}^{R_1} t^{\gamma_0-1} \, dt 
\end{align*}
where $\bar{C}^\prime_1$ is given by Lemma \ref{SliceAreaLem}. Hence, as $\gamma_0<1$ and $R_1-\delta>\frac{1}{2}R_1$,
$$
\int_{\Omega^\prime\cap \bar{A}_{R_1-\delta, R_1}}\delta^{-1} |\mathbf{x}|^{\gamma_0+1}e^{\frac{|\mathbf{x}|^2}{4}} \leq \bar{C}^\prime_1(R_1-\delta)^{\gamma_0-1} \leq 2^{1-\gamma_0} \bar{C}^\prime_1 R^{\gamma_0-1}_1.
$$ 
In the same way, we get
$$
\int_{\Omega^\prime\cap \bar{A}_{R_2, R_2+\delta}}\delta^{-1} |\mathbf{x}|^{\gamma_0+1}e^{\frac{|\mathbf{x}|^2}{4}} \leq \bar{C}^\prime_1 R_2^{\gamma_0-1}\leq \bar{C}^\prime_1 R_1^{\gamma_0-1}.
$$ 
	
Again, using the co-area formula and Lemma \ref{SliceAreaLem} gives that
\begin{align*}
\int_{\Omega^\prime\cap \bar{A}_{R_1-\delta,R_2+\delta}}|\mathbf{x}|^{\gamma_0} e^{\frac{|\mathbf{x}|^2}{4}}  & \leq \int_{R_1-\delta}^{R_2+\delta} t^{\gamma_0} e^{\frac{t^2}{4}} \mathcal{H}^n(\Omega^\prime\cap\partial B_t ) \, dt \\
& \leq \bar{C}^\prime_1\int_{R_1-\delta}^{R_2+\delta} t^{\gamma_0-2} \, dt\\
&\leq \frac{2^{1-\gamma_0}}{1-\gamma_0} \bar{C}^\prime_1 R_1^{\gamma_0-1}
\end{align*}
where the last inequality used that $1-\gamma_0>0$ and $R_1-\delta>\frac{1}{2} R_1$.
	
Combining the above estimates and choosing $C_0$ appropriately prove the claim.
\end{proof}

We next use a foliation near infinity by almost self-expanders to  introduce a good vector field for applying the previous lemma.  

\begin{prop}\label{FoliationProp}
There are constants $R_0=R_0(\Omega^\prime,  \Gamma_0)>1$ and $C_1=C_1(\Omega^\prime,\Gamma_0)>0$ and a smooth vector field $\mathbf{N}\colon \overline{\Omega^\prime}\backslash \bar{B}_{R_0} \to \mathbb{R}^{n+1}$ that satisfies:
\begin{enumerate}
\item \label{UnitLengthItem} $|\mathbf{N}|=1$;
\item \label{InitialItem} $\mathbf{N}|_{\Gamma_0}=\mathbf{n}_{\Gamma_0}$;
\item \label{C1EstimateItem} $|\mathbf{x}\cdot \mathbf{N}|+\sum_{i=1}^3 |\nabla^i\mathbf{N}|\leq C_1 |\mathbf{x}|^{-1}$;
\item \label{ExpanderMCItem} If $D_{\Gamma_0}$ is the signed distance to $\Gamma_0$, then 
$$
\left| \Div \mathbf{N}+\frac{\mathbf{x}}{2}\cdot\mathbf{N}+\left(|A_{\Gamma_0}|^2-\frac{1}{2}\right)D_{\Gamma_0} \right|\leq C_1 D_{\Gamma_0}^2
$$
and so 
$$
\left| \Div \mathbf{N} +\frac{\mathbf{x}}{2}\cdot \mathbf{N} \right| \leq C_1 |\mathbf{x}|^{-n-1} e^{-\frac{|\mathbf{x}|^2}{4}}.
$$
\end{enumerate}
\end{prop}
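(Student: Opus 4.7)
The plan is to take $\mathbf{N} = \nabla D_{\Gamma_0}$, the gradient of the signed distance function to $\Gamma_0$, on $\overline{\Omega^\prime}\setminus \bar{B}_{R_0}$. First I would verify this is well defined. The $C^2$-asymptotically conical assumption on $\Gamma_0$ together with \eqref{LinearDecayEqn} gives $|A_{\Gamma_0}(p)|\leq C|\mathbf{x}(p)|^{-1}$, so the normal exponential map of $\Gamma_0\setminus\bar{B}_{R_0}$ is a diffeomorphism on a tube of width comparable to $|\mathbf{x}|$ once $R_0$ is sufficiently large. Thinness of $\Omega^\prime$ relative to $\Gamma_0$ places $\overline{\Omega^\prime}\setminus\bar{B}_{R_0}$ well inside this smooth tube (the admissible width is linear in $|\mathbf{x}|$, vastly larger than the Gaussian-type width $\bar{C}_0^\prime|\mathbf{x}|^{-n-1}e^{-|\mathbf{x}|^2/4}$), so $D_{\Gamma_0}$ and hence $\mathbf{N}$ are smooth there. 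Items (1) and (2) are then immediate from the definition of signed distance.

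For item (3), each $p\in \overline{\Omega^\prime}\setminus\bar{B}_{R_0}$ decomposes uniquely as $p = p_0 + D_{\Gamma_0}(p)\mathbf{n}_{\Gamma_0}(p_0)$ with $p_0\in\Gamma_0$, giving
$$\mathbf{x}(p)\cdot\mathbf{N}(p) = \mathbf{x}(p_0)\cdot\mathbf{n}_{\Gamma_0}(p_0)+D_{\Gamma_0}(p).$$
The first term equals $-2H_{\Gamma_0}(p_0)$ by the self-expander equation and is $O(|\mathbf{x}|^{-1})$ by \eqref{LinearDecayEqn}, while the second is exponentially small by thinness. For the derivative bounds, $\nabla \mathbf{N}= \nabla^2 D_{\Gamma_0}$ equals the shape operator of $\Gamma_0$ in tangent directions and vanishes in the normal direction at points of $\Gamma_0$, so $|\nabla\mathbf{N}|\leq C|A_{\Gamma_0}| = O(|\mathbf{x}|^{-1})$ on $\Gamma_0$. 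The Riccati equation $\partial_s \nabla^2 D_{\Gamma_0}=-(\nabla^2 D_{\Gamma_0})^2$ along normal lines propagates this bound across the thin tube, and analogous evolution equations for higher normal derivatives together with \eqref{LinearDecayEqn} for $l\leq 3$ yield $\sum_{i=1}^3 |\nabla^i\mathbf{N}|\leq C|\mathbf{x}|^{-1}$.

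The key step is (4), which I would establish by Taylor expansion in $D_{\Gamma_0}$ around $\Gamma_0$. On $\Gamma_0$ itself, $\Div\mathbf{N} = \Delta D_{\Gamma_0} = H_{\Gamma_0}$ and $\mathbf{x}\cdot\mathbf{N}= \mathbf{x}\cdot\mathbf{n}_{\Gamma_0}$, so the self-expander identity $H_{\Gamma_0}+\tfrac{1}{2}\mathbf{x}\cdot\mathbf{n}_{\Gamma_0}=0$ forces $\Div\mathbf{N}+\tfrac{\mathbf{x}}{2}\cdot\mathbf{N}$ to vanish on $\Gamma_0$. Taking the trace of the Riccati equation yields $\partial_s(\Delta D_{\Gamma_0})|_{s=0} = -|A_{\Gamma_0}|^2$ at the foot point, and $\partial_s(\mathbf{x}\cdot\mathbf{N}/2)|_{s=0} = 1/2$ since $\mathbf{N}$ is parallel along normal lines; combining, the first-order Taylor coefficient of $\Div\mathbf{N}+\tfrac{\mathbf{x}}{2}\cdot\mathbf{N}$ is $-(|A_{\Gamma_0}|^2-\tfrac{1}{2})$, giving the first inequality with the quadratic remainder controlled by the $\nabla^3 D_{\Gamma_0}$ bound from (3). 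The \emph{furthermore} assertion then follows by substituting the thin-at-infinity bound $|D_{\Gamma_0}|\leq \bar{C}_0^\prime|\mathbf{x}|^{-n-1}e^{-|\mathbf{x}|^2/4}$ into the leading-order term, since the quadratic remainder is a fortiori smaller. I expect the main technical obstacle to be the bookkeeping required to ensure all constants depend only on $\Omega^\prime$ and $\Gamma_0$, and to verify that the Taylor remainders coming from the Riccati expansion remain uniformly controlled without spoiling the linear curvature decay inherited from \eqref{LinearDecayEqn}.
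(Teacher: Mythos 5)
Your vector field $\mathbf{N}=\nabla D_{\Gamma_0}$ is exactly the same as the paper's $\mathbf{N}(p)=\mathbf{n}_{\Gamma_0}(\Pi_{\Gamma_0}(p))$ (since the gradient of the signed distance is the normal at the foot point), and items (1)--(3) are handled essentially the same way: linear curvature decay from \eqref{LinearDecayEqn}, the decomposition $p=\Pi_{\Gamma_0}(p)+D_{\Gamma_0}(p)\mathbf{n}_{\Gamma_0}(\Pi_{\Gamma_0}(p))$, and the self-expander equation to control $\mathbf{x}\cdot\mathbf{N}$. Where your argument genuinely diverges is item (4). The paper expresses $\Div\mathbf{N}+\tfrac{\mathbf{x}}{2}\cdot\mathbf{N}$ as $H_{\Upsilon_t}+\tfrac{\mathbf{x}}{2}\cdot\mathbf{n}_{\Upsilon_t}$ on the parallel surface $\Upsilon_t$ and invokes Lemma \ref{ExpanderMeanCurvLem}, a general normal-graph linearization of the expander operator, whereas you work directly with the Riccati ODE $\partial_s S_s=-S_s^2$ for the shape operator of the distance level sets, read off $g(0)=0$ and $g'(0)=-(|A_{\Gamma_0}|^2-\tfrac12)$ from the self-expander equation and $\operatorname{tr}(\partial_s S_s)=-|A_{\Gamma_0}|^2$, and bound the remainder by $\operatorname{tr}(S_s^3)$. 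Your route is more self-contained and elementary for this specific one-parameter (constant normal offset) deformation, while the paper's Lemma \ref{ExpanderMeanCurvLem} is heavier machinery that it reuses elsewhere for non-constant normal graphs (e.g.\ in the proofs of Propositions \ref{StrongDecayProp} and \ref{RelEntropyAnnuliProp}), so the two approaches trade a lighter direct computation against a reusable lemma. One minor overclaim: you assert the normal exponential map is a diffeomorphism on a tube of width comparable to $|\mathbf{x}|$, but injectivity at that scale depends on global separation of sheets, not just the focal radius $\gtrsim|\mathbf{x}|$; the paper only needs (and only proves) a fixed-width tubular neighborhood $\mathcal{T}_{\epsilon_0}(\Gamma_0)$, which is all the thinness hypothesis requires, so this does not affect the correctness of the argument.
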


\begin{proof}
Let $\Pi_{\Gamma_0}$ be the nearest point projection to $\Gamma_0$.  As $\Gamma_0$ is $C^2$-asymptotically conical, there is an $\epsilon_0=\epsilon_0(\Gamma_0)\in (0,1)$ so that 
$$
\Psi\colon \mathcal{T}_{\epsilon_0}(\Gamma_0) \to \Gamma_0 \times (-\epsilon_0,\epsilon_0)
$$
given by $\Psi(p)=(\Pi_{\Gamma_0}(p), D_{\Gamma_0}(p))$ is a diffeomorphism. Hence, setting 
$$
\mathbf{N}(p)=\mathbf{n}_{\Gamma_0}(\Pi_{\Gamma_0}(p))
$$ 
one obtains a vector field on $\mathcal{T}_{\epsilon_0}(\Gamma_0)$ that is readily seen to satisfy Items \eqref{UnitLengthItem} and \eqref{InitialItem}.  As $\Gamma_0$ is a self-expander, both $\mathbf{n}_{\Gamma_0}$ and $\Pi_{\Gamma_0}$ are smooth and, by the chain rule, so is $\mathbf{N}$. 

By \eqref{LinearDecayEqn} with $\Gamma=\Gamma_0$,
$$
C_{\Gamma_0,3}=\sup_{q\in\Gamma_0} \left( \left(1+|\mathbf{x}(q)|\right)\sum_{i=1}^3 |\nabla^i_{\Gamma_0}\mathbf{n}_{\Gamma_0}(q)| \right) < \infty.
$$
As, up to shrinking $\epsilon_0$, one has, for $i=1,..,3$, $|\nabla^i\Pi_{\Gamma_0}(p)|\leq 2$, it follows from the chain rule that, for all $p\in\mathcal{T}_{\epsilon_0}(\Gamma_0)$,
$$
\sum_{i=1}^3 |\nabla^i\mathbf{N}(p)| \leq 2C_{\Gamma_0,3} |\Pi_{\Gamma_0}(p)|^{-1}.
$$
Observe that if $p\in\mathcal{T}_{\epsilon_0}(\Gamma_0)\setminus\bar{B}_{2\epsilon_0^{-1}}$, then 
$$
\frac{1}{2} |\mathbf{x}(p)| \leq |\Pi_{\Gamma_0}(p)| \leq 2 |\mathbf{x}(p)|
$$
and so 
$$
\sum_{i=1}^3 |\nabla^i \mathbf{N}(p)| \leq 4C_{\Gamma_0,3} |\mathbf{x}(p)|^{-1}.
$$

It is readily checked that
\begin{equation} \label{NormalPositionEqn}
\begin{split}
\mathbf{x}(p)\cdot\mathbf{N}(p) & =\left(\Pi_{\Gamma_0}(p)+D_{\Gamma_0}(p)\mathbf{n}_{\Gamma_0}(\Pi_{\Gamma_0}(p))\right)\cdot\mathbf{n}_{\Gamma_0}(\Pi_{\Gamma_0}(p)) \\
& = \Pi_{\Gamma_0}(p)\cdot\mathbf{n}_{\Gamma_0}(\Pi_{\Gamma_0}(p))+D_{\Gamma_0}(p) \\
& =-2H_{\Gamma_0}(\Pi_{\Gamma_0}(p))+D_{\Gamma_0}(p).
\end{split}
\end{equation}
As $\Omega^\prime$ is thin at infinity, the definition ensures that there is a radius $R_0=R_0(\Gamma_0,\epsilon_0, \bar{R}_0^\prime, \bar{C}_0^\prime)$ and a constant $C=C(\Gamma_0,\bar{C}_0^\prime)$ so that $\overline{\Omega^\prime}\setminus\bar{B}_{R_0}\subset\mathcal{T}_{\epsilon_0}(\Gamma_0)\setminus\bar{B}_{2\epsilon_0^{-1}}$ and, for all $p\in\Omega^\prime\setminus \bar{B}_{R_0}$,
$$
|\mathbf{x}\cdot\mathbf{N}(p)| \leq C |\mathbf{x}(p)|^{-1}.
$$
Thus we have shown Item \eqref{C1EstimateItem} as long as we choose $C_1>\max\set{4C_{\Gamma_0,3}, C}$.

To see the last claim, up to shrinking $\epsilon_0$ so $\epsilon_0<\frac{1}{8C_{\Gamma_0,3}}$ one has, for every $t\in (-\epsilon_0,\epsilon_0)$,
$$
\Upsilon_t=\set{\mathbf{x}(p)+t\mathbf{n}_{\Gamma_0}(p)\colon p\in\Gamma_0}
$$
is a hypersurface in $\mathbb{R}^{n+1}$ and, by Lemma \ref{ExpanderMeanCurvLem},
$$
\left| H_{\Upsilon_t}+\frac{\mathbf{x}}{2}\cdot\mathbf{n}_{\Upsilon_t}+\left(|A_{\Gamma_0}|^2-\frac{1}{2}\right)t  \right| \leq \bar{C}_1 t^2
$$
where $\bar{C}_1=\bar{C}_1(n,C_{\Gamma_0,3})>0$. As
$$
\Div\mathbf{N}(p)+\frac{\mathbf{x}(p)}{2}\cdot\mathbf{N}(p)=H_{\Upsilon_t}(p)+\frac{\mathbf{x}(p)}{2}\cdot\mathbf{n}_{\Upsilon_t}(p)
$$
for $p\in\Upsilon_t$ and $t=D_{\Gamma_0}(p)$, it follows that
$$
\left| \Div\mathbf{N}(p)+\frac{\mathbf{x}(p)}{2}\cdot\mathbf{N}(p)+\left(|A_{\Gamma_0}|^2-\frac{1}{2}\right)D_{\Gamma_0}(p) \right|  \leq \bar{C}_1 D_{\Gamma_0}(p)^2.
$$
The result follows by enlarging $C_1$ so that $C_1>\max\set{\bar{C}_1,\bar{C}_0^\prime(\bar{C}_1+C_{\Gamma_0,3}^2+1)}$.
\end{proof}

Using the vector field of Proposition \ref{FoliationProp} we obtain a two-sided estimate on the functional $E$ for weights near infinity.

\begin{prop}\label{CalibrationProp}
There is a constant $C_2=C_2(\Omega^\prime, \Gamma_0)>0$ so that if $\psi\in Lip(\overline{\Omega^\prime})$ satisfies $\Vert\psi\Vert_{Lip}\leq 1$ and $\psi\geq 0$, then, for any $R_0<\frac{1}{2}R_1<R_1-\delta<R_1<R_2$, 
$$
-C_2 R_1^{-1}\leq E[\Gamma, \Gamma_0; \alpha_{R_1, R_2, \delta} \psi]\leq E[\Gamma, \Gamma_0; \alpha_{R_1, R_2, \delta}]+C_2 R_1^{-1}.
$$
Here $R_0$ is the constant given by Proposition \ref{FoliationProp}.
\end{prop}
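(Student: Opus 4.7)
The plan is to use the unit vector field $\mathbf{N}$ from Proposition \ref{FoliationProp} as an almost-calibration, and then convert the resulting defect into a bulk integral via Lemma \ref{DivergenceLem}. Concretely, set $\mathbf{Y}=\psi\mathbf{N}$ on $\overline{\Omega^\prime}\setminus\bar{B}_{R_0}$ and extend by any fixed Lipschitz extension to all of $\overline{\Omega^\prime}$. Because $\mathrm{supp}(\alpha_{R_1,R_2,\delta})\subset \bar{A}_{R_1-\delta,R_2+\delta}$ and the hypothesis $R_1-\delta>R_0$ places this support in the region where $\mathbf{N}$ is defined, the choice of extension does not affect any of the relevant integrals.

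For the lower bound, I first verify the hypotheses of Lemma \ref{DivergenceLem} with $\gamma_0=0$. The Leibniz rule gives
\[
\Div(\psi\mathbf{N})+\tfrac{\mathbf{x}}{2}\cdot(\psi\mathbf{N})=\psi\bigl(\Div\mathbf{N}+\tfrac{\mathbf{x}}{2}\cdot\mathbf{N}\bigr)+\nabla\psi\cdot\mathbf{N},
\]
and the exponential decay in Item (4) of Proposition \ref{FoliationProp}, together with $|\psi|\leq 1$ and $|\nabla\psi|\leq 1$ implied by $\Vert\psi\Vert_{Lip}\leq 1$, bounds this expression by a constant $M_0=M_0(\Omega^\prime,\Gamma_0)$; analogously $|\mathbf{x}\cdot\mathbf{Y}|\leq |\mathbf{x}\cdot\mathbf{N}|\leq C_1|\mathbf{x}|^{-1}$ by Item (3). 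Lemma \ref{DivergenceLem} then yields $|E[\Gamma,\Gamma_0;\alpha_{R_1,R_2,\delta}\psi_{\mathbf{Y}}]|\leq C_0 M_0 R_1^{-1}$. The calibration inequality now follows from $|\mathbf{N}|=1$ and $\psi\geq 0$, which give $\psi_{\mathbf{Y}}(p,\mathbf{n}_\Gamma)=\psi(p)\,\mathbf{N}(p)\cdot\mathbf{n}_\Gamma(p)\leq\psi(p)$ pointwise on $\Gamma$, while $\mathbf{N}|_{\Gamma_0}=\mathbf{n}_{\Gamma_0}$ gives the equality $\psi_{\mathbf{Y}}(p,\mathbf{n}_{\Gamma_0})=\psi(p)$ on $\Gamma_0$. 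Substituting into the definition of $E$ yields $E[\Gamma,\Gamma_0;\alpha_{R_1,R_2,\delta}\psi_{\mathbf{Y}}]\leq E[\Gamma,\Gamma_0;\alpha_{R_1,R_2,\delta}\psi]$, hence the lower bound with $C_2\geq C_0 M_0$.

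For the upper bound I would run the same argument with $\tilde\psi=1-\psi$ in place of $\psi$: since $0\leq \psi\leq 1$ and $|\nabla\tilde\psi|=|\nabla\psi|\leq 1$, one has $\tilde\psi\geq 0$ and $\Vert\tilde\psi\Vert_{Lip}\leq 2$, so the divergence estimate goes through with at most a factor of two in the constant. This produces a lower bound for $E[\Gamma,\Gamma_0;\alpha_{R_1,R_2,\delta}(1-\psi)]=E[\Gamma,\Gamma_0;\alpha_{R_1,R_2,\delta}]-E[\Gamma,\Gamma_0;\alpha_{R_1,R_2,\delta}\psi]$, which rearranges to the upper bound once $C_2$ is enlarged to absorb the factor. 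The only real obstacle is verifying hypotheses (1) and (2) of Lemma \ref{DivergenceLem}, which is made routine by the sharp decay in Item (4) of Proposition \ref{FoliationProp}; in particular, it is precisely the balance between the weight $e^{|\mathbf{x}|^2/4}$ appearing inside the lemma and the exponential decay of $\Div\mathbf{N}+\tfrac{\mathbf{x}}{2}\cdot\mathbf{N}$ that produces the sharp $R_1^{-1}$ error.
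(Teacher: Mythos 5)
Your proof is correct and follows essentially the same route as the paper: you take $\mathbf{Y}=\psi\mathbf{N}$, verify the hypotheses of Lemma \ref{DivergenceLem} with $\gamma_0=0$ using Items (3) and (4) of Proposition \ref{FoliationProp}, use the pointwise calibration inequality $\psi\,\mathbf{N}\cdot\mathbf{n}_\Gamma\leq\psi$ together with $\mathbf{N}|_{\Gamma_0}=\mathbf{n}_{\Gamma_0}$ for the lower bound, and then deduce the upper bound by applying the lower bound to $\tilde\psi=1-\psi$. The only slight difference is that the paper presents the reduction of the upper bound to the lower bound first, and your careful note about the Lipschitz norm of $1-\psi$ at most doubling is a welcome clarification.
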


\begin{proof}
We first observe that the upper bound on $E[\Gamma, \Gamma_0; \alpha_{R_1, R_2, \delta} \psi]$ follows from the lower bound. Indeed, if $\tilde{\psi}=1-\psi$, then $\tilde{\psi}$ satisfies the same hypotheses as $\psi$ and so, assuming the lower bound holds,
$$
-C_2 R_1^{-1}\leq E[\Gamma, \Gamma_0; \alpha_{R_1, R_2, \delta} \tilde{\psi}]= E[\Gamma, \Gamma_0; \alpha_{R_1, R_2, \delta} (1-\psi)].
$$
Hence, one has that
$$
-C_2 R_1^{-1}+E[\Gamma, \Gamma_0; \alpha_{R_1, R_2, \delta} {\psi}]\leq E[\Gamma, \Gamma_0; \alpha_{R_1, R_2, \delta}],
$$
proving the upper bound.
	
In order to prove the lower bound, set $\mathbf{Y}=\psi \mathbf{N}$ where $\mathbf{N}$ is given by Proposition \ref{FoliationProp}. One computes that
$$
\Div\mathbf{Y}+\frac{\mathbf{x}}{2}\cdot \mathbf{Y}= \nabla\psi \cdot \mathbf{N}+\psi\left(\Div\mathbf{N}+\frac{\mathbf{x}}{2}\cdot \mathbf{N}\right).
$$
Thus, Proposition \ref{FoliationProp} and the assumptions on $\psi$ imply that, for $p\in\overline{\Omega^\prime}\setminus\bar{B}_{R_0}$,
$$
\left|\Div\mathbf{Y}(p)+\frac{\mathbf{x}(p)}{2}\cdot \mathbf{Y}(p)\right| \leq C_1+1.
$$
Likewise,
$$
|\mathbf{x}(p)\cdot \mathbf{Y}(p)|=\psi(p) |\mathbf{x}(p)\cdot \mathbf{N}(p)|\leq C_1 |\mathbf{x}(p)|^{-1}.
$$
Hence, as $R_0<R_1-\delta$,  appealing to Lemma \ref{DivergenceLem} gives
$$
\int_{\Gamma} \alpha_{R_1,R_2,\delta} \psi \mathbf{N}\cdot \mathbf{n}_{\Gamma} e^{\frac{|\mathbf{x}|^2}{4}} \, d\mathcal{H}^n \geq \int_{\Gamma_0} \alpha_{R_1,R_2,\delta} \psi e^{\frac{|\mathbf{x}|^2}{4}} \, d\mathcal{H}^n-C_0 (C_1+1) R_1^{-1}.
$$
However, as $\psi\geq 0$,  $\psi \mathbf{N}\cdot \mathbf{n}_{\Gamma}\leq \psi$ and so 
$$
\int_{\Gamma} \alpha_{R_1,R_2,\delta} \psi e^{\frac{|\mathbf{x}|^2}{4}} \, d\mathcal{H}^n \geq \int_{\Gamma_0} \alpha_{R_1,R_2,\delta} \psi e^{\frac{|\mathbf{x}|^2}{4}} \, d\mathcal{H}^n-C_0 (C_1+1) R_1^{-1}.
$$
That is,
$$
E[\Gamma, \Gamma_0; \alpha_{R_1,R_2,\delta} \psi ]\geq-C_2 R_1^{-1}
$$
for $C_2=C_0(C_1+1)$.
\end{proof}

We may now prove Theorem \ref{RelEntropyThm}.

\begin{proof}[Proof of Theorem \ref{RelEntropyThm}]
By the dominated convergence theorem,
$$
E_{rel}[\Gamma, \Gamma_0; \bar{B}_R]=\lim_{\delta\to 0} E[\Gamma, \Gamma_0; \phi_{R, \delta}].
$$
Proposition \ref{CalibrationProp} implies that, for any $R_2>R_1+\delta>R_1>2R_0$,
\begin{align*}
E[\Gamma, \Gamma_0; \phi_{R_2, \delta}] & =E[\Gamma, \Gamma_0; \phi_{R_1, \delta}]+E[\Gamma, \Gamma_0; \alpha_{R_1+\delta, R_2, \delta}] \\
& \geq E[\Gamma, \Gamma_0; \phi_{R_1, \delta}] -C_2 (R_1+\delta)^{-1}.
\end{align*}
The first claim follows by sending $\delta\to 0$. This implies that
$$
\liminf_{R\to \infty}E_{rel}[\Gamma, \Gamma_0; \bar{B}_R]\geq \limsup_{R\to \infty} E_{rel}[\Gamma, \Gamma_0; \bar{B}_R]
$$ 
so the limit exists. Finally, the first estimate implies the second by taking $R_2\to \infty$. 
\end{proof}

\section{Weighted relative entropy} \label{GeneralCalibrationSec}
We continue to follow the conventions of Sections \ref{Conventions} and \ref{CalibrationSec} and assume $\Gamma=\partial^* U$ for some $U\in\mathcal{C}(\Gamma_0^\prime,\Gamma_1^\prime)$. In this section we prove the generalization of Theorem \ref{WeightEstThm} to the weak setting.

\begin{thm}\label{WeightRelThm}
If $E_{rel}[\Gamma, \Gamma_0]<\infty$, then, for any $\psi \in \mathfrak{X}^e(\overline{\Omega^\prime})$, $E_{rel}[\Gamma, \Gamma_0; \psi]$ exists. Moreover, there is a constant $C_9=C_9(\Omega^\prime, \Gamma_0)>0$ so that, for all $\psi\in \mathfrak{X}^e(\overline{\Omega^\prime})$,
\begin{align*}
\left| E_{rel}[\Gamma, \Gamma_0; \psi] \right| \leq & C_9(1+ |E_{rel}[\Gamma, \Gamma_0]|)\Vert \psi \Vert_{\mathfrak{X}}.
\end{align*}	
\end{thm}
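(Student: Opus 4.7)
The plan is to establish the quantitative estimate
$$
\bigl|E[\Gamma, \Gamma_0; \alpha \psi]\bigr| \leq C \|\psi\|_\mathfrak{X} \bigl(|E[\Gamma, \Gamma_0; \alpha]| + R_1^{-1}\bigr)
$$
for $\alpha = \alpha_{R_1, R_2, \delta}$ with $R_1 > 2 R_0$, and derive both claims from it. For existence, letting $R_1, R_2 \to \infty$ the right-hand side tends to zero since $|E[\Gamma, \Gamma_0;\alpha_{R_1,R_2,\delta}]| \to 0$ by the hypothesis and Theorem \ref{RelEntropyThm}, giving the Cauchy property. The $\mathfrak{X}$-norm bound follows by splitting off the contribution from $\bar{B}_{R_0}$ (controlled by $\|\psi\|_\mathfrak{X}(1 + |E_{rel}[\Gamma, \Gamma_0]|)$ via the scalar theorem applied to $\mathcal{H}^n(\Gamma\cap \bar{B}_{R_0}) \cdot e^{R_0^2/4}$) and applying the above to the tail.

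To prove the estimate I will invoke Lemma \ref{DivergenceLem} with two vector fields. The first, $\mathbf{Y}(p) = \psi(p, \mathbf{N}(p))\,\mathbf{N}(p)$, satisfies the hypotheses with $\gamma_0 = 0$ and $M_0 = C\|\psi\|_\mathfrak{X}$ via Proposition \ref{FoliationProp}; since $\mathbf{N}|_{\Gamma_0} = \mathbf{n}_{\Gamma_0}$ it gives
$$
\int_\Gamma \alpha\, \psi(p, \mathbf{N})\, s \, d\mu_\Gamma = \int_{\Gamma_0} \alpha\, \psi\, d\mu_{\Gamma_0} + O(\|\psi\|_\mathfrak{X} R_1^{-1}),
$$
with $s(p) = \mathbf{N}(p) \cdot \mathbf{n}_\Gamma(p)$ and $d\mu_\Sigma = e^{|\mathbf{x}|^2/4} d\mathcal{H}^n|_\Sigma$. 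The second field, $\mathbf{Z}(p) = \nabla_{\mathbb{S}^n}\psi(p, \mathbf{N}(p))$, lies pointwise in $\mathbf{N}^\perp$ and satisfies $|\mathbf{Z}| \leq \|\psi\|_\mathfrak{X}/(1+|\mathbf{x}|)$. It also meets Lemma \ref{DivergenceLem}'s hypotheses with $\gamma_0 = 0$ (using $\|\nabla_{\mathbb{S}^n}\psi\|_{Lip}\leq \|\psi\|_\mathfrak{X}$) and, since $\mathbf{Z}\cdot\mathbf{n}_{\Gamma_0}=0$, yields
$$
\int_\Gamma \alpha\, \nabla_{\mathbb{S}^n}\psi(p, \mathbf{N}) \cdot \mathbf{n}_\Gamma\, d\mu_\Gamma = O(\|\psi\|_\mathfrak{X} R_1^{-1}).
$$
Specializing the first calibration to $\psi\equiv 1$ also yields the key \emph{tilt excess} estimate $\int_\Gamma \alpha(1-s)\, d\mu_\Gamma = E[\Gamma, \Gamma_0; \alpha] + O(R_1^{-1})$.

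Now I decompose
$$
\psi(p, \mathbf{n}_\Gamma) - \psi(p, \mathbf{N})\, s = G(p) + \psi(p, \mathbf{N})(1-s), \quad G(p) := \psi(p, \mathbf{n}_\Gamma) - \psi(p, \mathbf{N}).
$$
The $(1-s)$ piece is immediately bounded by $\|\psi\|_\infty (|E(\alpha)| + CR_1^{-1})$ via the tilt excess. For the $G$-integral I partition $\Gamma$ by $s$. On $\{s < 1/2\}$, Chebyshev against the tilt excess gives $(\alpha\, d\mu_\Gamma)(\{s < 1/2\}) \leq C(|E(\alpha)| + R_1^{-1})$, while evenness of $\psi$ together with $|\nabla_{\mathbb{S}^n}\psi| \leq \|\psi\|_\mathfrak{X}/(1+|\mathbf{x}|)$ gives $|G| \leq C\|\psi\|_\mathfrak{X}/R_1$ on $\mathrm{supp}(\alpha)$; the product is $O(\|\psi\|_\mathfrak{X}(|E(\alpha)|+R_1^{-1}))$. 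On $\{s \geq 1/2\}$, I Taylor-expand $\psi(p,\cdot)$ at $\mathbf{N}$: writing $\mathbf{n}_\Gamma = \exp_\mathbf{N}(\mathbf{w})$ with $|\mathbf{w}|^2 \leq C(1-s)$, and using $\|\nabla^2_{\mathbb{S}^n}\psi\|_\infty \leq \|\psi\|_\mathfrak{X}$,
$$
G(p) = c_+(s)\, \nabla_{\mathbb{S}^n}\psi(p,\mathbf{N}) \cdot \mathbf{n}_{\Gamma, \perp} + O\bigl(\|\psi\|_\mathfrak{X}(1-s)\bigr),
$$
where $c_+(s) = \arccos(s)/\sqrt{1-s^2}$ is smooth on $[1/2,1]$ with $c_+(1)=1$, hence $|c_+(s) - 1| \leq C(1-s)$. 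The quadratic remainder integrates to $O(\|\psi\|_\mathfrak{X}(|E(\alpha)| + R_1^{-1}))$. Splitting $c_+(s) = 1 + (c_+(s) - 1)$, the $(c_+ - 1)$-contribution absorbs a $(1-s)$ factor and the $R_1^{-1}$ decay of $|\nabla_{\mathbb{S}^n}\psi|$; the remaining $\nabla_{\mathbb{S}^n}\psi \cdot \mathbf{n}_{\Gamma, \perp} = \nabla_{\mathbb{S}^n}\psi\cdot\mathbf{n}_\Gamma$ piece is controlled by the $\mathbf{Z}$-calibration above, after subtracting its contribution on $\{s<1/2\}$ (which is $O(\|\psi\|_\mathfrak{X}/R_1 \cdot (|E(\alpha)|+R_1^{-1}))$ by the previous Chebyshev estimate).

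The hardest part is the bookkeeping on the ``good set'' $\{s \geq 1/2\}$: reducing $G$ to its first-order Taylor term, absorbing the non-constancy of $c_+(s)$ into the tilt excess, and combining the $\mathbf{Z}$-calibration with a Chebyshev correction — all while exploiting the $(1+|\mathbf{x}|)$-decay in $\|\cdot\|_\mathfrak{X}$ to generate the $R_1^{-1}$ factor that compensates for the exponentially large weighted mass of $\Gamma$ near infinity. Everything else, including the passage from the Lipschitz tail estimate to the two conclusions of the theorem, is standard.
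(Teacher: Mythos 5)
Your proposal is correct, and the strategy is genuinely different from the paper's, although it rests on the same three ingredients: the normal‐extension field $\mathbf{N}$ of Proposition~\ref{FoliationProp}, the divergence Lemma~\ref{DivergenceLem}, and a weighted tilt‐excess bound (the $(1-s)$ estimate you extract by taking $\psi\equiv 1$, which is precisely what the paper records as Proposition~\ref{GradEstProp}). The paper's route to the annular estimate goes through quadratic weights $\psi_{\mathbf{Y}_1,\mathbf{Y}_2}$: it first proves a one‐sided bound for rank‐one forms via Young's inequality (Lemma~\ref{QuadFormPrepLem}), upgrades to a two‐sided bound for general rank‐$\le 2$ forms by an iterated quasi‐triangle inequality and the polarization identity (Lemma~\ref{LemQuadEstLem}), and then subtracts from $\psi$ the quadratic approximation $(\mathbf{Z}_\psi\cdot\mathbf{v})(\mathbf{N}\cdot\mathbf{v})$, using evenness to get the pointwise bound $|\bar{\psi}(p,\mathbf{v})-\bar{\psi}(p,\mathbf{N})|\leq C(1-(\mathbf{N}\cdot\mathbf{v})^2)\|\psi\|_{\mathfrak{X}}$ (Lemma~\ref{PsiLem}, Item~3). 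You instead calibrate directly with the two linear fields $\mathbf{Y}=\psi(p,\mathbf{N})\mathbf{N}$ and $\mathbf{Z}=\nabla_{\mathbb{S}^n}\psi(p,\mathbf{N})$, write the error as a linear Taylor term $c_+(s)\,\mathbf{Z}\cdot\mathbf{n}_\Gamma$ plus a tilt‐excess remainder, and split $\Gamma$ into the good set $\{s\geq 1/2\}$ (where the Taylor expansion with the explicit factor $c_+(s)=\arccos(s)/\sqrt{1-s^2}$ applies) and the bad set $\{s<1/2\}$ (where Chebyshev against the tilt excess controls the mass and the $(1+|\mathbf{x}|)^{-1}$ decay controls the integrand). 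Your route is more direct and never needs polarization or the quasi‐triangle iteration; what it costs is a bit more delicate bookkeeping around $c_+(s)$ and the good/bad partition. In both arguments the crucial mechanism is identical: the $(1+|\mathbf{x}|)^{-1}$ decay built into the $\mathfrak{X}$‐norm of $\nabla_{\mathbb{S}^n}\psi$ generates the compensating $R_1^{-1}$ factor, and the tilt excess absorbs anything quadratic in $1-\mathbf{N}\cdot\mathbf{n}_\Gamma$. The passage from the annular estimate to the two conclusions of the theorem (Cauchy property for existence, mass bound on $\bar{B}_{R_0}$ for the $\mathfrak{X}$‐norm estimate) is standard and matches the paper's Corollary~\ref{EstimateAtInfinityCor} and Proposition~\ref{CalibrationMainProp}. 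One small remark: on $\{s<1/2\}$ you invoke evenness, but in fact the plain gradient bound already gives $|G|\leq\pi\|\psi\|_{\mathfrak{X}}/(1+|\mathbf{x}|)$ without it; evenness is needed in the paper to make $\bar{\psi}$ even so that the geodesic distance can be taken $\le\pi/2$, and more fundamentally to make $E[\Gamma,\Gamma_0;\psi]$ independent of the orientation choices — your argument inherits this well‐definedness but does not seem to exploit evenness analytically.
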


The proof of Theorem \ref{WeightRelThm} will proceed in a similar fashion to the arguments of the previous section.  In particular,  we will also use the divergence theorem, though in a more involved way.  Our first goal is to prove Theorem \ref{WeightRelThm} for weights that are of a particularly simple form -- namely modeled on a (continuously varying) quadratic form of rank at most two.  Such forms will provide good approximations to elements of $\mathfrak{X}^e$.  Here the \emph{rank} of a quadratic form $Q_A$ on $\Real^{n+1}$ is the rank of the symmetric matrix $A$ so that $Q_A(\mathbf{v})=\mathbf{v}\cdot (A\mathbf{v})$. The reason why quadratic forms of rank $2$ are relevant is that if $(\mathbf{v}, \mathbf{w})\in T_{\mathbf{v}}\mathbb{S}^n$ and $A=\mathbf{v}\mathbf{w}^\top+\mathbf{w}\mathbf{v}^\top$, then $\nabla_{\mathbb{S}^n} Q_A(\mathbf{v})=\mathbf{w}$ and $Q_A$ is the simplest even function for which this holds. 

With this in mind, for continuous vector fields $\mathbf{Y}_1,\mathbf{Y}_2$ defined on a subset $W$ of $\Real^{n+1}$, define the function $\psi_{\mathbf{Y}_1,\mathbf{Y}_2}\in C^0_{loc}(W\times\mathbb{S}^n)$ by
$$
\psi_{\mathbf{Y}_1,\mathbf{Y}_2}(p,\mathbf{v})=\psi_{\mathbf{Y}_1}(p,\mathbf{v})\psi_{\mathbf{Y}_2}(p,\mathbf{v})=(\mathbf{Y}_1(p)\cdot \mathbf{v})(\mathbf{Y}_2(p)\cdot \mathbf{v}).
$$
We first establish lower bound estimates  and a quasi-triangle inequality near infinity for rank-one quadratic forms.

\begin{lem}\label{QuadFormPrepLem}
There is a constant $C_3=C_3(\Omega^\prime, \Gamma_0)>0$ so that if $\mathbf{Y}\in Lip(\overline{\Omega^\prime}\setminus\bar{B}_{R_0};\mathbb{R}^{n+1})$ is a vector field of the form
$$
\mathbf{Y}=a\mathbf{N}+\mathbf{Z}
$$
where $|a|\leq 1$ and 
$$
\Vert |\mathbf{x}| \mathbf{Z}\Vert_{C^0}+\Vert\nabla\mathbf{Z}\Vert_{L^\infty} \leq 1,
$$
then, for any $R_0<\frac{1}{2} R_1<R_1-\delta<R_1<R_2$,
$$
E[\Gamma,\Gamma_0; \alpha_{R_1,R_2,\delta}\psi_{\mathbf{Y},\mathbf{Y}}] \geq -C_3 |E[\Gamma,\Gamma_0; \alpha_{R_1,R_2,\delta}]|-C_3 R_1^{-1}.
$$
As a consequence, if $\mathbf{Y}_i\in Lip(\overline{\Omega^\prime}\setminus\bar{B}_{R_0}; \Real^{n+1})$, $i\in\set{1, \ldots m}$, are vector fields of the form
$$
\mathbf{Y}_i=a_i \mathbf{N}+ \mathbf{Z}_i
$$
where $|a_i|\leq 1$ and
$$
\Vert |\mathbf{x}| \mathbf{Z}_i\Vert_{C^0}+\Vert\nabla\mathbf{Z}_i\Vert_{L^\infty} \leq 1,
$$
and $\mathbf{W}=\sum_{i=1}^m \mathbf{Y}_i$, then
\begin{align*}
E[\Gamma, \Gamma_0; \alpha_{R_1, R_2, \delta} \psi_{\mathbf{W}, \mathbf{W}}]& \leq 2^m \sum_{i=1}^m E[\Gamma, \Gamma_0; \alpha_{R_1, R_2, \delta}\psi_{\mathbf{Y}_i, \mathbf{Y}_i}] \\
&+ m^3 2^m C_3\left| E[\Gamma, \Gamma_0; \alpha_{R_1, R_2, \delta}]\right|+ m^3 2^m C_3 R_1^{-1}.
\end{align*}
Here $R_0$ is the constant and $\mathbf{N}$ is the vector field given by Proposition \ref{FoliationProp}.
\end{lem}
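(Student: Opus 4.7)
The plan for the first estimate is to exploit the pointwise inequality $(\mathbf{Y}\cdot\mathbf{v})^2\geq 2(\mathbf{Y}\cdot\mathbf{N})(\mathbf{Y}\cdot\mathbf{v})-(\mathbf{Y}\cdot\mathbf{N})^2$ (a restatement of $(A-B)^2\geq 0$ with $A=\mathbf{Y}\cdot\mathbf{v}$ and $B=\mathbf{Y}\cdot\mathbf{N}$). I would introduce the auxiliary Lipschitz vector field $\mathbf{V}(p)=(\mathbf{Y}(p)\cdot\mathbf{N}(p))\mathbf{Y}(p)$ on $\overline{\Omega^\prime}\setminus\bar{B}_{R_0}$ (extended arbitrarily in a Lipschitz manner across $\bar{B}_{R_0}$, which is harmless since $\spt(\alpha_{R_1,R_2,\delta})\subset\{|\mathbf{x}|\geq R_1-\delta>R_0\}$), so that the cross term becomes the linear-in-$\mathbf{v}$ flux $2\mathbf{V}(p)\cdot\mathbf{v}$, which is handled by Lemma~\ref{DivergenceLem}.

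The next step is to check the hypotheses of Lemma~\ref{DivergenceLem} for $\mathbf{V}$ with $\gamma_0=0$. From $\Div\mathbf{V}+\tfrac{\mathbf{x}}{2}\cdot\mathbf{V}=\nabla(\mathbf{Y}\cdot\mathbf{N})\cdot\mathbf{Y}+(\mathbf{Y}\cdot\mathbf{N})(\Div\mathbf{Y}+\tfrac{\mathbf{x}}{2}\cdot\mathbf{Y})$ and $\mathbf{x}\cdot\mathbf{V}=(\mathbf{Y}\cdot\mathbf{N})(\mathbf{x}\cdot\mathbf{Y})$, together with Proposition~\ref{FoliationProp} ($|\mathbf{N}|=1$, $|\mathbf{x}\cdot\mathbf{N}|+\sum_{i=1}^3|\nabla^i\mathbf{N}|\leq C_1|\mathbf{x}|^{-1}$, $|\Div\mathbf{N}+\tfrac{\mathbf{x}}{2}\cdot\mathbf{N}|\leq C_1|\mathbf{x}|^{-n-1}e^{-|\mathbf{x}|^2/4}$) and the assumptions $|a|\leq 1$, $|\mathbf{Z}|\leq|\mathbf{x}|^{-1}$, $|\nabla\mathbf{Z}|\leq 1$, a direct computation shows both quantities are bounded by a constant $M_0=M_0(\Omega^\prime,\Gamma_0)$ on $\spt(\alpha_{R_1,R_2,\delta})$. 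Lemma~\ref{DivergenceLem} then gives $|E[\Gamma,\Gamma_0;\alpha_{R_1,R_2,\delta}\psi_{\mathbf{V}}]|\leq C_0 M_0 R_1^{-1}$. Because $\mathbf{N}|_{\Gamma_0}=\mathbf{n}_{\Gamma_0}$, one has $\mathbf{V}\cdot\mathbf{n}_{\Gamma_0}|_{\Gamma_0}=(\mathbf{Y}\cdot\mathbf{n}_{\Gamma_0})^2=(\mathbf{Y}\cdot\mathbf{N})^2|_{\Gamma_0}$. Integrating the pointwise inequality against $\alpha_{R_1,R_2,\delta}e^{|\mathbf{x}|^2/4}\,d\mathcal{H}^n$ on $\Gamma$ and rearranging yields
$$
E[\Gamma,\Gamma_0;\alpha_{R_1,R_2,\delta}\psi_{\mathbf{Y},\mathbf{Y}}]\geq -E[\Gamma,\Gamma_0;\alpha_{R_1,R_2,\delta}(\mathbf{Y}\cdot\mathbf{N})^2]-2C_0 M_0 R_1^{-1}.
$$
Since $(\mathbf{Y}\cdot\mathbf{N})^2$ is a non-negative scalar function of $p$ whose Lipschitz norm is bounded by a constant $K=K(\Omega^\prime,\Gamma_0)$, applying Proposition~\ref{CalibrationProp} to $(\mathbf{Y}\cdot\mathbf{N})^2/K$ gives $E[\Gamma,\Gamma_0;\alpha_{R_1,R_2,\delta}(\mathbf{Y}\cdot\mathbf{N})^2]\leq K|E[\Gamma,\Gamma_0;\alpha_{R_1,R_2,\delta}]|+KC_2 R_1^{-1}$, which combined with the previous display and an appropriate choice of $C_3$ completes the first estimate.

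For the second estimate, the plan is to use the algebraic identity $(\sum_i a_i)^2=m\sum_i a_i^2-\sum_{i<j}(a_i-a_j)^2$ pointwise with $a_i=\mathbf{Y}_i(p)\cdot\mathbf{v}$, producing
$$
\psi_{\mathbf{W},\mathbf{W}}=m\sum_{i=1}^m\psi_{\mathbf{Y}_i,\mathbf{Y}_i}-\sum_{i<j}\psi_{\mathbf{Y}_i-\mathbf{Y}_j,\mathbf{Y}_i-\mathbf{Y}_j}.
$$
Each $(\mathbf{Y}_i-\mathbf{Y}_j)/2=\tfrac{1}{2}(a_i-a_j)\mathbf{N}+\tfrac{1}{2}(\mathbf{Z}_i-\mathbf{Z}_j)$ satisfies the hypothesis of the first estimate (the triangle inequality preserves both $|a|\leq 1$ and $\||\mathbf{x}|\mathbf{Z}\|_{C^0}+\|\nabla\mathbf{Z}\|_{L^\infty}\leq 1$ under averaging), so applying the first estimate and rescaling by $4$ gives $E[\Gamma,\Gamma_0;\alpha_{R_1,R_2,\delta}\psi_{\mathbf{Y}_i-\mathbf{Y}_j,\mathbf{Y}_i-\mathbf{Y}_j}]\geq -4C_3|E[\Gamma,\Gamma_0;\alpha_{R_1,R_2,\delta}]|-4C_3 R_1^{-1}$. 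Substituting into the identity produces the intermediate bound
$$
E[\Gamma,\Gamma_0;\alpha_{R_1,R_2,\delta}\psi_{\mathbf{W},\mathbf{W}}]\leq m\sum_{i=1}^m E[\Gamma,\Gamma_0;\alpha_{R_1,R_2,\delta}\psi_{\mathbf{Y}_i,\mathbf{Y}_i}]+4\tbinom{m}{2}C_3(|E[\Gamma,\Gamma_0;\alpha_{R_1,R_2,\delta}]|+R_1^{-1}).
$$
When $\sum_i E[\Gamma,\Gamma_0;\alpha_{R_1,R_2,\delta}\psi_{\mathbf{Y}_i,\mathbf{Y}_i}]\geq 0$ the passage from $m$ to $2^m$ is immediate; otherwise one applies the first estimate's termwise lower bound $E[\Gamma,\Gamma_0;\alpha_{R_1,R_2,\delta}\psi_{\mathbf{Y}_i,\mathbf{Y}_i}]\geq -C_3(|E[\Gamma,\Gamma_0;\alpha_{R_1,R_2,\delta}]|+R_1^{-1})$ to absorb the gap $(2^m-m)|\sum_i E[\cdots]|\leq m(2^m-m)C_3(|E[\cdots]|+R_1^{-1})$ into the $m^3 2^m C_3$ error term (the arithmetic $m(2^m-m)+4\tbinom{m}{2}\leq m^3 2^m$ is elementary for $m\geq 1$). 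The main technical obstacle is in the first estimate: identifying the vector field $\mathbf{V}=(\mathbf{Y}\cdot\mathbf{N})\mathbf{Y}$ whose flux through $\Gamma_0$ exactly reproduces $(\mathbf{Y}\cdot\mathbf{n}_{\Gamma_0})^2$, after which the quadratic structure in $\mathbf{v}$ reduces to the scalar problem already handled by Proposition~\ref{CalibrationProp}.
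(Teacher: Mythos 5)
Your first estimate is proved by essentially the paper's argument: define $\mathbf{V}=(\mathbf{Y}\cdot\mathbf{N})\mathbf{Y}$, check that Lemma~\ref{DivergenceLem} applies (with $\gamma_0=0$), use $\mathbf{N}|_{\Gamma_0}=\mathbf{n}_{\Gamma_0}$ to identify the flux through $\Gamma_0$ with $(\mathbf{Y}\cdot\mathbf{n}_{\Gamma_0})^2$, invoke the square-completion/Young inequality to relate $(\mathbf{Y}\cdot\mathbf{n}_\Gamma)^2$ to $\mathbf{V}\cdot\mathbf{n}_\Gamma$ and $(\mathbf{Y}\cdot\mathbf{N})^2$, and finally apply Proposition~\ref{CalibrationProp} to the non-negative scalar $(\mathbf{Y}\cdot\mathbf{N})^2$. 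This is the paper's proof with the Young inequality $\tfrac12 A^2+\tfrac12 B^2\geq AB$ restated as $(A-B)^2\geq 0$.

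Your second estimate takes a genuinely different route. The paper iterates the parallelogram identity $\psi_{\mathbf{W}_k,\mathbf{W}_k}+\psi_{\bar{\mathbf{W}}_k,\bar{\mathbf{W}}_k}=2\psi_{\mathbf{W}_{k-1},\mathbf{W}_{k-1}}+2\psi_{\mathbf{Y}_k,\mathbf{Y}_k}$, applying the first estimate to the rescaled $\bar{\mathbf{W}}_k/k$ at each step; the recursion is what produces the $2^m$ and $m^3 2^m$ factors. You instead use the single algebraic identity $\big(\sum_i a_i\big)^2=m\sum_i a_i^2-\sum_{i<j}(a_i-a_j)^2$, apply the first estimate to the averaged differences $(\mathbf{Y}_i-\mathbf{Y}_j)/2$ (which, as you correctly verify, satisfy the hypotheses by convexity of the norm constraints), and obtain the sharper intermediate bound with coefficient $m$ and error $4\binom{m}{2}C_3(\lvert E\rvert+R_1^{-1})$. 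Your final passage from $m$ to $2^m$ via the termwise lower bound $E[\alpha\psi_{\mathbf{Y}_i,\mathbf{Y}_i}]\geq -C_3(\lvert E\rvert+R_1^{-1})$, together with the elementary inequality $m(2^m-m)+4\binom{m}{2}\leq m^3 2^m$, is correct. Your approach yields a strictly better estimate and avoids the recursion; the paper's iterative version naturally produces the $2^m$ factor but requires carrying a $k$-dependent rescaling of $\bar{\mathbf{W}}_k$ at each step. Both are valid; yours is arguably cleaner and quantitatively sharper.
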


\begin{proof}
Set
$$
\bar{\mathbf{Y}}=(\mathbf{Y}\cdot \mathbf{N}) \mathbf{Y}.
$$
Using Proposition \ref{FoliationProp}, one computes that
$$
\left| \Div \bar{\mathbf{Y}}+\frac{\mathbf{x}}{2}\cdot \bar{\mathbf{Y}}\right| \leq c(n) (C_1+1)
$$
and
$$
|\mathbf{x}\cdot \bar{\mathbf{Y}}|\leq c(n) (C_1+1).
$$
Hence, by Lemma \ref{DivergenceLem},
$$
\int_{\Gamma} \alpha_{R_1,R_2,\delta} \bar{\mathbf{Y}}\cdot \mathbf{n}_{\Gamma} e^{\frac{|\mathbf{x}|^2}{4}} \, d\mathcal{H}^n \geq \int_{\Gamma_0} \alpha_{R_1,R_2,\delta}  \bar{\mathbf{Y}}\cdot \mathbf{n}_{\Gamma_0} e^{\frac{|\mathbf{x}|^2}{4}} \, d\mathcal{H}^n-c(n)C_0(C_1+1)R_1^{-1}.
$$
That is, as $\mathbf{N}|_{\Gamma_0}=\mathbf{n}_{\Gamma_0}$,
\begin{align*}
\int_{\Gamma} &\alpha_{R_1,R_2,\delta}(\mathbf{Y}\cdot \mathbf{N})(\mathbf{Y}\cdot \mathbf{n}_\Gamma) e^{\frac{|\mathbf{x}|^2}{4}} \, d\mathcal{H}^n \\ &\geq \int_{\Gamma_0} \alpha_{R_1,R_2,\delta}  \psi_{\mathbf{Y},\mathbf{Y}}(\cdot, \mathbf{n}_{\Gamma_0}(\cdot)) e^{\frac{|\mathbf{x}|^2}{4}} \, d\mathcal{H}^n  -c(n)C_0(C_1+1) R_1^{-1}.
\end{align*}
By Young's inequality, on $\Gamma$,
$$
 \frac{1}{2} \psi_{\mathbf{Y},\mathbf{Y}}(p,\mathbf{N}(p)) +\frac{1}{2} \psi_{\mathbf{Y},\mathbf{Y}}(p,\mathbf{n}_\Gamma(p))\geq (\mathbf{Y}(p)\cdot \mathbf{N}(p))(\mathbf{Y}(p)\cdot \mathbf{n}_{\Gamma}(p)),
$$
while, on $\Gamma_0$,
$$
\frac{1}{2} \psi_{\mathbf{Y},\mathbf{Y}}(p,\mathbf{N}(p))+\frac{1}{2} \psi_{\mathbf{Y},\mathbf{Y}}(p,\mathbf{n}_{\Gamma_0}(p))=\psi_{\mathbf{Y},\mathbf{Y}}(p, \mathbf{n}_{\Gamma_0}(p)).
$$
Setting $\phi_{\mathbf{Y}}(p)=\psi_{\mathbf{Y},\mathbf{Y}}(p, \mathbf{N}(p))$, this yields
$$
\frac{1}{2}E[\Gamma, \Gamma_0; \alpha_{R_1, R_2, \delta}\phi_{\mathbf{Y}}]+\frac{1}{2}E[\Gamma, \Gamma_0; \alpha_{R_1, R_2, \delta}\psi_{\mathbf{Y},\mathbf{Y}}]\geq -c(n) C_0(C_1+1)R_1^{-1}.
$$
By construction, $\phi_{\mathbf{Y}}\geq 0$ and
$$
\Vert \phi_{\mathbf{Y}} \Vert_{Lip} \leq c(n) (C_1+1).
$$
Hence, by Proposition \ref{CalibrationProp} and our previous remark, 
$$
E[\Gamma, \Gamma_0; \alpha_{R_1, R_2, \delta} \phi_{\mathbf{Y}}]\leq c(n) (C_1+1) E[\Gamma, \Gamma_0; \alpha_{R_1, R_2, \delta}]+c(n) (C_1+1)C_2 R_1^{-2}.
$$
As such,
$$
E[\Gamma, \Gamma_0; \alpha_{R_1, R_2, \delta}\psi_{\mathbf{Y},\mathbf{Y}}]\geq -C_3	|E[\Gamma, \Gamma_0; \alpha_{R_1, R_2, \delta}]|- C_3 R_1^{-1}
$$
as long as $C_3 \geq 2 c(n)(C_1+1) (C_2+1)$. This gives the desired lower bound.
	
To complete the proof set, for $1\leq k\leq m$,
$$
\mathbf{W}_k=\sum_{i=1}^k \mathbf{Y}_i=\mathbf{W}_{k-1}+\mathbf{Y}_k
$$
and, for $2\leq k \leq m$,
$$
\bar{\mathbf{W}}_k=\sum_{i=1}^{k-1} \mathbf{Y}_i-\mathbf{Y}_k=\mathbf{W}_{k-1}-\mathbf{Y}_k.
$$
Clearly, $\mathbf{W}_{m}=\mathbf{W}$ and
$$
\psi_{\mathbf{W}_{k}, \mathbf{W}_{k}}+ \psi_{\bar{\mathbf{W}}_{k}, \bar{\mathbf{W}}_{k}}=2\psi_{\mathbf{W}_{k-1}, \mathbf{W}_{k-1}}+2\psi_{\mathbf{Y}_{k}, \mathbf{Y}_{k}}.
$$
In particular, applying the lower bounds we already established to $\psi_{\bar{\mathbf{W}}_{k}, \bar{\mathbf{W}}_{k}}$, gives
\begin{multline*}
E  [\Gamma,\Gamma_0;\alpha_{R_1, R_2, \delta} \psi_{\mathbf{W}_{k}, \mathbf{W}_{k}}]\leq 2 E[\Gamma, \Gamma_0; \alpha_{R_1, R_2, \delta}\psi_{\mathbf{W}_{k-1}, \mathbf{W}_{k-1}}]\\
+2E[\Gamma, \Gamma_0; \alpha_{R_1, R_2, \delta}\psi_{\mathbf{Y}_{k}, \mathbf{Y}_{k}}] + k^2 C_3 |E[\Gamma, \Gamma_0; \alpha_{R_1, R_2, \delta}]| +k^2 C_3 R_1^{-1}.
\end{multline*}
Iterating this estimate gives
\begin{align*}
E[\Gamma, \Gamma_0;\alpha_{R_1, R_2, \delta} \psi_{\mathbf{W}, \mathbf{W}}] & \leq 2^m \sum_{k=1}^m E[\Gamma, \Gamma_0; \alpha_{R_1, R_2, \delta}\psi_{\mathbf{Y}_{k}, \mathbf{Y}_{k}}]\\
& +m^3 2^m C_3 \left| E[\Gamma, \Gamma_0; \alpha_{R_1, R_2, \delta}]\right| +m^3 2^m  C_3 R_1^{-1}.
\end{align*}
This verifies the second claim.
\end{proof}

Using a polarization identity  and the previous result, we establish a two-sided estimate near infinity for general quadratic forms of rank at most $2$.

\begin{lem}\label{LemQuadEstLem}
There is a constant $C_4=C_4(\Omega^\prime, \Gamma_0)>0$ so that if $\mathbf{Y}_1,\mathbf{Y}_2\in Lip(\overline{\Omega^\prime}\setminus\bar{B}_{R_0}; \Real^{n+1})$ are vector fields of the form
$$
\mathbf{Y}_i=a_i \mathbf{N}+ \mathbf{Z}_i
$$
where $|a_i|\leq 1$ and
$$
\Vert |\mathbf{x}| \mathbf{Z}_i\Vert_{C^0}+\Vert\nabla\mathbf{Z}_i\Vert_{L^\infty} \leq 1,
$$
then,  for any $R_0<\frac{1}{2}R_1<R_1-\delta<R_1<R_2$, 
$$
|E[\Gamma, \Gamma_0; \alpha_{R_1, R_2, \delta}\psi_{\mathbf{Y}_1, \mathbf{Y}_2}]| \leq C_4 |E[\Gamma, \Gamma_0; \alpha_{R_1, R_2, \delta}]|+C_4 R_1^{-1}.
$$
Here $R_0$ is the constant and $\mathbf{N}$ is the vector field given by Proposition \ref{FoliationProp}.
\end{lem}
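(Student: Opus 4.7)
The plan is to prove the stated two-sided estimate by decomposing $\mathbf n_\Gamma$ on $\Gamma$ as $\mathbf N+(\mathbf n_\Gamma-\mathbf N)$ and expanding the product $(\mathbf Y_1\cdot\mathbf n_\Gamma)(\mathbf Y_2\cdot\mathbf n_\Gamma)$ into four pieces. Because $\mathbf N|_{\Gamma_0}=\mathbf n_{\Gamma_0}$, only the diagonal piece survives on $\Gamma_0$, so one can write $E[\Gamma,\Gamma_0;\alpha\psi_{\mathbf Y_1,\mathbf Y_2}]$ as the sum of a diagonal difference, two cross terms, and a misalignment term, and bound each independently using Proposition \ref{FoliationProp}, Lemma \ref{DivergenceLem}, and Proposition \ref{CalibrationProp}.

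For the \emph{diagonal} piece, set $\phi(p)=(\mathbf Y_1\cdot\mathbf N)(\mathbf Y_2\cdot\mathbf N)$. Under the assumptions on $\mathbf Y_i$ and the $C^3$-decay of $\mathbf N$ from Proposition \ref{FoliationProp}, $\phi$ is a bounded Lipschitz function of $p$ alone on $\overline{\Omega^\prime}\setminus\bar B_{R_0}$. After shifting by a constant so it is nonnegative and rescaling, Proposition \ref{CalibrationProp} yields $|E[\Gamma,\Gamma_0;\alpha\phi]|\leq C(|E[\Gamma,\Gamma_0;\alpha]|+R_1^{-1})$.

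For each \emph{cross term}, say $\int_\Gamma \alpha(\mathbf Y_1\cdot(\mathbf n_\Gamma-\mathbf N))(\mathbf Y_2\cdot\mathbf N)e^{|\mathbf x|^2/4}\,d\mathcal H^n$, rewrite it as $\int_\Gamma\alpha\bar{\mathbf Y}\cdot\mathbf n_\Gamma e^{|\mathbf x|^2/4}\,d\mathcal H^n-\int_\Gamma\alpha\phi\, e^{|\mathbf x|^2/4}\,d\mathcal H^n$ with $\bar{\mathbf Y}=(\mathbf Y_2\cdot\mathbf N)\mathbf Y_1$. A direct computation using Proposition \ref{FoliationProp} shows $\bar{\mathbf Y}$ satisfies the hypotheses of Lemma \ref{DivergenceLem} with $\gamma_0=0$ and some constant $M_0=M_0(\Omega^\prime,\Gamma_0)$; that lemma collapses the first integral to $\int_{\Gamma_0}\alpha\phi\, e^{|\mathbf x|^2/4}\,d\mathcal H^n+O(R_1^{-1})$, while the second equals $\int_{\Gamma_0}\alpha\phi\, e^{|\mathbf x|^2/4}\,d\mathcal H^n+E[\Gamma,\Gamma_0;\alpha\phi]$. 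The cross term therefore equals $-E[\Gamma,\Gamma_0;\alpha\phi]+O(R_1^{-1})$, which by the diagonal estimate is $O(|E[\Gamma,\Gamma_0;\alpha]|+R_1^{-1})$. The other cross term is symmetric.

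The main obstacle is the \emph{misalignment term} $\int_\Gamma\alpha(\mathbf Y_1\cdot(\mathbf n_\Gamma-\mathbf N))(\mathbf Y_2\cdot(\mathbf n_\Gamma-\mathbf N))e^{|\mathbf x|^2/4}\,d\mathcal H^n$. Since $|\mathbf Y_i|\leq 2$ on the support of $\alpha$, by Cauchy--Schwarz it suffices to bound $\int_\Gamma\alpha|\mathbf n_\Gamma-\mathbf N|^2 e^{|\mathbf x|^2/4}\,d\mathcal H^n$. Using $|\mathbf n_\Gamma|=|\mathbf N|=1$, this integral equals $2\int_\Gamma\alpha(1-\mathbf n_\Gamma\cdot\mathbf N)e^{|\mathbf x|^2/4}\,d\mathcal H^n$, so I would apply Lemma \ref{DivergenceLem} to the vector field $\mathbf N$ itself; Proposition \ref{FoliationProp} supplies the decay bounds $|\Div\mathbf N+\tfrac{\mathbf x}{2}\cdot\mathbf N|\lesssim |\mathbf x|^{-n-1}e^{-|\mathbf x|^2/4}$ and $|\mathbf x\cdot\mathbf N|\lesssim|\mathbf x|^{-1}$ needed to invoke the lemma, and $\mathbf N\cdot\mathbf n_{\Gamma_0}=1$ on $\Gamma_0$ gives
\[
\int_\Gamma\alpha|\mathbf n_\Gamma-\mathbf N|^2 e^{|\mathbf x|^2/4}\,d\mathcal H^n = 2E[\Gamma,\Gamma_0;\alpha]+O(R_1^{-1}).
\]
Since the left-hand side is nonnegative, this simultaneously forces $E[\Gamma,\Gamma_0;\alpha]\geq -O(R_1^{-1})$ and gives the desired absolute-value bound $\leq 2|E[\Gamma,\Gamma_0;\alpha]|+O(R_1^{-1})$ for the misalignment term. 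Summing the four pieces and choosing $C_4=C_4(\Omega^\prime,\Gamma_0)$ sufficiently large gives the lemma.
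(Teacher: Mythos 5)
Your argument is correct, and it takes a genuinely different route from the paper's. You expand $\psi_{\mathbf Y_1,\mathbf Y_2}(p,\mathbf n_\Gamma(p))$ by writing $\mathbf n_\Gamma=\mathbf N+(\mathbf n_\Gamma-\mathbf N)$, yielding a diagonal piece $\phi=(\mathbf Y_1\cdot\mathbf N)(\mathbf Y_2\cdot\mathbf N)$ (a Lipschitz scalar weight, handled by Proposition \ref{CalibrationProp} after shifting and rescaling), two cross terms (each reduced via the vector field $\bar{\mathbf Y}=(\mathbf Y_j\cdot\mathbf N)\mathbf Y_i$ and Lemma \ref{DivergenceLem}, yielding $-E[\Gamma,\Gamma_0;\alpha\phi]+O(R_1^{-1})$), and the misalignment piece, which by Cauchy--Schwarz reduces to the weighted tilt excess $\int_\Gamma\alpha|\mathbf n_\Gamma-\mathbf N|^2 e^{|\mathbf x|^2/4}=2\int_\Gamma\alpha(1-\mathbf n_\Gamma\cdot\mathbf N)e^{|\mathbf x|^2/4}$, controlled by a direct application of Lemma \ref{DivergenceLem} to $\mathbf N$. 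That last step is, in effect, an independent rediscovery of Proposition \ref{GradEstProp}, which in the paper appears \emph{after} this lemma. The paper's own proof never decomposes $\mathbf n_\Gamma$: it first treats the case $\mathbf Y_1=\mathbf Y_2=\mathbf Y$ via the two-part Lemma \ref{QuadFormPrepLem} (a calibration-based lower bound plus a quasi-triangle inequality derived from the parallelogram law), splits $\mathbf Y=a\mathbf N+\mathbf Z$ and further decomposes $\mathbf Z=\sum z_j\mathbf e_j$ to exploit the identity $\sum_k\psi_{z_j\mathbf e_k,z_j\mathbf e_k}=z_j^2$, and then recovers the off-diagonal case by polarization. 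Your version is shorter and geometrically more transparent, at the cost of pulling the tilt-excess estimate forward; the paper's version is more algebraic, deferring tilt-excess control to where it is also needed for general $\mathfrak X^e$ weights. Both routes yield the stated two-sided bound with a constant depending only on $\Omega^\prime,\Gamma_0$.

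Two minor points worth making explicit in a polished write-up: (i) the vector fields $\bar{\mathbf Y}$ you feed into Lemma \ref{DivergenceLem} are defined only on $\overline{\Omega^\prime}\setminus\bar B_{R_0}$, so one should note that since $\spt(\alpha_{R_1,R_2,\delta})\subset\bar A_{R_1-\delta,R_2+\delta}$ with $R_1-\delta>R_0$, any Lipschitz extension of $\bar{\mathbf Y}$ inward suffices; (ii) when invoking Proposition \ref{CalibrationProp} for $\phi$, you must normalize not only to make $\phi$ nonnegative but also to bring its Lipschitz norm down to $1$, which introduces a constant $K=K(\Omega^\prime,\Gamma_0)$ multiplying both $|E[\Gamma,\Gamma_0;\alpha]|$ and $R_1^{-1}$; you allude to this but it should be carried through.
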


\begin{proof}
We first establish the bound when
$$
\mathbf{Y}_1=\mathbf{Y}_2=\mathbf{Y}=a\mathbf{N}+\mathbf{Z}.
$$
In this case, $\psi_{\mathbf{Y}_1,\mathbf{Y}_2}=\psi_{\mathbf{Y}, \mathbf{Y}}$ and so the lower bound on $E[\Gamma, \Gamma_0; \alpha_{R_1, R_2, \delta}\psi_{\mathbf{Y}_1, \mathbf{Y}_2}]$ follows from the first part of Lemma \ref{QuadFormPrepLem} as long as $C_4\geq C_3$.
	
To prove the upper bound, we use the second part of Lemma \ref{QuadFormPrepLem} to obtain
\begin{align*}
E[\Gamma, \Gamma_0; \alpha_{R_1, R_2, \delta}\psi_{\mathbf{Y}, \mathbf{Y}}] & \leq 4 E[\Gamma, \Gamma_0; \alpha_{R_1, R_2, \delta}\psi_{a\mathbf{N}, a\mathbf{N}}] +4 E[\Gamma, \Gamma_0; \alpha_{R_1, R_2, \delta}\psi_{\mathbf{Z}, \mathbf{Z}}] \\
& + 32 C_3 \left| E[\Gamma, \Gamma_0; \alpha_{R_1, R_2, \delta}]\right|+32 C_3 R_1^{-1}.
\end{align*}
As $(\mathbf{N}\cdot \mathbf{n}_\Gamma)^2\leq 1$ on $\Gamma$ while $(\mathbf{N}\cdot \mathbf{n}_{\Gamma_0})^2=1$ on $\Gamma_0$, it follows that
$$
E[\Gamma, \Gamma_0; \alpha_{R_1, R_2, \delta}\psi_{\mathbf{N}, \mathbf{N}}]\leq E[\Gamma, \Gamma_0; \alpha_{R_1, R_2, \delta}]\leq \left| E[\Gamma, \Gamma_0; \alpha_{R_1, R_2, \delta}]\right|
$$
and so, as $|a|\leq 1$, 
$$
E[\Gamma, \Gamma_0; \alpha_{R_1, R_2, \delta}\psi_{a\mathbf{N}, a\mathbf{N}}]\leq a^2 \left| E[\Gamma, \Gamma_0; \alpha_{R_1, R_2, \delta}]\right|\leq \left| E[\Gamma, \Gamma_0; \alpha_{R_1, R_2, \delta}]\right|.
$$
	
To find an upper bound on $E[\Gamma, \Gamma_0; \alpha_{R_1, R_2, \delta}\psi_{\mathbf{Z}, \mathbf{Z}}]$, write $\mathbf{Z}=\sum_{j=1}^{n+1} z_j \mathbf{e}_j$ where $\mathbf{e}_j$ is the constant vector field given by the $j$-th coordinate vector. The estimate on $\mathbf{Z}$ implies that the $z_j$ satisfy 
$$
\Vert |\mathbf{x}| z_j\Vert_{C^0}+\Vert\nabla z_j\Vert_{L^\infty} \leq 1.
$$
By the second part of Lemma \ref{QuadFormPrepLem}, 
\begin{multline*}
E[\Gamma,\Gamma_0; \alpha_{R_1, R_2, \delta} \psi_{\mathbf{Z}, \mathbf{Z}}] \leq 2^{n+1}\sum_{j=1}^{n+1}E[\Gamma, \Gamma_0; \alpha_{R_1, R_2, \delta}\psi_{z_j\mathbf{e}_j, z_j\mathbf{e}_{j}}]\\
+ (n+1)^3 2^{n+1} C_3 \left| E[\Gamma, \Gamma_0; \alpha_{R_1, R_2, \delta}]\right|+(n+1)^3 2^{n+1} C_3 R_1^{-1}.
\end{multline*}
Observe that 
$$
\sum_{k=1}^{n+1} \psi_{z_j\mathbf{e}_k,z_j\mathbf{e}_k}(p,\mathbf{v})=z_j^2(p).
$$
Hence,
$$
\sum_{k=1}^{n+1} E[\Gamma, \Gamma_0; \alpha_{R_1, R_2, \delta}\psi_{z_j\mathbf{e}_{k}, z_j\mathbf{e}_{k}}]=E[\Gamma, \Gamma_0; \alpha_{R_1, R_2, \delta}z_j^2].
$$
By the lower bound of Lemma \ref{QuadFormPrepLem}, this implies 
$$
E[\Gamma, \Gamma_0; \alpha_{R_1, R_2, \delta}\psi_{z_j\mathbf{e}_{j}, z_j\mathbf{e}_{j}}]\leq n C_3 \left|E[\Gamma, \Gamma_0; \alpha_{R_1, R_2, \delta}]\right|+n C_3 R_2^{-1} +E[\Gamma, \Gamma_0; \alpha_{R_1, R_2, \delta} z_j^2].
$$	
Appealing to Proposition \ref{CalibrationProp}, one has
$$
E[\Gamma, \Gamma_0; \alpha_{R_1, R_2, \delta}\psi_{z_j\mathbf{e}_j, z_j\mathbf{e}_j}]\leq (n C_3+1) \left|E[\Gamma, \Gamma_0; \alpha_{R_1, R_2, \delta}]\right|+(n C_3+C_2) R_1^{-1} .
$$
Hence,
$$
E[\Gamma, \Gamma_0;\alpha_{R_1, R_2, \delta} \psi_{\mathbf{Z}, \mathbf{Z}}]\leq C_4^\prime \left| E[\Gamma, \Gamma_0; \alpha_{R_1, R_2, \delta}]\right| +C_4^\prime R_1^{-1},
$$
where $C_4^\prime$ is chosen sufficiently large depending on $C_3, C_2$ and $n$. Hence, we have proved the two-sided bound for $\psi_{\mathbf{Y}, \mathbf{Y}}$.
	
To prove the general inequality recall the polarization identity
$$
\psi_{\mathbf{Y}_1,\mathbf{Y}_2} =\frac{1}{4}\left(\psi_{\mathbf{Y}_1+\mathbf{Y}_2,\mathbf{Y}_1+\mathbf{Y}_2}- \psi_{\mathbf{Y}_1-\mathbf{Y}_2,\mathbf{Y}_1-\mathbf{Y}_2}\right).
$$
Observe that
$$
\frac{1}{4}\psi_{\mathbf{Y}_1+\mathbf{Y}_2,\mathbf{Y}_1+\mathbf{Y}_2}= \psi_{\frac{1}{2}(\mathbf{Y}_1+\mathbf{Y}_2),\frac{1}{2}(\mathbf{Y}_1+\mathbf{Y}_2)}
$$
and similarly for the second term. The vector fields $\bar{\mathbf{Y}}_1=\frac{1}{2}(\mathbf{Y}_1+\mathbf{Y}_2)$ and $\bar{\mathbf{Y}}_2=\frac{1}{2}(\mathbf{Y}_1-\mathbf{Y}_2)$ satisfy the hypotheses of the lemma and so, by what we have already shown, 
\begin{align*}
\left|E[\Gamma, \Gamma_0; \alpha_{R_1, R_2, \delta}\psi_{\mathbf{Y}_1, \mathbf{Y}_2}]\right|&\leq \left|E[\Gamma, \Gamma_0; \alpha_{R_1, R_2, \delta}\psi_{\bar{\mathbf{Y}}_1, \bar{\mathbf{Y}}_1}]\right|+\left|E[\Gamma, \Gamma_0; \alpha_{R_1, R_2, \delta}\psi_{\bar{\mathbf{Y}}_2, \bar{\mathbf{Y}}_2}]\right|\\
&\leq 2C_4^\prime \left| E[\Gamma, \Gamma_0; \alpha_{R_1, R_2, \delta}]\right| +2C_4^\prime R_1^{-1}.
\end{align*}
This verifies the lemma with $C_4=2C_4^\prime$.
\end{proof}

In order to study general functions in $\mathfrak{X}^e$ it is necessary to subtract off the appropriate quadratic approximation.  This requires suitable pointwise estimates on the approximation and its error.

\begin{lem}\label{PsiLem}
Consider the constant $R_0$ and the vector field $\mathbf{N}$ given by Proposition \ref{FoliationProp}. There is a constant $C_5=C_5(\Omega^\prime, \Gamma_0)>1$ so that if $\psi$ is an element of $\mathfrak{X}(\overline{\Omega^\prime}\setminus\bar{B}_{R_0})$ and one sets
$$
\mathbf{Z}_{\psi}(p)=\nabla_{\mathbb{S}^n}\psi(p, \mathbf{N}(p))
$$
and
$$
\bar{\psi}(p,\mathbf{v})=\psi(p, \mathbf{v})-(\mathbf{Z}_{\psi}(p)\cdot \mathbf{v})(\mathbf{N}(p)\cdot \mathbf{v}),
$$
then the following is true:
\begin{enumerate}
\item \label{PsiLemItem1} $\Vert |\mathbf{x}| \mathbf{Z}_\psi \Vert_{C^0}+\Vert \nabla\mathbf{Z}_\psi \Vert_{L^\infty} \leq C_5 \Vert \psi \Vert_{\mathfrak{X}}$;
\item \label{PsiLemItem2} $\Vert \bar{\psi} \Vert_{Lip} \leq C_5 \Vert\psi\Vert_{\mathfrak{X}}$;
\item \label{PsiLemItem3} If, in addition, $\psi$ is even, then 
$$
\left|\bar{\psi}(p, \mathbf{v})-\bar{\psi}(p, \mathbf{N}(p))\right|\leq C_5 \left( 1-(\mathbf{N}(p)\cdot \mathbf{v})^2\right)\Vert \psi \Vert_{\mathfrak{X}}.
$$	
\end{enumerate}
\end{lem}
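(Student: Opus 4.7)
\textbf{Proof plan for Lemma \ref{PsiLem}.} The plan is to handle the three items in order of increasing subtlety, with Item \eqref{PsiLemItem3} being the main point.

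For Item \eqref{PsiLemItem1}, I would first note that the pointwise bound $|\mathbf{x}(p)| |\mathbf{Z}_\psi(p)| \leq \|\psi\|_{\mathfrak{X}}$ is immediate from the last term in the definition of $\|\cdot\|_{\mathfrak{X}}$. For the Lipschitz bound on $\mathbf{Z}_\psi$, I would view $\mathbf{Z}_\psi$ as the composition of the map $p\mapsto(p,\mathbf{N}(p))$ with $\nabla_{\mathbb{S}^n}\psi$, and combine the Lipschitz bound $\|\nabla_{\mathbb{S}^n}\psi\|_{Lip}\leq\|\psi\|_{\mathfrak{X}}$ with the fact that $\mathbf{N}$ is Lipschitz on $\overline{\Omega^\prime}\setminus\bar B_{R_0}$ with constant depending only on $C_1$ from Proposition \ref{FoliationProp}\eqref{C1EstimateItem}.

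For Item \eqref{PsiLemItem2}, I would split $\bar\psi = \psi - q$ where $q(p,\mathbf{v}) = (\mathbf{Z}_\psi(p)\cdot\mathbf{v})(\mathbf{N}(p)\cdot\mathbf{v})$. Since $\|\psi\|_{Lip}\leq\|\psi\|_{\mathfrak{X}}$, it suffices to bound $\|q\|_{Lip}$. Differentiating in $\mathbf{v}$ gives a linear combination of $\mathbf{Z}_\psi$ and $\mathbf{N}$, both of whose relevant norms are bounded by Item \eqref{PsiLemItem1} and Proposition \ref{FoliationProp}. Differentiating in $p$ produces terms of the form $(\nabla\mathbf{Z}_\psi\cdot\mathbf{v})(\mathbf{N}\cdot\mathbf{v})$ and $(\mathbf{Z}_\psi\cdot\mathbf{v})(\nabla\mathbf{N}\cdot\mathbf{v})$, each of which is controlled by $\|\psi\|_{\mathfrak{X}}$ on $\overline{\Omega^\prime}\setminus\bar B_{R_0}$ using Item \eqref{PsiLemItem1} together with $|\mathbf{x}|\geq R_0>1$ and the decay $|\nabla\mathbf{N}|\leq C_1|\mathbf{x}|^{-1}$.

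Item \eqref{PsiLemItem3} is the main point and relies on the fact that $q$ is specifically designed so that the spherical gradient of $\bar\psi$ vanishes at $\mathbf{v}=\mathbf{N}(p)$. Concretely, at $\mathbf{v}=\mathbf{N}(p)$ one has $\mathbf{N}\cdot\mathbf{v}=1$ and $\mathbf{Z}_\psi\cdot\mathbf{v}=\mathbf{Z}_\psi\cdot\mathbf{N}=0$ (since $\mathbf{Z}_\psi=\nabla_{\mathbb{S}^n}\psi(p,\mathbf{N})\in T_{\mathbf{N}}\mathbb{S}^n$), so the Euclidean $\mathbf{v}$-derivative of $q$ at $\mathbf{N}$ equals $\mathbf{Z}_\psi$, which already lies in $T_{\mathbf{N}}\mathbb{S}^n$; hence $\nabla_{\mathbb{S}^n} q(p,\mathbf{N})=\mathbf{Z}_\psi=\nabla_{\mathbb{S}^n}\psi(p,\mathbf{N})$, i.e., $\nabla_{\mathbb{S}^n}\bar\psi(p,\mathbf{N}(p))=0$. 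Moreover, $q(p,\cdot)$ is even in $\mathbf{v}$, so $\bar\psi(p,\cdot)$ is even.

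With these two facts in hand, fix $p$ and set $f(\mathbf{v})=\bar\psi(p,\mathbf{v})$. By Item \eqref{PsiLemItem2} together with the quadratic nature of $q(p,\cdot)$, the spherical gradient $\nabla_{\mathbb{S}^n}f$ is Lipschitz on $\mathbb{S}^n$ with constant $\leq C\|\psi\|_{\mathfrak{X}}$. For $\mathbf{v}\in\mathbb{S}^n$ with $\theta=\arccos(\mathbf{N}\cdot\mathbf{v})\in[0,\pi/2]$, I would integrate $\nabla_{\mathbb{S}^n}f$ along the unit-speed minimizing geodesic from $\mathbf{N}$ to $\mathbf{v}$; since $\nabla_{\mathbb{S}^n}f(\mathbf{N})=0$, the integrand is bounded by $C\|\psi\|_{\mathfrak{X}}\, t$, giving $|f(\mathbf{v})-f(\mathbf{N})|\leq \tfrac{C}{2}\|\psi\|_{\mathfrak{X}}\,\theta^2$. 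The elementary inequality $\theta^2\leq\tfrac{\pi^2}{4}\sin^2\theta=\tfrac{\pi^2}{4}(1-(\mathbf{N}\cdot\mathbf{v})^2)$ then yields the desired bound. For $\theta\in(\pi/2,\pi]$, evenness of $f$ gives $f(\mathbf{v})=f(-\mathbf{v})$ with $-\mathbf{v}$ at spherical distance $\pi-\theta\leq\pi/2$ from $\mathbf{N}$, and since $(\mathbf{N}\cdot(-\mathbf{v}))^2=(\mathbf{N}\cdot\mathbf{v})^2$, this case reduces to the previous one. Choosing $C_5$ large enough to absorb all constants completes the proof. The only real subtlety is the vanishing-gradient computation in Item \eqref{PsiLemItem3}; the rest is bookkeeping with the norm $\|\cdot\|_{\mathfrak{X}}$ and the estimates of Proposition \ref{FoliationProp}.
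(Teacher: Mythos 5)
Your proposal is correct and follows essentially the same route as the paper: the first two items by direct estimates and the chain rule via Proposition~\ref{FoliationProp}, and the third by observing $\nabla_{\mathbb{S}^n}\bar\psi(p,\mathbf{N}(p))=0$, integrating along a great-circle arc and invoking $\theta^2\leq\frac{\pi^2}{4}\sin^2\theta$. One small imprecision: for the Lipschitz bound on $\nabla_{\mathbb{S}^n}\bar\psi(p,\cdot)$ you cite Item~\eqref{PsiLemItem2}, which only controls $\|\bar\psi\|_{Lip}$; what you actually need (and what the paper uses) is a bound of $\mathfrak{X}$-type on $\bar\psi$, i.e.\ $\|\nabla_{\mathbb{S}^n}\psi\|_{Lip}\leq\|\psi\|_{\mathfrak{X}}$ combined with the fact that $q(p,\cdot)$ is a quadratic form with coefficients bounded via Item~\eqref{PsiLemItem1} -- which you do correctly gesture at with the phrase ``quadratic nature of $q$.''
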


\begin{proof}
By construction,
$$
\sup_{p\in\overline{\Omega^\prime}\setminus\bar{B}_{R_0}} |\mathbf{x}(p)| |\mathbf{Z}_\psi(p)| \leq \Vert \psi \Vert_{\mathfrak{X}}.
$$
By the chain rule and Proposition \ref{FoliationProp}, 
$$
\Vert\nabla\mathbf{Z}_\psi\Vert_{L^\infty} \leq (1+c(n)C_1)\Vert\psi\Vert_{\mathfrak{X}}.
$$
Hence, combining these estimates, Item \eqref{PsiLemItem1} follows as long as $C_5\geq 2+c(n)C_1$. And using Item \eqref{PsiLemItem1} and Proposition \ref{FoliationProp} one readily checks Item \eqref{PsiLemItem2}.
	
To see the final item observe first that if $\psi$ is even, then so is $\bar{\psi}$. In particular, it is enough to establish the estimate when $\mathbf{v}\cdot\mathbf{N}(p)\in [0, 1]$. Furthermore, if $\mathbf{v}= \mathbf{N}(p)$, then the estimate is trivial and so we may assume that $\mathbf{v}\cdot \mathbf{N}(p)\in [0, 1)$. 
	
Set 
$$
\mathbf{w}=\frac{\mathbf{v}-(\mathbf{v}\cdot \mathbf{N}(p)) \mathbf{N}(p)}{|\mathbf{v}-(\mathbf{v}\cdot \mathbf{N}(p)) \mathbf{N}(p)|}
$$
so $\mathbf{w}$ is of unit length and orthogonal to $\mathbf{N}(p)$. In particular, $\mathbf{v}=\cos \tau_0 \mathbf{N}(p)+\sin \tau_0 \mathbf{w}$ where  $\cos\tau_0 =\mathbf{N}(p)\cdot \mathbf{v}\in [0,1)$. As $\cos\tau_0\in [0,1)$,  $\tau_0 \in (0, \frac{\pi}{2}]$. It follows from the Lipschitz bound on $\nabla_{\mathbb{S}^n}\bar{\psi}(p,\cdot)$ and the fact that $\nabla_{\mathbb{S}^n} \bar{\psi}(p,\mathbf{N}(p))=0$, that, for $0\leq \tau \leq \tau_0$,
\begin{align*}
\left|\nabla_{\mathbb{S}^n} \bar{\psi}(p,\cos \tau \mathbf{N}(p)+\sin \tau \mathbf{w})\right|=\left|\int_0^\tau \frac{d}{dt} \nabla_{\mathbb{S}^n} \bar{\psi}(p,\cos t \mathbf{N}(p)+\sin t \mathbf{w})  \, dt\right| \leq c(n)\tau \Vert \bar{\psi} \Vert_{\mathfrak{X}}.
\end{align*}
Integrating this estimate yields
$$
\left|\bar{\psi}(p,\mathbf{v})-\bar{\psi}(p, \mathbf{N}(p))\right|\leq c(n)\tau_0^2 \Vert \bar{\psi} \Vert_{\mathfrak{X}}.
$$
Hence, as
$$
\tau_0^2\leq \frac{\pi^2}{4} \sin^2 \tau_0 =\frac{\pi^2}{4} (1-\cos^2 \tau_0)= \frac{\pi^2}{4} \left(1-(\mathbf{N}(p)\cdot \mathbf{v})^2\right),
$$ 
Item \eqref{PsiLemItem3} follows with $C_5\geq \frac{\pi^2}{4}c(n)$.
\end{proof}

In order to extend from the quadratic approximation to the general case we need to estimate the error and this may be thought of as a sort of bound on the weighted tilt-excess near infinity in terms of the relative entropy.

\begin{prop}\label{GradEstProp}
There is a constant $C_6=C_6(\Omega^\prime, \Gamma_0)>0$ so that, for any $R_0<\frac{1}{2}R_1<R_1-\delta<R_1<R_2$, 
$$
\int_{\Gamma}  \alpha_{R_1,R_2, \delta} \left(1-(\mathbf{N}\cdot\mathbf{n}_{\Gamma})^2\right)  e^{\frac{|\mathbf{x}|^2}{4}} \, d\mathcal{H}^n\leq 2 E[\Gamma, \Gamma_0; \alpha_{R_1, R_2, \delta}]+C_6 R_1^{-4}.
$$
Here $R_0$ is the constant and $\mathbf{N}$ is the vector field given by Proposition \ref{FoliationProp}.
\end{prop}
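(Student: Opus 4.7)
The plan is to reduce the problem to a linear tilt quantity using the pointwise inequality
\[
1-(\mathbf{N}\cdot\mathbf{n}_\Gamma)^2=(1-\mathbf{N}\cdot\mathbf{n}_\Gamma)(1+\mathbf{N}\cdot\mathbf{n}_\Gamma)\leq 2(1-\mathbf{N}\cdot\mathbf{n}_\Gamma),
\]
which holds because both $\mathbf{N}$ and $\mathbf{n}_\Gamma$ are unit vectors and hence $\mathbf{N}\cdot\mathbf{n}_\Gamma\in[-1,1]$. Thus it suffices to bound $\int_\Gamma\alpha_{R_1,R_2,\delta}(1-\mathbf{N}\cdot\mathbf{n}_\Gamma)e^{|\mathbf{x}|^2/4}d\mathcal{H}^n$ from above by $E[\Gamma,\Gamma_0;\alpha_{R_1,R_2,\delta}]$ plus an $R_1^{-4}$ error.

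First I would split the integrand and use Item \eqref{InitialItem} of Proposition \ref{FoliationProp}, namely $\mathbf{N}|_{\Gamma_0}=\mathbf{n}_{\Gamma_0}$, to write, with $\psi_{\mathbf{N}}(p,\mathbf{v})=\mathbf{N}(p)\cdot\mathbf{v}$,
\[
\int_\Gamma \alpha_{R_1,R_2,\delta}(1-\mathbf{N}\cdot\mathbf{n}_\Gamma)e^{\frac{|\mathbf{x}|^2}{4}}\,d\mathcal{H}^n=E[\Gamma,\Gamma_0;\alpha_{R_1,R_2,\delta}]-E[\Gamma,\Gamma_0;\alpha_{R_1,R_2,\delta}\psi_{\mathbf{N}}].
\]
So everything reduces to showing $|E[\Gamma,\Gamma_0;\alpha_{R_1,R_2,\delta}\psi_{\mathbf{N}}]|\leq C\,R_1^{-4}$.

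This is exactly the situation of Lemma \ref{DivergenceLem} applied to $\mathbf{Y}=\mathbf{N}$. The key is to exploit the very sharp decay bounds of $\mathbf{N}$: Item \eqref{C1EstimateItem} of Proposition \ref{FoliationProp} gives $|\mathbf{x}\cdot\mathbf{N}|\leq C_1|\mathbf{x}|^{-1}$, and Item \eqref{ExpanderMCItem} gives $|\operatorname{div}\mathbf{N}+\tfrac{\mathbf{x}}{2}\cdot\mathbf{N}|\leq C_1|\mathbf{x}|^{-n-1}e^{-|\mathbf{x}|^2/4}$. Taking $\gamma_0=-3$, both bounds in the hypothesis of Lemma \ref{DivergenceLem} hold with $M_0$ a constant multiple of $C_1$ — the first directly, and the second using $n\geq 2$ to dominate $|\mathbf{x}|^{-n-1}e^{-|\mathbf{x}|^2/4}$ by $|\mathbf{x}|^{-3}$ for $|\mathbf{x}|$ outside a fixed ball. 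This yields $|E[\Gamma,\Gamma_0;\alpha_{R_1,R_2,\delta}\psi_{\mathbf{N}}]|\leq C_0 C_1 R_1^{-4}$ as desired. Combining with the splitting above and the elementary inequality gives the proposition with $C_6=2C_0C_1$.

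There is no serious obstacle; the only mild technical point to address is that $\mathbf{N}$ is only defined on $\overline{\Omega'}\setminus\bar{B}_{R_0}$ while Lemma \ref{DivergenceLem} is phrased for vector fields on $\overline{\Omega'}$. This is harmless because the cutoff $\alpha_{R_1,R_2,\delta}$ is supported in $\bar{A}_{R_1-\delta,R_2+\delta}\subset\overline{\Omega'}\setminus\bar{B}_{R_0}$ by the hypothesis $R_0<R_1-\delta$, so $\mathbf{N}$ may be replaced by any Lipschitz extension to $\overline{\Omega'}$ without altering the relevant integrals, and the divergence-theorem computation in the proof of Lemma \ref{DivergenceLem} only uses values of the vector field on $\operatorname{spt}(\alpha_{R_1,R_2,\delta})$. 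The conceptual point worth emphasizing is that the quadratic gain from $-3$ to $-4$ in the exponent of $R_1$ — which is what makes the stated $R_1^{-4}$ bound sharper than the $R_1^{-1}$ bounds appearing in Proposition \ref{CalibrationProp} — comes from the fast intrinsic decay of the tangential and divergence-plus-weight components of the self-expander normal foliation rather than from any cancellation between $\Gamma$ and $\Gamma_0$.
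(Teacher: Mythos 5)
Your proposal is correct and follows essentially the same route as the paper: the same pointwise inequality $1-(\mathbf{N}\cdot\mathbf{n}_\Gamma)^2 \leq 2(1-\mathbf{N}\cdot\mathbf{n}_\Gamma)$, the same decomposition into $E[\Gamma,\Gamma_0;\alpha_{R_1,R_2,\delta}]-E[\Gamma,\Gamma_0;\alpha_{R_1,R_2,\delta}\psi_{\mathbf{N}}]$ using $\mathbf{N}|_{\Gamma_0}=\mathbf{n}_{\Gamma_0}$, and the same application of Lemma \ref{DivergenceLem} to $\mathbf{Y}=\mathbf{N}$ with $\gamma_0=-3$ (made possible by the sharp decay of $\operatorname{div}\mathbf{N}+\frac{\mathbf{x}}{2}\cdot\mathbf{N}$ and $\mathbf{x}\cdot\mathbf{N}$ from Proposition \ref{FoliationProp}) to obtain the $R_1^{-4}$ error. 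The paper only uses the lower bound from Lemma \ref{DivergenceLem}, whereas you invoke the absolute value, but that is immaterial since the lemma gives both directions.
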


\begin{proof}
Applying Lemma \ref{DivergenceLem} with $\mathbf{Y}=\mathbf{N}$ and appealing to Proposition \ref{FoliationProp}, gives
$$
\int_{\Gamma} \alpha_{R_1,R_2,\delta}  \mathbf{N}\cdot \mathbf{n}_{\Gamma} e^{\frac{|\mathbf{x}|^2}{4}} \, d\mathcal{H}^n \geq \int_{\Gamma_0} \alpha_{R_1,R_2,\delta} e^{\frac{|\mathbf{x}|^2}{4}} \, d\mathcal{H}^n-C_0 C_1R_1^{-4}.
$$
Thus it follows that
$$
\int_\Gamma \alpha_{R_1,R_2,\delta} \left(1-\mathbf{N}\cdot\mathbf{n}_{\Gamma}\right) e^{\frac{|\mathbf{x}|^2}{4}} \, d\mathcal{H}^n \leq E[\Gamma,\Gamma_0; \alpha_{R_1,R_2,\delta}]+C_0 C_1 R_1^{-4}.
$$
Observe that
$$
1-(\mathbf{N}\cdot\mathbf{n}_\Gamma)^2=\left(1-\mathbf{N}\cdot\mathbf{n}_\Gamma\right) \left(1+\mathbf{N}\cdot\mathbf{n}_\Gamma\right) \leq 2 \left(1-\mathbf{N}\cdot\mathbf{n}_\Gamma\right).
$$
Hence, combining these estimates, the claim follows with $C_6=2C_0C_1$.
\end{proof}

Combining above results yields an analog of Proposition \ref{CalibrationProp} for weights in $\mathfrak{X}^e$ -- i.e., an estimate near infinity. 

\begin{prop}\label{PsiEstProp}
There is a constant $C_7=C_7(\Omega^\prime, \Gamma_0)>0$ so that if $\psi\in \mathfrak{X}^e(\overline{\Omega^\prime})$ satisfies $\Vert \psi \Vert_{\mathfrak{X}}\leq 1$ and $\psi\geq 0$, then, for any $R_0<\frac{1}{2}R_1<R_1-\delta<R_1<R_2$, 
$$
\left|E[\Gamma, \Gamma_0; \alpha_{R_1, R_2, \delta}\psi]\right|	\leq C_7\left| E[\Gamma, \Gamma_0; \alpha_{R_1, R_2, \delta}] \right|+C_7 R_1^{-1}.
$$
Here $R_0$ is the constant given by Proposition \ref{FoliationProp}.
\end{prop}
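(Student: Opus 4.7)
The plan is to approximate an arbitrary even weight $\psi$ by a rank-two quadratic form in the unit normal variable $\mathbf{v}$, with a pointwise-controlled angular remainder, so that the three pieces can be handled by Lemma \ref{LemQuadEstLem}, Proposition \ref{CalibrationProp}, and Proposition \ref{GradEstProp} respectively. Concretely, using the decomposition of Lemma \ref{PsiLem}, I would write
$$
\psi(p,\mathbf{v}) = \psi(p,\mathbf{N}(p)) + \bigl(\bar{\psi}(p,\mathbf{v})-\bar{\psi}(p,\mathbf{N}(p))\bigr) + \psi_{\mathbf{Z}_\psi,\mathbf{N}}(p,\mathbf{v}),
$$
where I use that $\mathbf{Z}_\psi$ is tangent to $\mathbb{S}^n$ at $\mathbf{N}$, so $\mathbf{Z}_\psi\cdot\mathbf{N}=0$ and hence $\bar{\psi}(p,\mathbf{N}(p))=\psi(p,\mathbf{N}(p))$. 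Multiplying by $\alpha_{R_1,R_2,\delta}$ and applying $E[\Gamma,\Gamma_0;\cdot]$, I would estimate each of the three contributions separately.

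For the quadratic term $\psi_{\mathbf{Z}_\psi,\mathbf{N}}$, the pair $\mathbf{Y}_1=\mathbf{N}$ (with $a=1$, $\mathbf{Z}=0$) and $\mathbf{Y}_2=\mathbf{Z}_\psi$ fit the hypotheses of Lemma \ref{LemQuadEstLem} up to the universal factor $C_5$ coming from Lemma \ref{PsiLem} Item \eqref{PsiLemItem1}; bilinearity of $\psi_{\cdot,\cdot}$ then yields
$$
\bigl|E[\Gamma,\Gamma_0;\alpha_{R_1,R_2,\delta}\psi_{\mathbf{Z}_\psi,\mathbf{N}}]\bigr| \leq C_4 C_5\,\bigl|E[\Gamma,\Gamma_0;\alpha_{R_1,R_2,\delta}]\bigr| + C_4 C_5 R_1^{-1}.
$$

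For the angular remainder $\eta(p,\mathbf{v}) := \bar{\psi}(p,\mathbf{v})-\bar{\psi}(p,\mathbf{N}(p))$, evenness of $\psi$ together with Lemma \ref{PsiLem} Item \eqref{PsiLemItem3} gives the pointwise bound $|\eta(p,\mathbf{v})|\leq C_5(1-(\mathbf{N}(p)\cdot\mathbf{v})^2)$, and $\eta$ vanishes on $\Gamma_0$ since $\mathbf{n}_{\Gamma_0}=\mathbf{N}|_{\Gamma_0}$. Thus $E[\Gamma,\Gamma_0;\alpha\eta]$ reduces to an integral over $\Gamma$ of a non-negative integrand, which is controlled directly by the tilt-excess estimate of Proposition \ref{GradEstProp}, producing a bound of the form $2C_5 |E[\Gamma,\Gamma_0;\alpha_{R_1,R_2,\delta}]| + C_5 C_6 R_1^{-4}$. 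For the $\mathbf{v}$-independent term $\psi(\cdot,\mathbf{N}(\cdot))$, the Lipschitz constant is bounded by $\|\psi\|_\mathfrak{X}(1+\|\nabla\mathbf{N}\|_{L^\infty})$, which by Proposition \ref{FoliationProp} is controlled by a constant depending only on $\Omega^\prime$ and $\Gamma_0$. Since this function is non-negative, Proposition \ref{CalibrationProp} (applied after linear rescaling) yields a two-sided bound by $|E[\Gamma,\Gamma_0;\alpha_{R_1,R_2,\delta}]|$ plus a $R_1^{-1}$ error.

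Adding the three estimates and choosing $C_7$ large enough in terms of $C_2, C_4, C_5, C_6$ and the Lipschitz constant of $\mathbf{N}$ gives the claim. I expect the main, though routine, obstacle to be the bookkeeping in verifying that $\mathbf{Z}_\psi/C_5$ and $\mathbf{N}$ genuinely satisfy the hypotheses of Lemma \ref{LemQuadEstLem}, and in propagating constants so that all three pieces fit into a single bound of the announced form; the geometric content is the identification of the correct polynomial-in-$\mathbf{v}$ surrogate for $\psi$, which is already encoded by the construction of $\bar{\psi}$ in Lemma \ref{PsiLem}.
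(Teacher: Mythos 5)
Your decomposition $\psi = \psi(\cdot,\mathbf{N})+\bigl(\bar{\psi}(\cdot,\mathbf{v})-\bar{\psi}(\cdot,\mathbf{N})\bigr)+\psi_{\mathbf{Z}_\psi,\mathbf{N}}$ and your use of Proposition \ref{CalibrationProp}, Proposition \ref{GradEstProp}, and Lemma \ref{LemQuadEstLem} on the three pieces respectively is essentially the paper's argument; the only cosmetic differences are that the paper shifts $\bar{\psi}$ by $+C_5$ to guarantee non-negativity (which your choice of $\phi(p)=\psi(p,\mathbf{N}(p))\ge 0$ sidesteps) and establishes only the lower bound directly, obtaining the upper bound by replacing $\psi$ with $1-\psi$, whereas you get two-sided estimates on each piece. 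Both routes land on the same constants up to bookkeeping, so the proposal is correct and in essence the same proof.
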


\begin{proof}
As $R_1-\delta>R_0$ and $\mathrm{spt}(\alpha_{R_1,R_2,\delta})\subseteq \bar{A}_{R_1-\delta,R_2+\delta}$, we will treat $\psi$ as an element of $\mathfrak{X}^e(\overline{\Omega^\prime}\setminus\bar{B}_{R_0})$ in the following. Set 
$$
\hat{\psi}(p,\mathbf{v})=\bar{\psi}(p,\mathbf{v})+C_5.
$$
As $\Vert \mathbf{Z}_{\psi}\Vert_{C^0} \leq C_5$ and $\psi\geq 0$, this ensures that $\hat{\psi}\geq 0$. One also has
$$
\left|\hat{\psi}(p, \mathbf{v})-\hat{\psi}(p, \mathbf{N}(p))\right|\leq C_5 \left(1-(\mathbf{N}(p)\cdot \mathbf{v})^2\right).
$$
Now let 
$$
\phi(p)=\hat{\psi}(p,\mathbf{N}(p)).
$$
Using Lemma \ref{PsiLem} and Proposition \ref{FoliationProp}, one readily checks that
$$
\Vert\phi\Vert_{Lip}\leq c(n)C_5.
$$
Hence, Proposition \ref{CalibrationProp} applied to $\phi$ gives 
$$
E[\Gamma,\Gamma_0;\alpha_{R_1,R_2,\delta}\phi]\geq -c(n) C_2 C_5 R_1^{-1}.
$$
That is, 
$$
\int_{\Gamma} \alpha_{R_1,R_2,\delta} \hat{\psi}(p, \mathbf{N}(p)) e^{\frac{|\mathbf{x}|^2}{4}} \, d\mathcal{H}^n \geq \int_{\Gamma_0} \alpha_{R_1,R_2,\delta} \hat{\psi}(p, \mathbf{n}_{\Gamma_0}(p)) e^{\frac{|\mathbf{x}|^2}{4}} \, d\mathcal{H}^n - c(n) C_2 C_5 R_1^{-1}.
$$
	
The construction of $\hat{\psi}$ ensures that
\begin{align*}
\hat{\psi}(p,\mathbf{n}_{\Gamma}(p))&=\hat{\psi}(p, \mathbf{N}(p))+\left(\hat{\psi}(p, \mathbf{n}_{\Gamma}(p))- \hat{\psi}(p, \mathbf{N}(p))\right)\\
&\geq \hat{\psi}(p, \mathbf{N}(p))-C_5\left(1-(\mathbf{N}(p)\cdot \mathbf{n}_{\Gamma}(p))^2\right).
\end{align*}
Hence,
\begin{multline*}
\int_{\Gamma} \alpha_{R_1,R_2,\delta} \left(\hat{\psi}(p, \mathbf{n}_{\Gamma}(p))+C_5 \left(1-(\mathbf{N}(p)\cdot \mathbf{n}_{\Gamma}(p))^2\right)\right)  e^{\frac{|\mathbf{x}|^2}{4}} \, d\mathcal{H}^n\\
\geq \int_{\Gamma_0} \alpha_{R_1,R_2,\delta} \hat{\psi}(p, \mathbf{n}_{\Gamma_0}(p)) e^{\frac{|\mathbf{x}|^2}{4}} \, d\mathcal{H}^n-c(n)C_2 C_5 R_1^{-1}.
\end{multline*}
Appealing to Proposition \ref{GradEstProp}, one obtains
\begin{multline*}
\int_{\Gamma} \alpha_{R_1,R_2,\delta}  \hat{\psi}(p, \mathbf{n}_{\Gamma}(p)) e^{\frac{|\mathbf{x}|^2}{4}} \, d\mathcal{H}^n-\int_{\Gamma_0} \alpha_{R_1,R_2,\delta} \hat{\psi}(p, \mathbf{n}_{\Gamma_0}(p)) e^{\frac{|\mathbf{x}|^2}{4}} \, d\mathcal{H}^n \\
\geq -2C_5\left|E[\Gamma, \Gamma_0; \alpha_{R_1, R_2, \delta}]\right|-(C_6+c(n)C_2) C_5 R_1^{-1}.
\end{multline*}
As $\hat{\psi}=\bar{\psi}+C_5$, this implies
\begin{multline*}
\int_{\Gamma} \alpha_{R_1,R_2,\delta} \bar{\psi}(p, \mathbf{n}_{\Gamma}(p)) e^{\frac{|\mathbf{x}|^2}{4}} \, d\mathcal{H}^n-\int_{\Gamma_0} \alpha_{R_1,R_2,\delta} \bar{\psi}(p, \mathbf{n}_{\Gamma_0}(p)) e^{\frac{|\mathbf{x}|^2}{4}} \, d\mathcal{H}^n \\
\geq -3 C_5\left|E[\Gamma, \Gamma_0; \alpha_{R_1, R_2, \delta}]\right|-(C_6+c(n)C_2) C_5 R_1^{-1}.
\end{multline*}
	
Hence, by Lemma \ref{LemQuadEstLem},
\begin{multline*}
\int_{\Gamma} \alpha_{R_1,R_2,\delta} {\psi}(p, \mathbf{n}_{\Gamma}(p)) e^{\frac{|\mathbf{x}|^2}{4}} \, d\mathcal{H}^n-\int_{\Gamma_0} \alpha_{R_1,R_2,\delta} {\psi}(p, \mathbf{n}_{\Gamma_0}(p)) e^{\frac{|\mathbf{x}|^2}{4}} \, d\mathcal{H}^n \\
\geq -(3C_5+C_4)\left|E[\Gamma, \Gamma_0; \alpha_{R_1, R_2, \delta}]\right|-(C_6+c(n) C_2+C_4)C_5 R_1^{-1}.
\end{multline*}
This proves the lower bound for $C_7$ sufficiently large depending on $n, C_6, C_2, C_4$ and $C_5$.
	
To prove the upper bound observe that if $\tilde{\psi}=1-\psi$, then $\tilde{\psi}$ satisfies the hypotheses of the proposition. Observe that
$$
\left| E[\Gamma, \Gamma_0; \alpha_{R_1, R_2, \delta}]\right| \geq E[\Gamma, \Gamma_0; \alpha_{R_1, R_2, \delta}]=E[\Gamma, \Gamma_0; \alpha_{R_1, R_2, \delta}(\psi+\tilde{\psi})].
$$
Hence, using the lower bound we have established, one has
\begin{align*}
\left| E[\Gamma, \Gamma_0; \alpha_{R_1, R_2, \delta}]\right| & \geq E[\Gamma, \Gamma_0; \alpha_{R_1, R_2, \delta}\psi]+ E[\Gamma, \Gamma_0; \alpha_{R_1, R_2, \delta}\tilde{\psi}]\\
& \geq E[\Gamma, \Gamma_0; \alpha_{R_1, R_2, \delta}\psi]-C_7 	\left| E[\Gamma, \Gamma_0; \alpha_{R_1, R_2, \delta}]\right|-C_7 R_1^{-1}
\end{align*}
and so the upper bound holds after, possibly, increasing $C_7$ by one.
\end{proof}

\begin{cor} \label{EstimateAtInfinityCor}	
Suppose $E_{rel}[\Gamma, \Gamma_0]<\infty$ and that $\psi \in \mathfrak{X}^e(\overline{\Omega^\prime})$ satisfies $\Vert \psi\Vert_{\mathfrak{X}} \leq 1$ and $\psi \geq 0$.  For every $\epsilon>0$, there is a radius $R_\epsilon=R_{\epsilon}(\Omega^\prime, \Gamma_0, \Gamma, \epsilon)> R_0$ so that if $R_2>R_1>R_\epsilon$, then 
$$
\left|E[\Gamma, \Gamma_0; \psi; \bar{B}_{R_2}]-E[\Gamma, \Gamma_0; \psi; \bar{B}_{R_1}]\right| \leq \epsilon.
$$
Here $R_0$ is the constant given by Proposition \ref{FoliationProp}.
\end{cor}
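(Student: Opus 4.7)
The plan is to rewrite the difference $E[\Gamma,\Gamma_0;\psi;\bar{B}_{R_2}]-E[\Gamma,\Gamma_0;\psi;\bar{B}_{R_1}]$ as a limit of weighted integrals against the annular cutoffs $\alpha_{R,R',\delta}$, apply the near-infinity two-sided bound from Proposition \ref{PsiEstProp} to the weighted cutoff, and conclude using the Cauchy property of $R\mapsto E_{rel}[\Gamma,\Gamma_0;\bar{B}_R]$ which follows from Theorem \ref{RelEntropyThm} together with the finiteness hypothesis $E_{rel}[\Gamma,\Gamma_0]<\infty$.

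Concretely, for $R_0<R_1<R_2$ and small $\delta>0$ I will first note the cutoff identity $\phi_{R_2,\delta}-\phi_{R_1,\delta}=\alpha_{R_1+\delta,R_2,\delta}$, so that dominated convergence (with dominating function $\mathbf{1}_{\bar{A}_{R_1,R_2+1}}$, whose Gaussian-weighted $n$-area on $\Gamma$ and on $\Gamma_0$ is finite) gives
$$
E[\Gamma,\Gamma_0;\psi;\bar{B}_{R_2}]-E[\Gamma,\Gamma_0;\psi;\bar{B}_{R_1}]=\lim_{\delta\to 0}E[\Gamma,\Gamma_0;\alpha_{R_1+\delta,R_2,\delta}\psi]
$$
and, specializing to $\psi\equiv 1$,
$$
E_{rel}[\Gamma,\Gamma_0;\bar{B}_{R_2}]-E_{rel}[\Gamma,\Gamma_0;\bar{B}_{R_1}]=\lim_{\delta\to 0}E[\Gamma,\Gamma_0;\alpha_{R_1+\delta,R_2,\delta}].
$$

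Next, under the restriction $R_1>2R_0$ and $\delta\in(0,R_1/4)$, the hypotheses of Proposition \ref{PsiEstProp} are satisfied by $\alpha_{R_1+\delta,R_2,\delta}$ and $\psi$, and it yields
$$
\bigl|E[\Gamma,\Gamma_0;\alpha_{R_1+\delta,R_2,\delta}\psi]\bigr|\leq C_7\bigl|E[\Gamma,\Gamma_0;\alpha_{R_1+\delta,R_2,\delta}]\bigr|+C_7(R_1+\delta)^{-1}.
$$
Passing $\delta\to 0$ on both sides, using the two limit identities above, produces the key inequality
$$
\bigl|E[\Gamma,\Gamma_0;\psi;\bar{B}_{R_2}]-E[\Gamma,\Gamma_0;\psi;\bar{B}_{R_1}]\bigr|\leq C_7\bigl|E_{rel}[\Gamma,\Gamma_0;\bar{B}_{R_2}]-E_{rel}[\Gamma,\Gamma_0;\bar{B}_{R_1}]\bigr|+C_7 R_1^{-1}.
$$

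Finally I will invoke the hypothesis that $E_{rel}[\Gamma,\Gamma_0]$ is a finite real number; combined with Theorem \ref{RelEntropyThm} this says the net $R\mapsto E_{rel}[\Gamma,\Gamma_0;\bar{B}_R]$ converges in $\mathbb{R}$ as $R\to\infty$ and is therefore Cauchy. So for any $\epsilon>0$ one picks $R_\epsilon>2R_0$ large enough that $C_7 R_\epsilon^{-1}<\epsilon/2$ and $C_7|E_{rel}[\Gamma,\Gamma_0;\bar{B}_{R_2}]-E_{rel}[\Gamma,\Gamma_0;\bar{B}_{R_1}]|<\epsilon/2$ whenever $R_2>R_1>R_\epsilon$, which gives the claim. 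The whole argument is essentially immediate from Proposition \ref{PsiEstProp}; the only slight technicality is the dominated convergence justification for passing between $\alpha_{R_1+\delta,R_2,\delta}$ and the characteristic-function formulation of $E_{rel}[\,\cdot\,;\bar{B}_R]$, which is harmless because the cutoffs are uniformly bounded and supported in a fixed compact annulus.
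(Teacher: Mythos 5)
Your proof is correct and follows essentially the same route as the paper's: pass from $E[\Gamma,\Gamma_0;\psi;\bar B_{R_2}]-E[\Gamma,\Gamma_0;\psi;\bar B_{R_1}]$ to $\lim_{\delta\to 0}E[\Gamma,\Gamma_0;\alpha_{R_1+\delta,R_2,\delta}\psi]$ by dominated convergence, invoke Proposition \ref{PsiEstProp} (with $\zeta=\psi$ and $\zeta=1$), and then use the Cauchy property of $R\mapsto E_{rel}[\Gamma,\Gamma_0;\bar B_R]$ coming from Theorem \ref{RelEntropyThm} together with finiteness of $E_{rel}[\Gamma,\Gamma_0]$. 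The only differences from the paper are minor bookkeeping choices (how the $\epsilon$ is split and exactly how $R_\epsilon$ is set).
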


\begin{proof}
By the dominated convergence theorem, for any $\zeta \in \mathfrak{X}^e(\overline{\Omega^\prime})$,
$$
E_{rel}[\Gamma, \Gamma_0; \zeta; \bar{B}_{R_2}]-E_{rel}[\Gamma, \Gamma_0; \zeta; \bar{B}_{R_1}]=\lim_{\delta\to 0} E[\Gamma, \Gamma_0; \alpha_{R_2, R_1+\delta, \delta}\zeta].
$$
Hence, by Proposition \ref{PsiEstProp} and the above observation with $\zeta={\psi}$ and $\zeta=1$,  one has
\begin{multline*}
\left| E_{rel}[\Gamma, \Gamma_0; {\psi}; \bar{B}_{R_2}]-E_{rel}[\Gamma, \Gamma_0; \psi; \bar{B}_{R_1}]\right| \\
\leq C_7 \left| E_{rel}[\Gamma, \Gamma_0; \bar{B}_{R_2}]-E_{rel}[\Gamma, \Gamma_0; \bar{B}_{R_1}]\right| + C_7 R_1^{-1}.
\end{multline*}
Observe that, by Theorem \ref{RelEntropyThm} and the fact that $E_{rel}[\Gamma, \Gamma_0]<\infty$, there is an $R_{\epsilon}^\prime>0$ so that if $R>R_{\epsilon}^\prime$, then 
$$
\left|E_{rel}[\Gamma, \Gamma_0]-E_{rel}[\Gamma, \Gamma_0; \bar{B}_R]\right|\leq \frac{\epsilon}{4 C_7}.
$$
Hence, by the triangle inequality, for $R_2>R_1>R_\epsilon^\prime$, one has 
$$
\left| E_{rel}[\Gamma, \Gamma_0; \bar{B}_{R_2}]-E_{rel}[\Gamma, \Gamma_0; \bar{B}_{R_1}]\right| \leq \frac{\epsilon}{2 C_7}.
$$
Hence, setting $R_\epsilon=\max \set{ R_{\epsilon}^\prime, 2C_7 \epsilon^{-1}, R_0}$  proves the claim.
\end{proof}

\begin{prop} \label{CalibrationMainProp}
There is a constant $C_8=C_8(\Omega^\prime,\Gamma_0)>0$ so that if $\psi\in\mathfrak{X}^e(\overline{\Omega^\prime})$ satisfies  $\Vert\psi\Vert_{\mathfrak{X}} \leq 1$ and $\psi\geq 0$, then, for any $0<\delta<1$ and $R>8R_0$,
$$
\left|E[\Gamma, \Gamma_0;\phi_{R,\delta}\psi]\right|	\leq  C_8+C_8 \left| E[\Gamma, \Gamma_0; \phi_{R,\delta}]\right|.
$$
Here $R_0$ is the constant given by Proposition \ref{FoliationProp}.
\end{prop}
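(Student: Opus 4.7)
The plan is to decompose the cutoff as $\phi_{R,\delta} = \eta + \alpha_{4R_0,R,\delta}$, where $\eta := \phi_{4R_0-\delta,\delta}$ is a fixed compactly supported cutoff on $\bar{B}_{4R_0}$. The hypotheses $R > 8R_0$ and $0 < \delta < 1$ (together with $R_0 > 1$) ensure that the parameter choice $R_1 = 4R_0$, $R_2 = R$ satisfies $R_0 < \tfrac{1}{2}R_1 < R_1 - \delta < R_1 < R_2$, so that both Propositions \ref{CalibrationProp} and \ref{PsiEstProp} apply to the annular cutoff $\alpha_{4R_0,R,\delta}$. The two pieces $E[\Gamma, \Gamma_0; \eta\psi]$ and $E[\Gamma, \Gamma_0; \alpha_{4R_0, R, \delta}\psi]$ will then be estimated separately.

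The far-field piece is handled by Proposition \ref{PsiEstProp} directly, which yields
\[
|E[\Gamma, \Gamma_0; \alpha_{4R_0,R,\delta}\psi]| \leq C_7\, |E[\Gamma, \Gamma_0; \alpha_{4R_0,R,\delta}]| + C_7 (4R_0)^{-1}.
\]
The main obstacle will be the inner piece $E[\Gamma, \Gamma_0; \eta\psi]$. Since $|\psi| \leq \|\psi\|_{\mathfrak{X}} \leq 1$, the triangle inequality reduces this to bounding $\int_\Gamma \eta\, e^{|\mathbf{x}|^2/4}\, d\mathcal{H}^n$, but a priori no universal bound exists on the weighted area of $\Gamma$ inside a fixed ball, since a generic Caccioppoli boundary in $\mathcal{C}(\Gamma_0^\prime, \Gamma_1^\prime)$ satisfies no perimeter comparison on $\bar{B}_{4R_0}$. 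The key observation is that the lower bound in Proposition \ref{CalibrationProp} applied with weight identically $1$ gives $E[\Gamma, \Gamma_0; \alpha_{4R_0,R,\delta}] \geq -C_2 (4R_0)^{-1}$, and hence
\[
E[\Gamma, \Gamma_0; \eta] = E[\Gamma, \Gamma_0; \phi_{R,\delta}] - E[\Gamma, \Gamma_0; \alpha_{4R_0,R,\delta}] \leq E[\Gamma, \Gamma_0; \phi_{R,\delta}] + C_2 (4R_0)^{-1}.
\]
Together with the fact that $\int_{\Gamma_0} \eta\, e^{|\mathbf{x}|^2/4}\, d\mathcal{H}^n \leq C^\prime(\Gamma_0, R_0)$ (as $\eta$ is compactly supported and $\Gamma_0$ is smoothly asymptotically conical), this rearranges to $\int_\Gamma \eta\, e^{|\mathbf{x}|^2/4}\, d\mathcal{H}^n \leq |E[\Gamma, \Gamma_0; \phi_{R,\delta}]| + C$ for a universal constant $C$. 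This is the substance of the argument.

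With this bound in place the remaining steps are routine. It immediately produces $|E[\Gamma, \Gamma_0; \eta\psi]| \leq |E[\Gamma, \Gamma_0; \phi_{R,\delta}]| + C$. The trivial lower bound $E[\Gamma, \Gamma_0; \eta] \geq -C^\prime$ combined with the preceding upper bound on $E[\Gamma, \Gamma_0; \eta]$ yields $|E[\Gamma, \Gamma_0; \eta]| \leq |E[\Gamma, \Gamma_0; \phi_{R,\delta}]| + C$, whence $|E[\Gamma, \Gamma_0; \alpha_{4R_0, R, \delta}]| \leq 2|E[\Gamma, \Gamma_0; \phi_{R,\delta}]| + C$ via the triangle inequality applied to $\alpha_{4R_0, R, \delta} = \phi_{R,\delta} - \eta$. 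Substituting into $E[\Gamma, \Gamma_0; \phi_{R,\delta}\psi] = E[\Gamma, \Gamma_0; \eta\psi] + E[\Gamma, \Gamma_0; \alpha_{4R_0, R, \delta}\psi]$ and choosing $C_8$ large enough (depending only on $C_2$, $C_7$, $R_0$ and on the fixed geometry of $\Gamma_0$ near $\bar{B}_{4R_0}$, hence only on $\Omega^\prime$ and $\Gamma_0$) produces the claimed inequality. The conceptual content is that although control of $E_{rel}$ on the fixed ball $\bar{B}_{4R_0}$ is not available for free, the annular lower bound of Proposition \ref{CalibrationProp} effectively transfers the needed information from infinity back to a fixed compact region.
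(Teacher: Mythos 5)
Your proof is correct and follows essentially the same route as the paper: you use the identical decomposition $\phi_{R,\delta}=\phi_{4R_0-\delta,\delta}+\alpha_{4R_0,R,\delta}$, Proposition~\ref{PsiEstProp} for the annular piece, and the key step of invoking the lower bound of Proposition~\ref{CalibrationProp} on the annulus to control the weighted area of $\Gamma$ inside $\bar B_{4R_0}$. The only difference is cosmetic bookkeeping (you isolate the bound on $\int_\Gamma\eta\,e^{|\mathbf{x}|^2/4}$ explicitly, while the paper bounds $|E[\Gamma,\Gamma_0;\phi_{4R_0-\delta,\delta}]|$ directly), so the two arguments are logically equivalent.
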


\begin{proof}
Set $R_1=4R_0>4$ and observe that $R>R_1>R_1-\delta>\frac{1}{2} R_1 >R_0$. One has
$$
E[\Gamma, \Gamma_0;\phi_{R,\delta}\psi]=  E[\Gamma, \Gamma_0;\phi_{R_1-\delta,\delta}\psi]+E[\Gamma, \Gamma_0;\alpha_{R_1,R,\delta}\psi].
$$
As $0\leq \psi \leq 1$, one readily sees that
$$
-\int_{\Gamma_0} \phi_{R_1-\delta,\delta} e^{\frac{|\mathbf{x}|^2}{4}} \, d\mathcal{H}^n \leq  E[\Gamma, \Gamma_0; \phi_{R_1-\delta,\delta}\psi]
$$
and
$$
E[\Gamma, \Gamma_0;\phi_{R_1-\delta,\delta}\psi]\leq E[\Gamma, \Gamma_0;\phi_{R_1-\delta,\delta}]+\int_{\Gamma_0} \phi_{R_1-\delta,\delta} e^{\frac{|\mathbf{x}|^2}{4}} \, d\mathcal{H}^n. 
$$
Hence, setting 
$$
C_8^\prime=C_8^\prime(\Gamma_0)=\int_{\Gamma_0} \phi_{R_1-\delta,\delta} e^{\frac{|\mathbf{x}|^2}{4}} \, d\mathcal{H}^n
$$
one has
$$
-C_8^\prime \leq  E[\Gamma, \Gamma_0;\phi_{R_1-\delta,\delta}\psi]\leq E[\Gamma, \Gamma_0;\phi_{R_1-\delta,\delta}]+C_8^\prime
$$
and so
$$
\left| E[\Gamma, \Gamma_0;\phi_{R_1-\delta,\delta}\psi]\right| \leq \left|E[\Gamma, \Gamma_0;\phi_{R_1-\delta,\delta}] \right| + C_8^\prime.
$$
By Proposition \ref{PsiEstProp}, 
$$
\left|E[\Gamma, \Gamma_0;\alpha_{R_1,R,\delta}\psi]\right| \leq C_7\left| E[\Gamma, \Gamma_0; \alpha_{R_1, R_2, \delta}] \right|+C_7 R_1^{-1}.
$$
Finally, Proposition \ref{CalibrationProp} implies that
$$
E[\Gamma, \Gamma_0;\phi_{R_1-\delta,\delta}]-C_2 R_1^{-1} \leq E[\Gamma, \Gamma_0;\phi_{R_1-\delta,\delta}]+E[\Gamma, \Gamma_0;\alpha_{R_1,R,\delta}]=E[\Gamma, \Gamma_0;\phi_{R,\delta}]
$$
and so
$$
\left|E[\Gamma, \Gamma_0;\phi_{R_1-\delta,\delta}]\right|\leq C_8^\prime+C_2 R_1^{-1}+\left|E[\Gamma, \Gamma_0;\phi_{R,\delta}]\right|.
$$
Likewise,
$$
E[\Gamma, \Gamma_0;\alpha_{R_1,R,\delta}]-C^\prime_8\leq E[\Gamma, \Gamma_0;\phi_{R_1-\delta,\delta}]+E[\Gamma, \Gamma_0;\alpha_{R_1,R,\delta}]=E[\Gamma, \Gamma_0;\phi_{R,\delta}]
$$
and so
$$
\left|E[\Gamma, \Gamma_0;\alpha_{R_1, R,\delta}]\right|\leq C_2 R_1^{-1}+C_8^\prime +\left|E[\Gamma, \Gamma_0;\phi_{R,\delta}]\right|.
$$
Hence, 
$$
\left|E[\Gamma, \Gamma_0;\phi_{R,\delta}\psi]\right|\leq C_8^\prime+C_7 R_1^{-1}+(C_7+1)(C_8^\prime+C_2R^{-1}_1) +(1+C_7)\left|E[\Gamma, \Gamma_0;\phi_{R,\delta}]\right|.
$$
and the claim follows by choosing $C_8$ large enough.
\end{proof}

We now prove Theorem \ref{WeightRelThm}.

\begin{proof}[Proof of Theorem \ref{WeightRelThm}]
If $\Vert\psi\Vert_{\mathfrak{X}}=0$, then the theorem holds trivially. So suppose $\Vert\psi\Vert_{\mathfrak{X}}\neq 0$ and set $\hat{\psi}=\frac{1}{2\Vert \psi\Vert_{\mathfrak{X}}} \left(\psi +\Vert \psi\Vert_{\mathfrak{X}}\right)$. Observe that $\hat{\psi}\geq 0$ and $\Vert \hat{\psi}\Vert_{\mathfrak{X}}\leq 1$. As $E_{rel}[\Gamma, \Gamma_0]<\infty$, it is  an immediate consequence of Corollary \ref{EstimateAtInfinityCor} that 
$$
E_{rel}[\Gamma, \Gamma_0, \hat{\psi}]=\lim_{R\to \infty} E_{rel}[\Gamma, \Gamma_0; \hat{\psi}; \bar{B}_R]
$$
exists and is finite.

By the dominated convergence theorem,
$$
E_{rel}[\Gamma, \Gamma_0; \hat{\psi}; \bar{B}_R]=\lim_{\delta\to 0} E[\Gamma, \Gamma_0;  \phi_{R, \delta}\hat{\psi}].
$$
Hence, for $R>4R_0$, it follows from Proposition \ref{CalibrationMainProp} by taking $\delta\to 0$ that
$$
| E_{rel}[\Gamma, \Gamma_0; \hat{\psi}; \bar{B}_R]| \leq C_8+ C_8 |E_{rel}[\Gamma, \Gamma_0; \bar{B}_R]|.
$$
Taking the limit as $R\to \infty$, which is well defined on both sides by Theorem \ref{RelEntropyThm} and what we have already shown, gives 
$$
| E_{rel}[\Gamma, \Gamma_0; \hat{\psi}]| \leq C_8+ C_8 |E_{rel}[\Gamma, \Gamma_0]|.
$$
Finally, by linearity of $\zeta\mapsto E_{rel}[\Gamma, \Gamma_0; \zeta]$ and the triangle inequality one has
$$
|E_{rel}[\Gamma, \Gamma_0; {\psi}]| \leq 2 C_8 \left(1+2  |E_{rel}[\Gamma, \Gamma_0]|\right)\Vert \psi \Vert_{\mathfrak{X}}
$$
and so the claim follows by setting $C_9=4 C_8$.
\end{proof}  

Finally, we record the following analog of the dominated convergence theorem for the $E_{rel}$ functional. 

\begin{prop}\label{DominatedConvProp}
Suppose $E_{rel}[\Gamma, \Gamma_0]<\infty$. If $\psi_i\in \mathfrak{X}^e(\overline{\Omega^\prime})$ is a sequence with $\Vert \psi_i \Vert_{\mathfrak{X}} \leq M_1<\infty$ and so that $\psi_i\to \psi_\infty$ pointwise, where $\psi_\infty\in \mathfrak{X}^e(\overline{\Omega^\prime})$ satisfies $\Vert \psi_\infty\Vert_{\mathfrak{X}}\leq M_1$, then
$$
\lim_{i\to \infty} E_{rel}[\Gamma, \Gamma_0; \psi_i]=E_{rel}[\Gamma, \Gamma_0; \psi_\infty].
$$
\end{prop}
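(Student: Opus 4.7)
The plan is to reduce, by linearity of $\zeta \mapsto E_{rel}[\Gamma,\Gamma_0;\zeta]$ and the fact that Theorem \ref{WeightRelThm} guarantees each $E_{rel}[\Gamma,\Gamma_0;\psi_i]$ and $E_{rel}[\Gamma,\Gamma_0;\psi_\infty]$ exist and are finite, to showing $E_{rel}[\Gamma,\Gamma_0;\zeta_i] \to 0$ for $\zeta_i := \psi_i - \psi_\infty$. Here $\zeta_i$ is even, $\Vert \zeta_i \Vert_{\mathfrak{X}} \leq 2M_1$, and $\zeta_i \to 0$ pointwise on $\overline{\Omega^\prime} \times \mathbb{S}^n$. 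Following the usual $\epsilon/3$ strategy for dominated convergence, I would split
$$
E_{rel}[\Gamma,\Gamma_0;\zeta_i] = E_{rel}[\Gamma,\Gamma_0;\zeta_i;\bar{B}_R] + \left(E_{rel}[\Gamma,\Gamma_0;\zeta_i] - E_{rel}[\Gamma,\Gamma_0;\zeta_i;\bar{B}_R]\right),
$$
first choosing $R$ large enough (uniformly in $i$) to control the tail, and then sending $i\to\infty$ on the bounded piece.

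For the tail, write $\zeta_i = 4M_1 \hat\zeta_i - 2M_1$, where $\hat\zeta_i := \frac{1}{4M_1}(\zeta_i + 2M_1)$ lies in $\mathfrak{X}^e(\overline{\Omega^\prime})$ with $\hat\zeta_i \geq 0$ and $\Vert \hat\zeta_i \Vert_{\mathfrak{X}} \leq 1$. By Corollary \ref{EstimateAtInfinityCor} — applied to $\hat\zeta_i$ and then taking $R_2 \to \infty$, which is legal because $E_{rel}[\Gamma,\Gamma_0;\hat\zeta_i]$ exists by Theorem \ref{WeightRelThm} — there is a radius $R_\epsilon = R_\epsilon(\Omega^\prime,\Gamma_0,\Gamma,\epsilon)$, \emph{independent of} $i$, so that
$$
\left| E_{rel}[\Gamma,\Gamma_0;\hat\zeta_i] - E_{rel}[\Gamma,\Gamma_0;\hat\zeta_i;\bar{B}_R] \right| \leq \epsilon \quad \text{for all } R > R_\epsilon.
$$
Theorem \ref{RelEntropyThm}, together with the hypothesis $E_{rel}[\Gamma,\Gamma_0] < \infty$, yields an analogous tail bound for the constant part. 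Combining and using the linear expression for $\zeta_i$ in terms of $\hat\zeta_i$ and $\mathbf{1}$, I obtain a radius $R_* = R_*(\epsilon, M_1)$, independent of $i$, with
$$
\left| E_{rel}[\Gamma,\Gamma_0;\zeta_i] - E_{rel}[\Gamma,\Gamma_0;\zeta_i;\bar{B}_{R_*}] \right| \leq (4M_1 + 2M_1)\epsilon.
$$

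For the bounded part, on $\bar{B}_{R_*}$ the weight $e^{|\mathbf{x}|^2/4}$ is bounded by $e^{R_*^2/4}$, the integrands $\zeta_i(p,\mathbf{n}_\Gamma(p))$ and $\zeta_i(p,\mathbf{n}_{\Gamma_0}(p))$ are uniformly bounded by $2M_1$ and converge pointwise to $0$ wherever defined, and $\mathcal{H}^n(\Gamma \cap \bar{B}_{R_*}),\ \mathcal{H}^n(\Gamma_0 \cap \bar{B}_{R_*}) < \infty$ since $\Gamma$ is the reduced boundary of a Caccioppoli set and $\Gamma_0$ is a fixed smooth hypersurface. The classical dominated convergence theorem therefore gives $E_{rel}[\Gamma,\Gamma_0;\zeta_i;\bar{B}_{R_*}] \to 0$ as $i \to \infty$. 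Combining with the tail bound produces $\limsup_i |E_{rel}[\Gamma,\Gamma_0;\zeta_i]| \leq 6M_1 \epsilon$, and since $\epsilon > 0$ is arbitrary, this concludes the argument.

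The main obstacle is ensuring the tail estimate is truly uniform in $i$, since dominated convergence fails without it. This is handled by the explicit $\psi$-independence of $R_\epsilon$ in Corollary \ref{EstimateAtInfinityCor}: that dependence is declared to be only on $(\Omega^\prime, \Gamma_0, \Gamma, \epsilon)$, and reducing to the nonnegative unit-norm companions $\hat\zeta_i$ via the shift-and-rescale trick is what allows us to invoke it uniformly along the sequence.
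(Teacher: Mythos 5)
Your proof is correct and follows essentially the same route as the paper's: form the nonnegative, unit-norm companions via a shift-and-rescale, invoke the $\psi$-independence of $R_\epsilon$ in Corollary \ref{EstimateAtInfinityCor} for a uniform tail bound, apply the classical dominated convergence theorem on the bounded piece, and recombine by linearity. The only cosmetic difference is that you first subtract $\psi_\infty$ and show $E_{rel}[\Gamma,\Gamma_0;\zeta_i]\to 0$, whereas the paper shifts and rescales each $\psi_i$ directly to $\hat\psi_i=\frac{1}{2M_1}(\psi_i+M_1)$ and shows convergence of $E_{rel}[\Gamma,\Gamma_0;\hat\psi_i]$ to $E_{rel}[\Gamma,\Gamma_0;\hat\psi_\infty]$; by linearity the two reductions are equivalent.
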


\begin{proof}
For $1\leq i \leq \infty$, set $\hat{\psi}_i=\frac{1}{2M_1}\left(\psi_i+M_1\right)$ and observe that $\Vert \hat{\psi}_i \Vert_{\mathfrak{X}}\leq 1$ and $\hat{\psi}_i \geq 0$. For every $\epsilon>0$, Corollary \ref{EstimateAtInfinityCor} implies that there is an $R_{\epsilon}>R_0$ so that, for all $R>R_{\epsilon}$ and all $1\leq i \leq \infty$, 
$$
 \left| E_{rel} [\Gamma, \Gamma_0; \hat{\psi}_i ]- E_{rel} [\Gamma, \Gamma_0; \hat{\psi}_i; \bar{B}_R]\right|< \frac{\epsilon}{3}.
$$
By the dominated convergence theorem, 
$$
\lim_{i\to \infty} 	E_{rel}[\Gamma, \Gamma_0;  \hat{\psi}_i; \bar{B}_{2R_{\epsilon}}]=	E_{rel}[\Gamma, \Gamma_0;  \hat{\psi}_\infty; \bar{B}_{2R_{\epsilon}}].
$$
Hence, there is an $i_0$ so that for $i\geq i_0$ one has
$$
 \left| E_{rel}[\Gamma, \Gamma_0;  \hat{\psi}_i; \bar{B}_{2R_{\epsilon}}]-E_{rel}[\Gamma, \Gamma_0;  \hat{\psi}_\infty; \bar{B}_{2R_{\epsilon}}]\right| <\frac{\epsilon}{3}.
$$
It follows from the triangle inequality that, for $i\geq i_0$, 
$$
\left| E_{rel} [\Gamma, \Gamma_0; \hat{\psi}_i ]- E_{rel} [\Gamma, \Gamma_0; \hat{\psi}_\infty]\right|< \epsilon.
$$
That is, 
$$ \lim_{i\to \infty} E_{rel} [\Gamma, \Gamma_0; \hat{\psi}_i ]= E_{rel} [\Gamma, \Gamma_0; \hat{\psi}_\infty ].
$$
The result then follows by the linearity of $\zeta \mapsto E_{rel}[\Gamma, \Gamma_0; \zeta]$.
\end{proof}

\section{$E_{rel}$-minimizers} \label{MinimizationSec}
Continue to use the conventions of Section \ref{Conventions}. In this section we use the previously established facts about $E_{rel}[\cdot, \Gamma_0]$ to show that this functional is coercive and lower-semi-continuous in an appropriate sense.  Hence,  there is a minimizer of $E_{rel}$ in $\mathcal{C}(\Gamma_0, \Gamma_1)$.  As this minimizer is a local $E$-minimizer, when $2\leq n\leq 6$, Theorem \ref{MinThm} follows immediately from this by standard regularity results.  

\begin{thm}\label{LowRegExistThm}
There is a Caccioppoli set $U_{min}\in \mathcal{C}(\Gamma_0, \Gamma_1)$ with $\Gamma_{min}=\partial^* U_{min}$ a critical point of the functional $E$ so that, for all $U\in  \mathcal{C}(\Gamma_0, \Gamma_1)$, 
$$
E_{rel}[\partial^* U, \Gamma_0]\geq E_{rel}[\partial^* U_{min}, \Gamma_0].
$$ 
Moreover, if $2\leq n \leq 6$, then $\Gamma_{min}$ is a smooth self-expander.
\end{thm}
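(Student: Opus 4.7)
The plan is to apply the direct method of the calculus of variations, using the tail control from Theorem \ref{RelEntropyThm} to handle the non-compactness of $\mathbb{R}^{n+1}$. First I would fix a minimizing sequence $U_k \in \mathcal{C}(\Gamma_0,\Gamma_1)$; since $E_{rel}[\Gamma_0, \Gamma_0] = 0$ one may assume $E_{rel}[\partial^* U_k, \Gamma_0] \leq 1$. By the second estimate of Theorem \ref{RelEntropyThm}, for every $R > R_0$,
\[
E[\partial^* U_k \cap \bar{B}_R] \leq E[\Gamma_0 \cap \bar{B}_R] + 1 + C_2 R^{-1},
\]
giving a uniform bound on the perimeter of $U_k$ inside $B_R$. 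Standard $BV_{loc}$ compactness then extracts a subsequence converging in $L^1_{loc}(\mathbb{R}^{n+1})$ to a Caccioppoli set $U_{min}$; the pointwise sandwich $U_0 \subseteq U_k \subseteq U_1$ passes to the limit, so $U_{min} \in \mathcal{C}(\Gamma_0,\Gamma_1)$ and $\Gamma_{min} := \partial^* U_{min}$ is well defined.

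Next I would pass to the limit in $E_{rel}$. For any fixed $R > R_0$ (outside an at-most-countable set where boundary concentration could occur), lower semi-continuity of the smoothly weighted perimeter under $L^1_{loc}$ convergence yields
\[
E_{rel}[\Gamma_{min}, \Gamma_0; \bar{B}_R] \leq \liminf_{k \to \infty} E_{rel}[\partial^* U_k, \Gamma_0; \bar{B}_R].
\]
Combining with Theorem \ref{RelEntropyThm} applied to each $\partial^* U_k$,
\[
E_{rel}[\Gamma_{min}, \Gamma_0; \bar{B}_R] \leq \liminf_{k\to\infty} E_{rel}[\partial^* U_k, \Gamma_0] + C_2 R^{-1}.
\]
Letting $R \to \infty$ and invoking Theorem \ref{RelEntropyThm} once more to take the limit on the left shows $U_{min}$ realizes the infimum; in particular the infimum is finite, as Theorem \ref{RelEntropyMainThm} rules out the value $-\infty$ at $\Gamma_{min}$, while the value at $\Gamma_0$ bounds it above by $0$.

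For the Euler-Lagrange property, $U_{min}$ is trivially a local $E$-minimizer in $\mathcal{C}(\Gamma_0, \Gamma_1)$, since for any competitor $U'$ agreeing with $U_{min}$ outside a ball the quantity $E_{rel}[\partial^* U', \Gamma_0] - E_{rel}[\Gamma_{min}, \Gamma_0]$ collapses to the difference of weighted perimeters on that ball. That $\Gamma_{min}$ is an \emph{unconstrained} weak critical point of $E$ then follows from a one-sided strong maximum principle argument applied to the quasilinear expander PDE \eqref{ExpanderEqn}: at any potential contact point with the smooth obstacle $\Gamma_i$, either the contact is isolated and a two-sided variation is admissible, or $\Gamma_{min}$ coincides with $\Gamma_i$ on an open set and satisfies \eqref{ExpanderEqn} there by virtue of $\Gamma_i$ itself.

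Finally, when $2 \leq n \leq 6$, the smooth positive weight $e^{|\mathbf{x}|^2/4}$ makes $\Gamma_{min}$ a local minimizer of a smooth, uniformly elliptic parametric integrand; the dimension-reduction regularity theory for such functionals (after De Giorgi, Federer-Fleming, Bombieri-De Giorgi-Giusti, and Schoen-Simon-Almgren for smooth integrands) bounds the singular set of $\Gamma_{min}$ by Hausdorff dimension at most $n-7$, which is negative in this range. Hence $\Gamma_{min}$ is smooth and satisfies \eqref{ExpanderEqn} classically. The principal obstacle throughout the argument is the lower semi-continuity of $E_{rel}$ in the presence of the exponentially growing weight; this would fail in general without the uniform tail control over minimizing sequences provided by Theorem \ref{RelEntropyThm}.
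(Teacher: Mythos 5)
Your proposal follows essentially the same route as the paper: coercivity from Theorem~\ref{RelEntropyThm}, uniform local perimeter bounds on a minimizing sequence, extraction of a $BV_{loc}$ limit in $\mathcal{C}(\Gamma_0,\Gamma_1)$, passage to the limit via lower semi-continuity on almost every ball combined with the tail estimate, and regularity by dimension reduction in the range $2\le n\le 6$. The normalization $E_{rel}[\Gamma_0,\Gamma_0]=0$ you use to cap the minimizing sequence by $1$ is an acceptable alternative to the paper's trimming of finitely many terms.

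The one place you deviate is the critical-point claim. You first observe (correctly) that $U_{min}$ is a local $E$-minimizer in the constrained class $\mathcal{C}(\Gamma_0,\Gamma_1)$, and then you try to upgrade to \emph{unconstrained} stationarity for $E$ via a strong maximum principle argument at contact points, phrased as a dichotomy between isolated contact (allowing two-sided variations) and open coincidence with the obstacle. This is the right intuition, but as written it is out of order: a $BV$ set merely known to minimize with obstacles need not admit two-sided variations at an isolated contact point, and the strong maximum principle for the expander PDE \eqref{ExpanderEqn} requires the minimizer to be (at least $C^{1,\alpha}$) smooth in a neighborhood of the contact set before it can be applied. The paper handles this by invoking obstacle-problem regularity theory (the reference to \cite[Section 37]{Simon}) \emph{first}, which in the range $2\le n\le 6$ yields smoothness and thereby the component-wise dichotomy: each component of $\Gamma_{min}$ is either disjoint from $\Gamma_0\cup\Gamma_1$ or coincides with a component of it. So the criticality and smoothness are established together, in that order, rather than the other way around. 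Your sketch would close once you cite the obstacle regularity theory before running the maximum principle; without it, the argument only yields the constrained (variational-inequality) notion of criticality, which is in fact all the paper itself establishes outside the range $2\le n\le 6$.
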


\begin{proof}
Set $E_{min}=\inf\set{ E_{rel}[\partial^* U, \Gamma_0] \colon U\in  \mathcal{C}(\Gamma_0, \Gamma_1)}$. By Theorem \ref{RelEntropyThm}, there is a constant $\bar{E}=\bar{E}(\Gamma_1, \Gamma_0)\geq 0$ so that, for all $U\in \mathcal{C}(\Gamma_0,  \Gamma_1)$, 
$$
E_{rel}[\Gamma, \Gamma_0]\geq -\bar{E}.
$$
Hence, if $U_i$ is a minimizing sequence in $\mathcal{C}(\Gamma_0, \Gamma_1)$ for $E_{rel}[\cdot, \Gamma_0]$, then
$$
\lim_{i\to \infty} E_{rel}[\partial^* U_i,  \Gamma_0]=E_{min}\geq -\bar{E}>-\infty.
$$
and so, up to throwing out finitely many terms, one has
$$
 E_{min}\leq E_{rel}[\partial^* U_i, \Gamma_0]\leq E_{min}+1.
$$

For $R>0$, 
$$
P_{\bar{B}_R}(U_i)\leq \int_{\bar{B}_R\cap \partial^* U_i} e^{\frac{|\mathbf{x}|^2}{4}} \leq  E_0(R)+E_{rel}[\partial^* U_i, \Gamma_0; \bar{B}_R].
$$
Here $P_{\bar{B}_R}(U_i)$ is the perimeter of $U_i$ inside $\bar{B}_R$ and $E_0(R)=\int_{\bar{B}_R\cap \Gamma_0} e^{\frac{|\mathbf{x}|^2}{4}} \, d\mathcal{H}^n$. It follows from Theorem \ref{RelEntropyThm} that, for any $R>R_0$, 
$$
E_{rel}[\partial^* U_i, \Gamma_0; \bar{B}_R] \leq E_{rel}[\partial^* U_i, \Gamma_0] +C_2 R^{-1}
$$
and so, for any $R>R_0$ fixed, 
$$
P_{\bar{B}_R}(U_i)\leq M=E_0(R)+E_{min}+1+C_2R^{-1}<\infty
$$
is uniformly bounded independent of $i$.

Hence, by the standard compactness theorem for Caccioppoli sets, up to passing to a subsequence and relabeling,  $U_i\to U_\infty$ where $U_{\infty}$ is a Caccioppoli set in $\mathcal{C}(\Gamma_0 ,\Gamma_1)$ and the convergence is in the topology of Caccioppoli sets (i.e., $\mathbf{1}_{U_i}\to \mathbf{1}_{U_{\infty}}$ in the weak-* topology of  $BV_{loc}$). It follows from Theorem \ref{RelEntropyThm} that, for all $R>R_0$, 
$$
E_{rel}[\partial^* U_i; \Gamma_0]\geq E_{rel}[\partial^* U_i, \Gamma_0; \bar{B}_R] -C_2 R^{-1}
$$
Hence, passing to a limit and using the nature of the convergence of $U_i\to U_{\infty}$,
\begin{align*}
E_{min} &=\lim_{i\to \infty} E_{rel}[\partial^* U_i, \Gamma_0]\geq \liminf_{i\to \infty} \left(E_{rel}[\partial^* U_i,  \Gamma_0; \bar{B}_R] -C_2 R^{-1}\right)\\
&\geq  E_{rel}[\partial^* U_\infty,  \Gamma_0; \bar{B}_R] -C_2 R^{-1}.
\end{align*}
Taking $R\to \infty$ and appealing to Theorem \ref{RelEntropyThm} gives $E_{min} \geq E_{rel}[\partial^* U_\infty,  \Gamma_0]$.  As $E_{min}$ is the infimum of $E_{rel}[\cdot,\Gamma_0]$ in $\mathcal{C}(\Gamma_0, \Gamma_1) $ and $U_\infty \in\mathcal{C}(\Gamma_0, \Gamma_1) $, $E_{min}= E_{rel}[\partial^* U_\infty; \Gamma_0]$ and so the infimum is achieved. Hence, it remains only to show that $\Gamma_{min}=\partial^* U_{\infty}$ is a self-expander. However, it is clear that $\partial^* U_{\infty}$ must be (locally) $E$-minimizing in $\mathrm{cl}(U_1)\backslash U_0$ as otherwise $E_{min}$ would not be the infimum of $E_{rel}[\cdot,  \Gamma_0]$.

When $2\leq n \leq 6$, standard regularity theory for minimizing sets with obstacles, e.g., \cite[Section 37]{Simon}, implies $\Gamma_{min}$ is a smooth self-expander each of whose components is either entirely disjoint from $\Gamma_0\cup \Gamma_1$ or entirely agrees with a component of $\Gamma_0\cup \Gamma_1$. That is, $\Gamma_{min}\in\mathcal{H}(\Gamma_0,\Gamma_1)$.
\end{proof}

By adapting the approach sketched by Ilmanen \cite{IlmanenLec} and carried out by Ding \cite{Ding} to the obstacle setting, one may use standard GMT methods to construct a local $E$-minimizer in $\mathcal{H}(\Gamma_0, \Gamma_1)$.  Combined with Remark \ref{MinimizerRemark}, this gives an alternative approach to Theorem \ref{LowRegExistThm}.

\section{Forward monotonicity} \label{MonotoneSec}
We continue to follow the conventions of Section \ref{Conventions} and Section \ref{CalibrationSec}. Following Ilmanen \cite[Section 6]{IlmanenElliptic} (cf. \cite{Brakke}), a \emph{Brakke flow} is a family of Radon measures $\set{\mu_t}_{t\in (0,T)}$ on $\mathbb{R}^{n+1}$ which satisfies, for all non-negative $\psi\in C^1_{c}(\mathbb{R}^{n+1})$ and all $0<t_0\leq t_1 <T$,
$$
\int \psi \, d\mu_{t_1} \leq \int \psi \, d\mu_{t_0}+ \int_{t_0}^{t_1}\int \left(-\psi |\mathbf{H}|^2+\nabla\psi\cdot S^\perp\cdot\mathbf{H}\right) d\mu_t dt.
$$
Here $S=S(\mathbf{x})=T_{\mathbf{x}}\mu_t$ is the generalized tangent plane of $\mu_t$ at $\mathbf{x}$ and $\mathbf{H}=\mathbf{H}_{\mu_t}$ is the generalized mean curvature vector of $\mu_t$. The inner integral on the right-hand side of the inequality is interpreted according to the convention that if any quantities are not defined, then take the integral to be $-\infty$. We call a Brakke flow $\set{\mu_t}_{t\in (0,T)}$ \emph{integral} if $\mu_t$ has integer multiplicity for a.e. $t$. It is technically convenient to restrict our study to the smaller class of integral Brakke flows defined in \cite[Section 7]{WhiteLocReg}. This class is compact under the convergence of Brakke flows and is quite general, for instance it includes the flows constructed by Ilmanen's elliptic regularization procedure.

In this section we prove a version of weighted forward monotonicity formula and use it to show the asymptotic behavior of flows coming out of a cone. Theorem \ref{MonotoneThm} is a special case of the following theorem.

\begin{thm} \label{WeightMonotoneThm}
Let $\set{\mu_t}_{t\in (0,T)}$ be an integral Brakke flow that satisfies
\begin{enumerate}
\item $\lim_{t\to 0} \mu_t=\mathcal{H}^n\lfloor\mathcal{C}$;
\item For each $t\in (0,T)$, $t^{-1/2}\mathrm{spt}(\mu_t) \subseteq \overline{\Omega^\prime}$.
\end{enumerate}
For any sequence $t_i\to 0$, there is a subsequence $t_{i_j}\to 0$ and a (possibly singular) self-expander $\hat{\nu}$ asymptotic to $\mathcal{C}$ and with $\spt(\hat{\nu})\subseteq\overline{\Omega^\prime}$ so that 
$$
\mathscr{D}_{t^{-1/2}_{i_j}}\mu_{t_{i_j}} \to \hat{\nu}.
$$
Here, for a measure $\mu$ and $\rho>0$, $\mathscr{D}_\rho\mu$ is the measure given by 
$$
\mathscr{D}_\rho\mu(Y)=\rho^n\mu(\rho^{-1}Y) \mbox{ for all $\mu$-measurable subsets $Y\subseteq\mathbb{R}^{n+1}$}.
$$
\end{thm}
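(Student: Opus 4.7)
The plan is to pass to the rescaled flow in which self-expanders become stationary, derive a monotonicity formula for the relative expander entropy along this rescaled flow, and then use Brakke compactness together with the monotonicity to identify the limit as a self-expander.

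First I would work with the rescaled Brakke flow $\tilde\mu_\tau := \mathscr{D}_{e^{-\tau/2}} \mu_{e^\tau}$ for $\tau \in (-\infty,\log T)$. Under this rescaling the MCF equation becomes $(\partial_\tau \mathbf{x})^\perp = \mathbf{H} - \mathbf{x}^\perp/2$, for which self-expanders (in particular $\Gamma_0$ and $\Gamma_1$) are fixed points. Hypothesis (2) reads $\spt(\tilde\mu_\tau) \subseteq \overline{\Omega^\prime}$ for every $\tau$, and $\tilde\mu_{\log t_i}$ is precisely $\mathscr{D}_{t_i^{-1/2}} \mu_{t_i}$, so it suffices to show subsequential convergence of $\tilde\mu_{\tau_i}$ to a self-expander as $\tau_i = \log t_i \to -\infty$.

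Second, the heart of the argument is the monotonicity that $\tau \mapsto E_{rel}[\tilde\mu_\tau, \Gamma_0]$ is nonincreasing in $\tau$. In the smooth case this comes from the identity
\[
\frac{d}{d\tau} E[\tilde\mu_\tau] = -\int \left|\mathbf{H} - \tfrac{\mathbf{x}^\perp}{2}\right|^2 e^{|\mathbf{x}|^2/4} \, d\tilde\mu_\tau \leq 0,
\]
combined with the fact that $\Gamma_0$ is stationary under the rescaled flow, so the subtraction used to define $E_{rel}$ inherits the same one-sided bound. In the Brakke setting I would test the Brakke inequality against the cutoff weight $\phi_{R,\delta} e^{|\mathbf{x}|^2/4}$, argue as in Section \ref{CalibrationSec} that the boundary correction is controlled by Lemma \ref{SliceAreaLem} on $\overline{\Omega^\prime}$, and pass $R \to \infty$. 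The lower bound $E_{rel}[\tilde\mu_\tau, \Gamma_0] \geq -\bar E(\Gamma_0,\Gamma_1)$ from Theorem \ref{RelEntropyThm}, combined with the monotonicity, yields a (possibly infinite) limit of $E_{rel}$ as $\tau \to -\infty$.

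Third, given $t_i \to 0$, I would consider the time-translated rescaled flows $\tilde\mu^{(i)}_s := \tilde\mu_{\tau_i+s}$, which are integral Brakke flows defined on expanding intervals in $s$. Using Brakke's local area bound together with the confinement of support in $\overline{\Omega^\prime}$ and Lemma \ref{SliceAreaLem}, these flows have uniform local mass bounds, so Ilmanen's compactness theorem for integral Brakke flows produces a subsequential limit flow $\{\hat\nu_s\}_{s \in \mathbb{R}}$. Passing the monotonicity to the limit forces $s \mapsto E_{rel}[\hat\nu_s, \Gamma_0]$ to be constant, so the dissipation $\int |\mathbf{H} - \mathbf{x}^\perp/2|^2 e^{|\mathbf{x}|^2/4} d\hat\nu_s$ must vanish for a.e. $s$; hence $\hat\nu_s \equiv \hat\nu$ is stationary in rescaled time and is a (possibly singular) self-expander with $\spt(\hat\nu) \subseteq \overline{\Omega^\prime}$. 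Evaluating at $s = 0$ gives $\mathscr{D}_{t_{i_j}^{-1/2}}\mu_{t_{i_j}} \to \hat\nu$ along a subsequence, while asymptoticity to $\mathcal{C}$ follows by squeezing $\hat\nu$ between the two asymptotically conical hypersurfaces bounding $\Omega^\prime$.

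The hard part will be the rigorous derivation of the weak monotonicity of $E_{rel}$ against the unbounded weight $e^{|\mathbf{x}|^2/4}$ in step two: the cutoff template from Sections \ref{CalibrationSec}--\ref{GeneralCalibrationSec} tames the weight via $\phi_{R,\delta}$, but one must additionally handle the time derivative, the one-sidedness of the Brakke inequality, and the uniformity in $\tau$ as $R \to \infty$. A further delicate point is passing monotonicity to the limit so that the dissipation vanishes identically on $\{\hat\nu_s\}$; when $E_{rel}$ diverges in the limit $\tau \to -\infty$ one would instead work with the truncated quantities $E_{rel}[\tilde\mu_\tau, \Gamma_0; \bar B_R]$ (which are automatically finite) and use Lemma \ref{SliceAreaLem} to show the $R \to \infty$ correction is uniformly negligible.
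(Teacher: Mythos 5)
Your overall architecture matches the paper's: rescale so that self-expanders become stationary, derive a forward monotonicity formula for $E_{rel}$ along the rescaled flow, extract a limit by Brakke compactness, and use the vanishing dissipation to conclude the limit is a static self-expander. However, there is a genuine gap in step two that your plan does not close, and it is the crux of the paper's argument.

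Your strategy for the monotonicity is to test the Brakke inequality against $\phi_{R,\delta}\,e^{|\mathbf{x}|^2/4}$, control boundary terms via Lemma \ref{SliceAreaLem}, and invoke Theorem \ref{RelEntropyThm} for a lower bound. But Theorem \ref{RelEntropyThm} (and the entire machinery of Sections \ref{CalibrationSec}--\ref{GeneralCalibrationSec}) is formulated for $\Gamma=\partial^* U$ with $U$ a Caccioppoli set trapped in $\Omega^\prime$, not for general integral varifolds; the rescaled Brakke flow measures $\nu_s$ need not be boundaries. Moreover, Theorem \ref{RelEntropyThm} only supplies a one-sided bound (bounded below, possibly $+\infty$), whereas the monotonicity argument requires knowing that $E_{rel}[\nu_s,\Gamma_0]$ is finite and \emph{uniformly bounded above} in $s$ in order to pass the inequality to the limit and force the dissipation to vanish along the limit flow. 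Lemma \ref{SliceAreaLem} controls $\mathcal{H}^n(\Omega^\prime\cap\partial B_R)$, but by itself this cannot give the two-sided smallness of the annular contribution $E_{rel}[\nu_s,\Gamma_0;\bar A_{R_1,R_2}]$, because the relative entropy on an annulus requires cancellation between $\nu_s$ and $\Gamma_0$, which is a $C^1$ statement, not a $C^0$ containment statement.

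The ingredient the paper uses that your plan is missing is pseudo-locality (Ilmanen--Neves--Schulze) combined with interior estimates for mean curvature flow: these show that, outside a fixed compact set, $\nu_s$ is a smooth hypersurface $\Gamma_s$ that is $C^1$-asymptotically conical with uniform curvature decay $|\mathbf{x}||A_{\Gamma_s}|\leq\hat C_0$ and trapped between $\Gamma_0^\prime$ and $\Gamma_1^\prime$. This is precisely the hypothesis under which Lemma \ref{UniformGraphLem} and Proposition \ref{RelEntropyAnnuliProp} deliver the two-sided bound $|E_{rel}[\Gamma_s,\Gamma_0;\alpha_{R_1,R_2,\delta}]|\leq\hat C_1 R_1^{-2}$, which in turn (with Huisken's monotonicity bounding the mass inside a compact set) gives the uniform bound $|E_{rel}[\nu_s,\Gamma_0]|\leq E_0$. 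That uniform bound, together with a time-dependent Ecker--Huisken type cutoff $\phi_R(p,s)$ adapted so that $(\partial_s-\mathscr{L})\phi_R$ decays (Lemma \ref{CutoffLem}) — rather than the static $\phi_{R,\delta}$ you propose, for which $\mathscr{L}\phi_{R,\delta}$ is not well controlled — and the weighted estimate of Theorem \ref{WeightRelThm} plus the dominated convergence analogue Proposition \ref{DominatedConvProp}, is what allows the $R\to\infty$ limit in the monotonicity formula to go through. Your suggested fallback of working with truncated quantities $E_{rel}[\tilde\mu_\tau,\Gamma_0;\bar B_R]$ is in the right spirit, but without the pseudo-locality input you cannot show the $R\to\infty$ corrections are uniformly negligible.

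Once the monotonicity formula (Proposition \ref{MonotoneProp}) is in hand, your remaining steps — translating in rescaled time, Brakke compactness, constancy of $E_{rel}$ along the limit flow, vanishing dissipation, static limit, containment in $\overline{\Omega^\prime}$, and asymptoticity to $\mathcal{C}$ by squeezing — agree with the paper's Theorem \ref{WeightMonotoneThm} proof.
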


In order to prove Theorem \ref{WeightMonotoneThm}, we will need several auxiliary lemmas and propositions. The first two of these show the relative entropy near infinity is arbitrarily small for $C^2$-asymptotically conical ends trapped between the ends of $\Gamma_0^\prime$ and $\Gamma_1^\prime$.  The computations are very similar in spirit to those of \cite[Proposition 3.1]{DeruelleSchulze}.

\begin{lem} \label{UniformGraphLem}
Fix $\hat{C}_0>0$ and $\hat{R}_0>1$. There is a radius $\hat{R}_1=\hat{R}_1(\Gamma_0,\Omega^\prime,\hat{C}_0,\hat{R}_0)>\hat{R}_0$ so that if $\Gamma\in\mathcal{H}(\Gamma_0^\prime\setminus\bar{B}_{\hat{R}_0},\Gamma_1^\prime\setminus\bar{B}_{\hat{R}_0})$ is asymptotic to $\cC$ and satisfies  
$$
\sup_{p\in\Gamma} |\mathbf{x}(p)| |A_\Gamma(p)| \leq \hat{C}_0,
$$
then there is a smooth function $v\colon\Gamma_0\setminus\bar{B}_{\hat{R}_1}\to \mathbb{R}$ with $\Vert\nabla_{\Gamma_0} v \Vert_{C^0} \leq 1$ so that 
$$
\Gamma\setminus\bar{B}_{2\hat{R}_1}\subset\set{\mathbf{x}(p)+v(p)\mathbf{n}_{\Gamma_0}(p)\colon p\in\Gamma_0\setminus\bar{B}_{\hat{R}_1}}\subset\Gamma.
$$
\end{lem}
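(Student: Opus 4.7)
The plan is to promote the $C^0$-closeness of $\Gamma$ to $\Gamma_0$ at infinity, which is forced by the thinness of $\Omega^\prime$, into $C^1$-graphicality over $\Gamma_0$, by exploiting the uniform bound $|\mathbf{x}||A_\Gamma|\leq \hat{C}_0$ together with the analogous decay \eqref{LinearDecayEqn} for $\Gamma_0$. Since $\Gamma\subseteq\overline{\Omega^\prime}\setminus\bar{B}_{\hat{R}_0}$, the thinness hypothesis yields
$$
\Gamma\setminus\bar{B}_R\subset\mathcal{T}_{\bar{C}_0^\prime R^{-n-1}e^{-R^2/4}}(\Gamma_0)
$$
for all $R>\max\{\bar{R}_0^\prime,\hat{R}_0\}$. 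I would first choose $\hat{R}_1$ large (depending on $\Gamma_0,\Omega^\prime,\hat{C}_0,\hat{R}_0$) so that this tubular radius is smaller than the constant $\epsilon_0$ from Proposition \ref{FoliationProp}, and in particular much smaller than the common inverse curvature scales $|\mathbf{x}|/\hat{C}_0$ and $|\mathbf{x}|/C_{\Gamma_0,1}$. With this choice, the nearest point projection $\Pi_{\Gamma_0}$ is a well-defined smooth map on $\Gamma\setminus\bar{B}_{\hat{R}_1}$.

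Next, for each $q\in\Gamma\setminus\bar{B}_{\hat{R}_1}$, I would work at the intrinsic scale $r_q=|\mathbf{x}(q)|/(4\hat{C}_0+4C_{\Gamma_0,1})$. After rescaling by $r_q^{-1}$, both the component of $\Gamma\cap B_{r_q}(q)$ through $q$ and the component of $\Gamma_0\cap B_{r_q}(\Pi_{\Gamma_0}(q))$ through $\Pi_{\Gamma_0}(q)$ become $C^{1,1}$ graphs of small norm over their respective tangent planes, with uniform $C^2$ bounds. Because the Hausdorff distance between $\Gamma$ and $\Gamma_0$ at this scale is exponentially small in $|\mathbf{x}(q)|$, a standard interpolation (or compactness-contradiction) argument forces the two tangent planes to be close, with angle controlled by this same exponentially small distance. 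Consequently, on a neighborhood of $q$ the surface $\Gamma$ is the normal graph over $\Gamma_0$ of a function $v_q$ with exponentially small $C^1$ norm, which in particular can be made to satisfy $|\nabla_{\Gamma_0} v_q|\leq 1$.

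The main obstacle is to patch these local graphs into a single function $v$ defined on all of $\Gamma_0\setminus\bar{B}_{\hat{R}_1}$ and to verify the stated inclusions. Well-definedness on overlaps follows from the small slope: along any normal line to $\Gamma_0$ through a point of $\Gamma_0\setminus\bar{B}_{\hat{R}_1/2}$, the surface $\Gamma$ meets the line transversely in at most one point inside the relevant tubular neighborhood. For existence of an intersection on each such normal line, I would use that $\Gamma$, $\Gamma_0$, $\Gamma_0^\prime$ and $\Gamma_1^\prime$ are all asymptotic to the same cone $\cC$, so their links in $\partial B_\rho$ all converge to $\mathcal{L}(\cC)$; a Jordan-type separation argument on large spheres, combined with the trapping of $\Gamma$ between $\Gamma_0^\prime$ and $\Gamma_1^\prime$, produces the required intersection point and hence defines $v(p)$. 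The right inclusion $\{\mathbf{x}(p)+v(p)\mathbf{n}_{\Gamma_0}(p)\colon p\in\Gamma_0\setminus\bar{B}_{\hat{R}_1}\}\subset\Gamma$ is automatic by construction, while the left inclusion $\Gamma\setminus\bar{B}_{2\hat{R}_1}\subset\{\mathbf{x}(p)+v(p)\mathbf{n}_{\Gamma_0}(p)\colon p\in\Gamma_0\setminus\bar{B}_{\hat{R}_1}\}$ holds because $|\mathbf{x}(q)-\Pi_{\Gamma_0}(q)|$ decays exponentially, so $\Pi_{\Gamma_0}(q)\in\Gamma_0\setminus\bar{B}_{\hat{R}_1}$ whenever $q\in\Gamma\setminus\bar{B}_{2\hat{R}_1}$ and $\hat{R}_1$ is large enough.
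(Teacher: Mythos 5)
Your proposal is correct in outline but takes a genuinely different route from the paper. The paper's proof first observes that the hypotheses force $\Gamma$ to be embedded and $C^1$-asymptotic to $\mathcal{C}$, then reduces the lemma to proving \emph{local} graphicality over $\Gamma_0$ with a uniform gradient bound outside a uniform radius, and establishes this last point by a single blow-down compactness argument: assuming no such radius exists, one extracts points $q_i\in\Upsilon_i\cap\partial B_{R_i}$ with $R_i\to\infty$ and a uniform angle defect, rescales by $R_i^{-1}$, and passes via Arzel\`a--Ascoli to a $C^1$ limit hypersurface $\Sigma$ through $q\in\mathcal{C}\cap\partial B_1$ that simultaneously meets $\mathcal{C}$ transversally (from the angle defect) and is contained in $\mathcal{C}$ (from the trapping between $\Gamma_0'$ and $\Gamma_1'$), a contradiction. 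Your argument instead works at the intrinsic scale $r_q\sim|\mathbf{x}(q)|$ and estimates the tangent-plane tilt directly from the interplay between the exponentially small tubular-neighborhood width and the $O(|\mathbf{x}|^{-1})$ curvature bound; this can in fact be made entirely elementary (roughly $\sin\theta\lesssim\sqrt{\delta/|\mathbf{x}(q)|}$ by choosing the competitor distance $d$ appropriately), so your route avoids Arzel\`a--Ascoli where the paper uses it. You then pay for this with an explicit patching step that the paper absorbs into the single sentence ``$\Gamma$ is $C^1$-asymptotic to $\mathcal{C}$, thus it is enough to prove local graphicality.''

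One point to tighten: your assertion that uniqueness on overlaps ``follows from the small slope'' is not quite enough on its own. Two parallel small-slope sheets inside the same $\delta$-tube would be a counterexample. You need the multiplicity-one asymptotic convergence to $\mathcal{C}$ (the same ingredient you already invoke for existence via the link-separation argument) to rule out multiple sheets; the paper folds this into its appeal to $C^1$-asymptotic convergence. This is an easy fix rather than a fundamental gap, but it should be made explicit.
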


\begin{proof} 
Our hypotheses on $\Gamma$ ensures that it is embedded and $C^1$-asymptotic to $\cC$. Thus it is enough to prove that there is a uniform radius outside of which $\Gamma$ is a local graph over $\Gamma_0$ with the desired estimates. This is proved by contradiction. Indeed, suppose there was no such radius, then there would be a sequence of hypersurfaces $\Upsilon_i$ in $\mathbb{R}^{n+1}\setminus\bar{B}_{\hat{R}_0}$ satisfying the hypotheses and a sequence of points $q_i\in\Upsilon_i\cap\partial B_{R_i}$ with $R_i\geq\hat{R}_0$ going to infinity so that if $p_i$ is the nearest point projection of $q_i$ to $\Gamma_0$, then $|\mathbf{n}_{\Upsilon_i}(q_i)\cdot\mathbf{n}_{\Gamma_0}(p_i)|<\epsilon$ for some fixed $\epsilon\in (0,1)$. Up to passing to a subsequence and relabeling, $R_i^{-1} q_i\to q$ for some $q\in\mathcal{C}\cap \partial B_1$. Thus, by the linear decay on $|A_{\Upsilon_i}|$, it follows from the Arzel\`{a}-Ascoli theorem that, up to passing to a subsequence and relabeling, the $R_i^{-1}\Upsilon_i\cap B_1(R_i^{-1} q_i)$ converges in the $C^1$ topology to a $C^2$- hypersurface, $\Sigma$, in $B_1(q)$ which transversally intersects $\mathcal{C}$ at $q$. However, as $\Gamma_0$, $\Gamma_0'$ and $\Gamma_1'$ are all asymptotic to $\mathcal{C}$, the hypotheses the $\Upsilon_i$ satisfy imply that $\Sigma$ must be contained in $\mathcal{C}$. This is a contradiction. 
\end{proof}

\begin{prop} \label{RelEntropyAnnuliProp}
Fix $\hat{C}_0>0$ and $\hat{R}_0>1$. There is a radius $\hat{R}_2=\hat{R}_2(\Gamma_0,\Omega^\prime,\hat{C}_0,\hat{R}_0)>\hat{R}_0$ and a constant $\hat{C}_1=\hat{C}_1(\Gamma_0, \Omega^\prime,\hat{C}_0)>0$ so that if $\Gamma\in\mathcal{H}(\Gamma_0'\setminus\bar{B}_{\hat{R}_0},\Gamma_1'\setminus\bar{B}_{\hat{R}_0})$ is asymptotic to $\cC$ and satisfies 
$$
\sup_{p\in\Gamma} |\mathbf{x}(p)| |A_\Gamma(p)| \leq \hat{C}_0,
$$
then, for any $R_2>R_1>\hat{R}_2$ and $0<\delta<1$,
$$
\left| E[\Gamma,\Gamma_0; \alpha_{R_1,R_2,\delta}] \right| \leq \hat{C}_1 R_1^{-2}.
$$
\end{prop}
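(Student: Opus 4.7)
The plan is to combine the divergence theorem applied to the foliation vector field $\mathbf{N}$ from Proposition \ref{FoliationProp} with a graph analysis of $\Gamma$ over $\Gamma_0$ near infinity. The extra factor of $|\mathbf{x}|^{-1}$ compared to Proposition \ref{CalibrationProp} will come from two independent sources of decay: the exponential thinness of $\Omega^\prime$, and a sharp tilt-excess bound obtained from the curvature hypothesis on $\Gamma$.

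First, I would use Lemma \ref{UniformGraphLem} to fix a radius $\hat{R}_1>\hat{R}_0$ and parametrize $\Gamma\setminus\bar{B}_{2\hat{R}_1}$ as a normal graph $\mathbf{F}(p)=\mathbf{x}(p)+v(p)\mathbf{n}_{\Gamma_0}(p)$ over $\Gamma_0\setminus\bar{B}_{\hat{R}_1}$, with $\Vert\nabla_{\Gamma_0} v\Vert_{C^0}\leq 1$. Since $\Gamma\subseteq\overline{\Omega^\prime}$ and $\Omega^\prime$ is thin at infinity relative to $\Gamma_0$, the definition forces the pointwise bound $|v(p)|\leq \bar{C}_0^\prime |\mathbf{x}(p)|^{-n-1} e^{-|\mathbf{x}(p)|^2/4}$. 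The standard graph formula expressing $A_\Gamma\circ\mathbf{F}$ in terms of $A_{\Gamma_0}$, $\nabla_{\Gamma_0} v$, $\nabla_{\Gamma_0}^2 v$, and $v A_{\Gamma_0}$, combined with $|\mathbf{x}||A_\Gamma|\leq\hat{C}_0$ and $|\mathbf{x}||A_{\Gamma_0}|\leq C_{\Gamma_0,1}$ (from \eqref{LinearDecayEqn}), yields $|\nabla_{\Gamma_0}^2 v|\leq C|\mathbf{x}|^{-1}$ on geodesic unit balls of $\Gamma_0$. Standard interpolation $\Vert\nabla u\Vert_{L^\infty}^2\lesssim \Vert u\Vert_{L^\infty}\Vert\nabla^2 u\Vert_{L^\infty}$ then upgrades the $C^0$ bound on $v$ to the sharp gradient estimate
$$
|\nabla_{\Gamma_0} v(p)|^2\leq C|\mathbf{x}(p)|^{-n-2} e^{-|\mathbf{x}(p)|^2/4}.
$$

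Next, I would split $E[\Gamma,\Gamma_0;\alpha_{R_1,R_2,\delta}]=I_1+I_2$, where
$$
I_1=\int_\Gamma\alpha_{R_1,R_2,\delta}(\mathbf{N}\cdot\mathbf{n}_\Gamma)e^{|\mathbf{x}|^2/4}\,d\mathcal{H}^n-\int_{\Gamma_0}\alpha_{R_1,R_2,\delta}\, e^{|\mathbf{x}|^2/4}\,d\mathcal{H}^n
$$
(using $\mathbf{N}|_{\Gamma_0}=\mathbf{n}_{\Gamma_0}$) and $I_2=\int_\Gamma\alpha_{R_1,R_2,\delta}(1-\mathbf{N}\cdot\mathbf{n}_\Gamma)e^{|\mathbf{x}|^2/4}\,d\mathcal{H}^n$. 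Applying the divergence theorem to $I_1$ on the region between $\Gamma$ and $\Gamma_0$ (contained in $\Omega^\prime$), the bulk term is controlled by $|\Div\mathbf{N}+\mathbf{x}\cdot\mathbf{N}/2|\leq C_1|\mathbf{x}|^{-n-1} e^{-|\mathbf{x}|^2/4}$ (Proposition \ref{FoliationProp}) together with Lemma \ref{SliceAreaLem}, producing an exponentially small contribution. The cutoff term is handled directly by Lemma \ref{SliceAreaLem}: the weight $e^{|\mathbf{x}|^2/4}$ cancels the exponential factor in the slice-area estimate, so
$$
\Big|\int\nabla\alpha_{R_1,R_2,\delta}\cdot\mathbf{N}\,e^{|\mathbf{x}|^2/4}\Big|\leq \delta^{-1}\int_{R_1-\delta}^{R_1}\bar{C}_1^\prime\, t^{-2}\,dt+\delta^{-1}\int_{R_2}^{R_2+\delta}\bar{C}_1^\prime\, t^{-2}\,dt\leq CR_1^{-2}.
$$

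Finally, for $I_2$ I would use that with consistent orientations one has $1-\mathbf{N}\cdot\mathbf{n}_\Gamma=\tfrac{1}{2}|\mathbf{N}-\mathbf{n}_\Gamma|^2\geq 0$. Since $\mathbf{N}\circ\mathbf{F}=\mathbf{n}_{\Gamma_0}$, the graph formula for $\mathbf{n}_\Gamma\circ\mathbf{F}$ gives $|\mathbf{N}-\mathbf{n}_\Gamma|^2\circ\mathbf{F}\leq C|\nabla_{\Gamma_0} v|^2$. Pulling back to $\Gamma_0$ via $\mathbf{F}$, whose Jacobian is close to $1$ by the first step, and using $\mathcal{H}^{n-1}(\Gamma_0\cap\partial B_t)\leq Ct^{n-1}$, yields
$$
|I_2|\leq C\int_{\Gamma_0\cap\bar{A}_{R_1-\delta,R_2+\delta}}|\nabla_{\Gamma_0} v|^2 e^{|\mathbf{x}|^2/4}\,d\mathcal{H}^n\leq C\int_{R_1-\delta}^{R_2+\delta} t^{-3}\,dt\leq CR_1^{-2}.
$$
Combining the bounds on $I_1$ and $I_2$ yields the claim. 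The main obstacle is the interpolation step in the second paragraph: extracting the sharp $(-n-2)$-exponent in the gradient bound on $v$ requires carefully combining the exponentially small pointwise bound with only polynomial curvature control on $\nabla_{\Gamma_0}^2 v$. A naive estimate in $I_2$, without this gradient improvement, would yield only the weaker $R_1^{-1}$ decay already provided by Proposition \ref{CalibrationProp}.
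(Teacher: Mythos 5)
Your proposal is correct and delivers the claimed $R_1^{-2}$ bound, but it takes a genuinely different route from the paper's proof. Both arguments begin identically: invoke Lemma \ref{UniformGraphLem} to write $\Gamma$ as a normal graph $v$ over $\Gamma_0$, extract $|v|\leq 2\bar{C}_0^\prime |\mathbf{x}|^{-n-1}e^{-|\mathbf{x}|^2/4}$ from thinness at infinity, obtain $|\nabla^2_{\Gamma_0}v|\lesssim |\mathbf{x}|^{-1}$ from the curvature hypothesis, and then interpolate to get the sharp gradient bound $|\nabla_{\Gamma_0}v|^2\lesssim |\mathbf{x}|^{-n-2}e^{-|\mathbf{x}|^2/4}$. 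The two proofs then diverge. The paper runs a \emph{continuous} interpolation argument: it considers the homotopy $\hat\Gamma_s = \{\mathbf{x}+sv\mathbf{n}_{\Gamma_0}\}$, differentiates $s\mapsto \int_{\hat\Gamma_s}\alpha\, e^{|\mathbf{x}|^2/4}\,d\mathcal{H}^n$ via the first variation formula, controls the almost--self-expander term $|\mathbf{H}_{\hat\Gamma_s}-\mathbf{x}^\perp/2|\lesssim|\mathbf{x}|^{-1}$ with Lemma \ref{ExpanderMeanCurvLem}, and integrates over $s\in[0,1]$; the exponential smallness of $v$ beats the $|\mathbf{x}|^{-1}$ decay. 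You instead use a \emph{discrete} decomposition mirroring the calibration machinery of Section \ref{CalibrationSec}: $E[\Gamma,\Gamma_0;\alpha]=I_1+I_2$ with $I_1$ handled by the divergence theorem applied to the foliation field $\mathbf{N}$ from Proposition \ref{FoliationProp} (bulk term exponentially small by its fourth estimate, cutoff term $O(R_1^{-2})$ by Lemma \ref{SliceAreaLem}) and $I_2$ a tilt-excess term controlled by the sharp gradient estimate. Your approach reuses the already-developed machinery (the foliation vector field and the slice-area estimate), whereas the paper's is a self-contained direct computation along the homotopy; both require exactly the same interpolation step and essentially the same asymptotic input. One small point to tighten: when applying the divergence theorem in $I_1$, the region in question is the annular piece of $\Omega^\prime$ between $\Gamma$ and $\Gamma_0$ intersected with $\spt(\alpha_{R_1,R_2,\delta})$, which is disjoint from $\bar{B}_{\hat{R}_0}$ since $R_1-\delta>\hat{R}_2>\hat{R}_0$, so the fact that $\Gamma$ is only defined outside $\bar{B}_{\hat{R}_0}$ poses no difficulty — you should say this explicitly. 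Also, in $I_2$ you need (and implicitly use) that the Jacobian of $\mathbf{F}$, the ratio $e^{|\mathbf{F}(p)|^2/4}/e^{|\mathbf{x}(p)|^2/4}$, and the shift $||\mathbf{F}(p)|-|\mathbf{x}(p)||$ are uniformly bounded, all of which follow from the $C^0$ and $C^1$ bounds on $v$; worth a sentence.
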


\begin{proof}
By the definition of thin at infinity relative to $\Gamma_0$ and Lemma \ref{UniformGraphLem}, there is a radius $\hat{R}_2^\prime>\max\set{\bar{R}_0^\prime,\hat{R}_1}$, depending on $\Gamma_0,\Omega^\prime,\hat{C}_0$ and $\hat{R}_0$, so that there is a smooth function $v\colon\Gamma_0\setminus\bar{B}_{\hat{R}_2^\prime}\to\mathbb{R}$ which satisfies
$$
\Gamma\setminus\bar{B}_{2\hat{R}_2^\prime}\subset\set{\mathbf{x}(p)+v(p)\mathbf{n}_{\Gamma_0}(p)\colon p\in\Gamma_0\setminus\bar{B}_{\hat{R}_2^\prime}}\subset\Gamma
$$
and 
$$
|v(p)| \leq 2\bar{C}_0^\prime |\mathbf{x}(p)|^{-n-1} e^{-\frac{|\mathbf{x}(p)|^2}{4}} <1.
$$
Here $\bar{R}_0^\prime=\bar{R}_0^\prime(\Gamma_0, \Omega^\prime)$ and $\bar{C}_0^\prime=\bar{C}_0^\prime(\Gamma_0, \Omega^\prime)$ are determined from the definition of thin at infinity. By the linear decay of $|A_\Gamma|$ and the gradient estimate from Lemma \ref{UniformGraphLem}, there is a constant $K_0=K_0(\Gamma_0,\hat{C}_0)>0$ so that 
$$
|\nabla_{\Gamma_0}^2 v (p)| \leq K_0 |\mathbf{x}(p)|^{-1}.
$$
Thus, by the interpolation inequality \cite[Lemma 6.32]{GilbargTrudinger}, there is a $K_1=K_1(\Gamma_0,\bar{C}_0^\prime,K_0)$ (which, in turn, depends on $\Gamma_0,\Omega^\prime$ and $\hat{C}_0$) so that 
$$
|\nabla_{\Gamma_0} v(p)|^2\leq K_1 |\mathbf{x}(p)|^{-n-2} e^{-\frac{|\mathbf{x}(p)|^2}{4}}.
$$

For $0\leq s\leq 1$, let 
$$
\hat{\Gamma}_s=\set{\mathbf{f}_s(p)=\mathbf{x}(p)+sv(p)\mathbf{n}_{\Gamma_0}(p)\colon p\in\Gamma_0\setminus\bar{B}_{\hat{R}_2^\prime}}. 
$$
Observe that $\hat{\Gamma}_0=\Gamma_0\setminus\bar{B}_{\hat{R}^\prime_2}$ and $\Gamma\setminus\bar{B}_{2\hat{R}_2^\prime}\subset\hat{\Gamma}_1\subset\Gamma$. If $R_2>R_1>4\hat{R}_2^\prime$, then the bound on $v$ ensures $\spt(\alpha_{R_1,R_2,\delta})\subset\hat{\Gamma}_s$. Thus, by the first variation formula,
\begin{align*}
\frac{d}{ds} \int_{\hat{\Gamma}_s} \alpha_{R_1,R_2,\delta} e^{\frac{|\mathbf{x}|^2}{4}} \, d\mathcal{H}^n
& =\int_{\hat{\Gamma}_s} -\alpha_{R_1,R_2,\delta} \mathbf{Y}_s\cdot\left(\mathbf{H}_{\hat{\Gamma}_s}-\frac{\mathbf{x}^\perp}{2}\right) e^{\frac{|\mathbf{x}|^2}{4}} \, d\mathcal{H}^n \\
&+\int_{\hat{\Gamma}_s} \nabla\alpha_{R_1,R_2,\delta}\cdot \mathbf{Y}_s^\perp e^{\frac{|\mathbf{x}|^2}{4}} \, d\mathcal{H}^n \\
& =: I+II
\end{align*}
where $\mathbf{Y}_s=(v\mathbf{n}_{\Gamma_0})\circ\mathbf{f}_s^{-1}$ is a vector field along $\hat{\Gamma}_s$. By the above established estimates for $v$ and $\nabla_{\Gamma_0} v$ and enlarging $\hat{R}_2^\prime$ if needed, it is readily checked that, for any $0\leq s\leq 1$ and $p\in\Gamma_0\setminus\bar{B}_{\hat{R}_2^\prime}$,
$$
e^{\frac{|\mathbf{f}_s(p)|^2}{4}} \, dvol_{\hat{\Gamma}_s}(\mathbf{f}_s(p)) \leq 2 e^{\frac{|\mathbf{x}(p)|^2}{4}} \, dvol_{\Gamma_0}(p) \mbox{ and } |\nabla_{\Gamma_0}|\mathbf{f}_s(p)||\geq \frac{1}{2},
$$
and there is a $K_2=K_2(\Gamma_0,\bar{C}_0^\prime, K_1)>0$, thus depending on $\Gamma_0,\Omega^\prime,\hat{C}_0$, so that, for all $R>\hat{R}_2^\prime$,
$$
\mathcal{H}^{n-1}(\set{|\mathbf{f}_s|=R}) \leq K_2 R^{n-1}.
$$
One also appeals to the estimates for $v$ and $\nabla_{\Gamma_0}^i v$ and Lemma \ref{ExpanderMeanCurvLem} to see that if $s\in [0,1]$ and $p\in\Gamma_0\setminus\bar{B}_{\hat{R}_2^\prime}$, then
$$
\left| \mathbf{H}_{\hat{\Gamma}_s}-\frac{\mathbf{x}^\perp}{2} \right| (\mathbf{f}_s(p)) \leq K_3 |\mathbf{x}(p)|^{-1}
$$
where $K_3=K_3(\Gamma_0,\Omega^\prime,\hat{C}_0)>0$. Thus, using these estimates and the co-area formula one computes that
\begin{align*}
\left| I \right| & \leq  2K_3\int_{\Gamma_0} (\alpha_{R_1,R_2,\delta}\circ\mathbf{f}_s) |v| |\mathbf{x}|^{-1} e^{\frac{|\mathbf{x}|^2}{4}} \, d\mathcal{H}^n \\
& \leq 2K_3\int_{R_1-2}^{R_2+2} \int_{\Gamma_0\cap\partial B_t} |v| t^{-1} e^{\frac{t^2}{4}} \frac{1}{|\nabla_{\Gamma_0} |\mathbf{x}||}\, d\mathcal{H}^{n-1} dt \\
& \leq 8\bar{C}_0^\prime K_3 \int_{R_1-2}^{R_2+2} t^{-n-2} \mathcal{H}^{n-1}(\Gamma_0\cap\partial B_t) \, dt \\
& \leq 4\bar{C}_0^\prime K_3 K_2 (R_1-2)^{-2}
\end{align*}
where the second inequality used that $\spt(\alpha_{R_1,R_2,\delta}\circ\mathbf{f}_s)\subseteq \bar{A}_{R_1-2,R_2+2}$ as $\spt(\alpha_{R_1,R_2,\delta})\subseteq\bar{A}_{R_1-\delta,R_2+\delta}$ and 
$$
|\mathbf{f}_s(p)-\mathbf{x}(p)| < 1.
$$
Likewise, one has
\begin{align*}
\left| II \right| & \leq 2 \int_{\Gamma_0} |\nabla\alpha_{R_1,R_2,\delta}\circ\mathbf{f}_s| |v| e^{\frac{|\mathbf{x}|^2}{4}} \, d\mathcal{H}^n \\
& \leq 2\delta^{-1} \int_Y \int_{\set{|\mathbf{f}_s|=t}} |v| e^{\frac{|\mathbf{x}|^2}{4}} \frac{1}{|\nabla_{\Gamma_0}|\mathbf{f}_s||} \, d\mathcal{H}^{n-1}dt \\
& \leq 8\delta^{-1} \bar{C}_0^\prime \int_Y (t-1)^{-n-1} \mathcal{H}^{n-1}(\set{|\mathbf{f}_s|=t}) \, dt\\
&\leq 64\bar{C}_0^\prime K_2 (R_1-2)^{-2}
\end{align*}
where the second inequality used that $\spt(\nabla\alpha_{R_1,R_2,\delta})\subseteq\bar{A}_{R_1-\delta,R_1}\cup\bar{A}_{R_2,R_2+\delta}$ and $Y=[R_1-\delta,R_1]\cup [R_2,R_2+\delta]$. Hence, combining estimates on $I$ and $II$ gives that, as $R_1-2>\frac{1}{2} R_1$,
$$
\left| \frac{d}{ds} \int_{\hat{\Gamma}_s} \alpha_{R_1,R_2,\delta} e^{\frac{|\mathbf{x}|^2}{4}} \, d\mathcal{H}^n \right| \leq \hat{C}_1 R_1^{-2}
$$
where $\hat{C}_1=2^{8}\bar{C}_0^{\prime} K_2(K_3+1)$ depends on $\Gamma_0,\Omega^\prime$ and $\hat{C}_0$. Therefore, 
$$
\left| E[\Gamma,\Gamma_0; \alpha_{R_1,R_2,\delta}] \right| \leq \int_0^1 \left| \frac{d}{ds} \int_{\hat{\Gamma}_s} \alpha_{R_1,R_2,\delta} e^{\frac{|\mathbf{x}|^2}{4}} \, d\mathcal{H}^n \right| ds  \leq \hat{C}_1 R_1^{-2}
$$
and so the claim follows with $\hat{R}_2=4\hat{R}_2^\prime$.
\end{proof}

Given a Brakke flow $\set{\mu_t}_{t\in (0,T)}$ set
$$
\nu_s=\mathscr{D}_{t^{-1/2}}\mu_t \mbox{ where $s=\log t$}.
$$
One readily verifies that $\set{\nu_s}_{s<\log T}$ satisfies, for all nonnegative $\psi\in C_c^1(\mathbb{R}^{n+1})$ and all $-\infty<s_0\leq s_1<\log T$,
\begin{align*}
\int\psi e^{\frac{|\mathbf{x}|^2}{4}}\, d\nu_{s_1} & \leq \int\psi e^{\frac{|\mathbf{x}|^2}{4}}\, d\nu_{s_0}-\int_{s_0}^{s_1} \int \psi \left|\mathbf{H}-\frac{\mathbf{x}}{2}\cdot S^\perp\right|^2 e^{\frac{|\mathbf{x}|^2}{4}} \, d\nu_s ds \\
& +\int_{s_0}^{s_1} \int \nabla\psi\cdot S^\perp \cdot \left(\mathbf{H}-\frac{\mathbf{x}}{2}\right) e^{\frac{|\mathbf{x}|^2}{4}} \, d\nu_s ds.
\end{align*}
Such $\set{\nu_s}_{s<\log T}$ is called the \emph{associated rescaled Brakke flow}. 

We will prove a forward monotonicity formula for rescaled Brakke flows. To achieve this goal, we first introduce a useful cut-off function on space-time.

\begin{lem} \label{CutoffLem}
Consider the cut-off function
$$
\phi_R(p,s)=\left(1-R^{-2}e^s(|\mathbf{x}(p)|^2+2n)\right)^5_+.
$$
Fix any real numbers $\bar{s}_0<\bar{s}_1$. The following is true:
\begin{enumerate}
\item $\lim_{R\to\infty} \phi_R=1$ uniformly on compact subsets;
\item There is a constant $\hat{M}_0=\hat{M}_0(n,\bar{s}_0,\bar{s}_1)$ so that 
$$
\sup_{\bar{s}_0\leq s \leq \bar{s}_1} \Vert\nabla\phi_R(\cdot,s)\Vert_{C^1}+\Vert (\partial_s-\mathscr{L}) \phi_R(\cdot,s)\Vert_{C^0} \leq \hat{M}_0 R^{-1}
$$
where $\mathscr{L}=\Delta+\frac{\mathbf{x}}{2}\cdot\nabla$;
\item There is a constant $\hat{M}_1=\hat{M}_1(n,\bar{s}_0,\bar{s}_1)$ so that, for all $\bar{s}_0\leq s\leq\bar{s}_1$,
$$
 \Vert \phi_R (\cdot,s)\Vert_{C^{3}}+\Vert\partial_s \phi_R (\cdot,s)\Vert_{C^1}+\sum_{i=1}^2\Vert(1+|\mathbf{x}|)\nabla^i \phi_R (\cdot,s)\Vert_{C^0} \leq \hat{M}_1.
$$
\end{enumerate}
\end{lem}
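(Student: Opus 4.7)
The plan is to introduce the auxiliary function
$$
g(p,s) = R^{-2}e^s\bigl(|\mathbf{x}(p)|^2+2n\bigr),
$$
so that $\phi_R = (1-g)_+^5$, and to reduce every claim to a direct chain-rule calculation. Three observations drive the computation: $g$ is quadratic in $\mathbf{x}$ so its spatial derivatives of order $\geq 3$ vanish; $\spt(\phi_R(\cdot,s)) \subseteq \bar{B}_{Re^{-s/2}}$, yielding the uniform bound $R^{-2}e^s|\mathbf{x}|^2 \leq 1$ on the support; and the exponent $5$ is large enough that $\phi_R$ globally enjoys $C^4$ regularity despite the truncation.

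Item (1) is immediate: on any fixed compact $K\subset\mathbb{R}^{n+1}$ one has $\sup_{K\times[\bar{s}_0,\bar{s}_1]} g \to 0$ as $R\to\infty$, so $(1-g)_+^5 \to 1$ uniformly on $K\times[\bar{s}_0,\bar{s}_1]$.

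For item (2) the key cancellation is
$$
g - \Delta g - \tfrac{1}{2}\mathbf{x}\cdot\nabla g = R^{-2}e^s\bigl(|\mathbf{x}|^2+2n-2(n+1)-|\mathbf{x}|^2\bigr) = -2R^{-2}e^s,
$$
obtained from $\partial_s g = g$, $\nabla g = 2R^{-2}e^s\mathbf{x}$, $\Delta g = 2(n+1)R^{-2}e^s$ and $\mathbf{x}\cdot\nabla g = 2R^{-2}e^s|\mathbf{x}|^2$. Applying the chain rule to $\phi_R = (1-g)_+^5$ and collecting terms, I find
$$
(\partial_s-\mathscr{L})\phi_R = 10(1-g)_+^4 R^{-2}e^s - 80(1-g)_+^3 R^{-4}e^{2s}|\mathbf{x}|^2,
$$
and on the support the estimate $R^{-2}e^s|\mathbf{x}|^2\leq 1$ makes both terms $O(R^{-2})$, hence $\leq \hat{M}_0 R^{-1}$. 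For the gradient terms, $|\nabla\phi_R| = 5(1-g)_+^4|\nabla g| \leq 10R^{-2}e^s|\mathbf{x}|\leq 10R^{-1}e^{s/2}$ on the support, and differentiating once more yields $|\nabla^2\phi_R|\leq 5|\nabla^2 g| + 20|\nabla g|^2 = O(R^{-2})$ on the support, giving the claimed $\Vert\nabla\phi_R\Vert_{C^1}\leq \hat{M}_0 R^{-1}$.

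For item (3) the same algebraic identities together with the support inclusion $|\mathbf{x}|\leq Re^{-\bar{s}_0/2}$ bound each of $\nabla^i\phi_R$ ($i\leq 3$) and $\nabla^i\partial_s\phi_R$ ($i\leq 1$) uniformly in $R$: the powers of $|\nabla g|$, $|\nabla^2 g|$ and $g$ that appear always pair a factor $R^{-2}e^s|\mathbf{x}|^j$ with $j\leq 2$, which is bounded by $R^{j-2}e^{(1-j/2)s}$ and hence by a constant depending on $n, \bar{s}_0, \bar{s}_1$. The weighted bound $(1+|\mathbf{x}|)|\nabla^i\phi_R|$ for $i=1,2$ follows because the extra factor $1+|\mathbf{x}|\leq 1+Re^{-\bar{s}_0/2}$ costs at most one power of $R$ on the support, which is exactly compensated by the $R^{-1}$ (respectively $R^{-2}$) already present in $\nabla^i\phi_R$. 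I do not expect any substantive obstacle; the only real care is to verify that the leading-order terms really cancel in $(\partial_s-\mathscr{L})\phi_R$ and that the exponent $5$ is preserved through each differentiation so that the $(1-g)_+$ factors remain nonnegative powers and the global $C^3$ regularity is not lost.
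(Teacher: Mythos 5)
Your proposal is correct and fills in exactly the "straightforward but tedious computations" that the paper explicitly omits. The auxiliary function $g=R^{-2}e^s(|\mathbf{x}|^2+2n)$, the algebraic cancellation
$$
g-\Delta g-\tfrac12\mathbf{x}\cdot\nabla g=R^{-2}e^s\bigl[(|\mathbf{x}|^2+2n)-2(n+1)-|\mathbf{x}|^2\bigr]=-2R^{-2}e^s,
$$
the resulting identity
$$
(\partial_s-\mathscr{L})\phi_R=10(1-g)_+^4R^{-2}e^s-80(1-g)_+^3R^{-4}e^{2s}|\mathbf{x}|^2,
$$
and the support bound $R^{-2}e^s|\mathbf{x}|^2\leq 1$ all check out. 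The observation that the exponent $5$ gives global $C^4$ regularity of $h_+^5$, so that the $C^3$ (and $C^1$ of $\partial_s\phi_R$) estimates in item (3) make sense, is the one point of substance and you handle it. Minor remark: you could state item (2) more sharply -- the cancellation actually shows $\|(\partial_s-\mathscr{L})\phi_R\|_{C^0}$ and $\|\nabla^2\phi_R\|_{C^0}$ are $O(R^{-2})$ and $\|\nabla\phi_R\|_{C^0}$ is $O(R^{-1})$, so $O(R^{-1})$ for the combined $\|\nabla\phi_R\|_{C^1}$ term is actually the bottleneck -- but the claimed $O(R^{-1})$ is all the lemma asserts.
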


\begin{proof}
The first claim follows from the definition of $\phi_R$. The second and third claim can be checked by straightforward, but tedious, computations, so we omit the details. 
\end{proof}

\begin{prop} \label{MonotoneProp}
Let $\set{\mu_t}_{t\in (0,T)}$ be an integral Brakke flow that satisfies
\begin{enumerate}
\item $\lim_{t\to 0} \mu_t=\mathcal{H}^n\lfloor\mathcal{C}$;
\item For every $t\in (0,T)$, $t^{-\frac{1}{2}}\mathrm{spt}(\mu_t)\subseteq\overline{\Omega^\prime}$.
\end{enumerate}
Let $\set{\nu_s}_{s<\log T}$ be the associated rescaled flow. There is a constant $E_0=E_0(\Gamma_0,\Omega^\prime,\cC)$ so that, for all $s<\log T$, 
$$
E_{rel}[\nu_s,\Gamma_0]=\lim_{R\to\infty}\left(\int_{\bar{B}_R} e^{\frac{|\mathbf{x}|^2}{4}} \, d\nu_s-\int_{\bar{B}_R\cap \Gamma_0} e^{\frac{|\mathbf{x}|^2}{4}} \, d\mathcal{H}^n\right) $$
exists and is bounded by $E_0$. Moreover, for any $-\infty<\bar{s}_0<\bar{s}_1<\log T$, if $f\geq 0$ satisfies
$$
\hat{M}=\sup_{\bar{s}_0\leq s\leq\bar{s}_1} \Vert f(\cdot,s)\Vert_{C^{3}}+\Vert\partial_s f(\cdot,s)\Vert_{C^1}+\sum_{i=1}^2\Vert(1+|\mathbf{x}|)\nabla^i f(\cdot,s)\Vert_{C^0}<\infty,
$$
then, for all $\bar{s}_0\leq s_0\leq s_1\leq\bar{s}_1$,
\begin{equation}\label{ErelMonIneq}
\begin{split}
 E_{rel}[\nu_{s_0},\Gamma_0; f] & \geq E_{rel}[\nu_{s_1},\Gamma_0; f] 
 +\int_{s_0}^{s_1} \int f \left|\mathbf{H}-\frac{\mathbf{x}}{2}\cdot S^\perp\right|^2 e^{\frac{|\mathbf{x}|^2}{4}}\, d\nu_s ds \\  & -\int_{s_0}^{s_1} E_{rel}\left[\nu_s,\Gamma_0; (\partial_s-\mathscr{L}) f+Q_{\nabla^2 f}\right] ds.
\end{split}
\end{equation}
Here $S=S(\mathbf{x})=T_{\mathbf{x}}\nu_s$ and $Q_{\nabla^2 f}(p,\mathbf{v})=\nabla^2f(p,s)(\mathbf{v},\mathbf{v}  )$.
\end{prop}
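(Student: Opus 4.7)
My plan is to derive the monotonicity \eqref{ErelMonIneq} from the rescaled Brakke inequality applied to the space--time cutoff $\psi=\phi_R(\cdot,s)f(\cdot,s)$, using the weighted first variation for integral varifolds to convert the first-order drift term into a second-order one, then subtracting the corresponding equality on the static self-expander $\Gamma_0$, and finally passing $R\to\infty$. The existence and uniform bound on $E_{rel}[\nu_s,\Gamma_0]$ will come out of the same argument specialized to $f\equiv 1$ together with a limit as $s\to-\infty$.

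At the $R$-truncated level, the key input is the weighted first variation: for any smooth compactly supported $\mathbf{Y}$,
$$\int\mathbf{Y}\cdot\mathbf{H}\,e^{\frac{|\mathbf{x}|^2}{4}}\,d\nu_s=-\int\bigl(\Div_S\mathbf{Y}+\tfrac{\mathbf{x}^T}{2}\cdot\mathbf{Y}\bigr)e^{\frac{|\mathbf{x}|^2}{4}}\,d\nu_s.$$
Taking $\mathbf{Y}=\nabla(\phi_R f)$ and using $\Div_S\nabla g=\Delta g-Q_{\nabla^2 g}(\cdot,\mathbf{n})$ on a codimension-one integral varifold, the contributions involving $\mathbf{x}^\perp/2$ cancel against those coming from the $S^\perp$-projection, and one obtains
$$\int\nabla(\phi_R f)\cdot S^\perp\cdot\bigl(\mathbf{H}-\tfrac{\mathbf{x}}{2}\bigr)e^{\frac{|\mathbf{x}|^2}{4}}\,d\nu_s=-\int\bigl(\mathscr{L}(\phi_R f)-Q_{\nabla^2(\phi_R f)}(\cdot,\mathbf{n})\bigr)e^{\frac{|\mathbf{x}|^2}{4}}\,d\nu_s.$$
Substituting this into the time-dependent version of the rescaled Brakke inequality (which gains a $\partial_s\psi$ term on the right) and noting that on $\Gamma_0$ the same calculation is an equality (because $\mathbf{H}_{\Gamma_0}-\mathbf{x}^\perp/2=0$), the subtraction $\int\,d\nu_s-\int\,d\mathcal{H}^n|_{\Gamma_0}$ produces the $R$-truncated form of \eqref{ErelMonIneq} with right-hand side $\int_{s_0}^{s_1}E_{rel}[\nu_s,\Gamma_0;(\partial_s-\mathscr{L})(\phi_R f)+Q_{\nabla^2(\phi_R f)}]\,ds$ minus the error-squared term.

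Passing $R\to\infty$ uses all three claims of Lemma~\ref{CutoffLem}. Expanding $(\partial_s-\mathscr{L})(\phi_R f)$ by the Leibniz rule and $Q_{\nabla^2(\phi_R f)}$ by polarization splits the weight into a main piece $\phi_R\cdot[(\partial_s-\mathscr{L})f+Q_{\nabla^2 f}]$ plus error pieces whose $\mathfrak{X}$-norms each contain a factor of $(\partial_s-\mathscr{L})\phi_R$ or $\nabla\phi_R$ and are thus $O(R^{-1})\hat{M}$; Theorem~\ref{WeightRelThm} controls the corresponding relative entropies by $C\|\cdot\|_\mathfrak{X}(1+|E_{rel}[\nu_s,\Gamma_0]|)$, which vanish once $E_{rel}[\nu_s,\Gamma_0]$ is known to be finite. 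The main piece is handled by Proposition~\ref{DominatedConvProp} using the pointwise convergence $\phi_R\to 1$ with uniform $\mathfrak{X}$-bound from Claim~(3), and the $s$-integration commutes with the limit by dominated convergence; the dissipation term $\phi_R f|\mathbf{H}-\mathbf{x}/2\cdot S^\perp|^2$ is handled by monotone convergence since $\phi_R f\uparrow f\geq 0$.

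For the first assertion, running the same argument with $f\equiv 1$ (so the $Q$- and $\mathscr{L}$-terms vanish) shows that $E_{rel}[\nu_s,\Gamma_0]$ is non-increasing in $s$. Theorem~\ref{RelEntropyThm} provides the lower bound $E_{rel}[\nu_s,\Gamma_0]\geq-\bar E(\Omega^\prime,\Gamma_0)$ since $\spt(\nu_s)\subseteq\overline{\Omega^\prime}$. For the upper bound I send $s\to-\infty$: hypothesis~(1) together with the scale invariance of $\cC$ yields $\nu_s\to\mathcal{H}^n\lfloor\cC$, and Proposition~\ref{RelEntropyAnnuliProp} applied to the smooth end $\cC\setminus\bar{B}_1$ (which has $|\mathbf{x}||A_\cC|$ bounded and is sandwiched between the ends of $\Gamma_0^\prime$ and $\Gamma_1^\prime$) gives that $E_{rel}[\mathcal{H}^n\lfloor\cC,\Gamma_0]$ is finite, providing the constant $E_0(\Gamma_0,\Omega^\prime,\cC)$. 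The main obstacle is the circularity: the $R\to\infty$ step above invokes Theorem~\ref{WeightRelThm} and so requires finiteness of $E_{rel}[\nu_s,\Gamma_0]$, while that finiteness itself emerges from the monotonicity. I would break the circle by first running the $f\equiv 1$ monotonicity purely at finite $R$ (where all quantities are a priori bounded and no input from Theorem~\ref{WeightRelThm} is needed), using the $s\to-\infty$ limit to get a uniform bound on $E[\nu_s,\Gamma_0;\phi_R]$ in both $s$ and $R$, and only then letting $R\to\infty$ to deduce $|E_{rel}[\nu_s,\Gamma_0]|\leq E_0$; with that in hand, the full $R\to\infty$ limit for general $f$ goes through.
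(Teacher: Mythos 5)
Your treatment of the monotonicity formula \eqref{ErelMonIneq} itself — the weighted first variation applied to $\mathbf{Y}=\nabla(\phi_R f)$, converting $\nabla(\phi_R f)\cdot S^\perp\cdot(\mathbf{H}-\mathbf{x}/2)$ into $-\mathscr{L}(\phi_R f)+Q_{\nabla^2(\phi_R f)}(\cdot,\mathbf{n})$ via $\Div_S\nabla g=\Delta g-Q_{\nabla^2 g}(\cdot,\mathbf{n})$, then passing $R\to\infty$ using Lemma \ref{CutoffLem}, Proposition \ref{DominatedConvProp}, and Theorem \ref{WeightRelThm} — is essentially the paper's argument, and your sign bookkeeping checks out.

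The gap is in the first claim (existence and uniform bound of $E_{rel}[\nu_s,\Gamma_0]$), and your circularity-breaking device does not actually close it. At finite $R$ the $f\equiv 1$ monotonicity inequality carries error terms of the form $\int E[\nu_s,\Gamma_0;\zeta_R]\,ds$ with $\zeta_R$ involving $(\partial_s-\mathscr{L})\phi_R$ and $\nabla\phi_R$, supported where $|\mathbf{x}|\sim R$ and of $C^0$ size $O(R^{-1})$. To turn the finite-$R$ inequality into a bound on $E[\nu_s,\Gamma_0;\phi_R]$ that survives $R\to\infty$ you must show these error terms vanish, but the only estimate available without extra input — Huisken's $\nu_s(B_R)\lesssim R^n$ together with the weight $e^{R^2/4}$ — gives $|E[\nu_s,\Gamma_0;\zeta_R]|=O(R^{n-1}e^{R^2/4})$, which diverges. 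The cancellation between the $\nu_s$ and $\Gamma_0$ integrals that makes these terms decay is exactly the content of Lemma \ref{SliceAreaLem}, Theorem \ref{WeightRelThm}, and Proposition \ref{RelEntropyAnnuliProp}, and each of those requires either finiteness of $E_{rel}[\nu_s,\Gamma_0]$ or a priori smoothness with uniform curvature decay of $\nu_s$ near infinity. So the $s\to-\infty$ limit argument never gets off the ground.

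The ingredient you are missing is pseudo-locality. The paper invokes the pseudo-locality theorem of Ilmanen--Neves--Schulze together with interior estimates for MCF to conclude that there are fixed $\hat{R}_0,\hat{C}_0$ so that for \emph{every} $s$, $\nu_s\lfloor(\mathbb{R}^{n+1}\setminus\bar{B}_{\hat{R}_0})=\mathcal{H}^n\lfloor\Gamma_s$ for a smooth asymptotically conical hypersurface $\Gamma_s$ with $|\mathbf{x}||A_{\Gamma_s}|\leq\hat{C}_0$. Proposition \ref{RelEntropyAnnuliProp} is then applied to $\Gamma_s$ itself (not to $\cC$), yielding $|E[\Gamma_s,\Gamma_0;\alpha_{R_1,R_2,\delta}]|\leq\hat{C}_1 R_1^{-2}$ uniformly in $s$; combined with Huisken's monotonicity to bound the compact part, this gives existence of the limit and $|E_{rel}[\nu_s,\Gamma_0]|\leq E_0$ \emph{directly}, for each $s$, with no $s\to-\infty$ limit and no circularity. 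With that established, your $R\to\infty$ passage for general $f$ becomes legitimate. Without pseudo-locality you have neither a proof of the first claim nor a non-circular proof of the second.
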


\begin{rem}
When $\set{\mu_t}_{t\in (0,T)}$ is a smooth MCF there is an equality in \eqref{ErelMonIneq}.
\end{rem}

\begin{proof}[Proof of Proposition \ref{MonotoneProp}]
By our hypotheses, it follows from the pseudo-locality result \cite[Theorem 1.5]{IlmanenNevesSchulze} and interior regularity for mean curvature flow \cite{EHInterior} (cf. \cite[Proposition 3.3]{BWProperness}) that there are sufficiently large constants $\hat{R}_0=\hat{R}_0(\mathcal{C})$ and $\hat{C}_0=\hat{C}_0(\mathcal{C})$ so that, for every $s<\log T$, there is an asymptotically conical hypersurface $\Gamma_s\in \mathcal{H}(\Gamma_0^\prime\setminus\bar{B}_{\hat{R}_0},\Gamma_1^\prime\setminus\bar{B}_{\hat{R}_0})$ that satisfies 
$$
\sup_{p\in\Gamma_s} |\mathbf{x}(p)| |A_{\Gamma_s}(p)| \leq \hat{C}_0
\mbox { and }
\nu_s\lfloor\mathbb{R}^{n+1}\setminus\bar{B}_{\hat{R}_0}=\mathcal{H}^n\lfloor\Gamma_s.
$$
It follows from Proposition \ref{RelEntropyAnnuliProp} and the dominated convergence theorem that, for any $R_2>R_1>\hat{R}_2$, 
\begin{align*}
\left| \int_{\bar{A}_{R_1, R_2}} e^{\frac{|\mathbf{x}|^2}{4}} \, d\nu_s -\int_{\bar{A}_{R_1,R_2}\cap \Gamma_0 } e^{\frac{|\mathbf{x}|^2}{4}} \, d\mathcal{H}^n \right|=\lim_{\delta \to 0} \left| E_{rel}[\Gamma_s, \Gamma_0; \alpha_{R_2,R_1, \delta}] \right|\leq \hat{C}_1 R_1^{-2}
\end{align*}
where $\hat{R}_2$ and $\hat{C}_1$ both depend only on $\Gamma_0$, $\Omega^\prime$ and $\cC$. It follows immediately that 
$$
 E_{rel}[\nu_s, \Gamma_0]=\lim_{R\to \infty} \left(  \int_{\bar{B}_R} e^{\frac{|\mathbf{x}|^2}{4}} \, d\nu_s -\int_{\bar{B}_R\cap \Gamma_0 } e^{\frac{|\mathbf{x}|^2}{4}} \, d\mathcal{H}^n \right)
$$
exists and is finite. By Huisken's monotonicity formula \cite{Huisken}, for all $s<\log T$ and all $R>1$,
$$
\nu_s(B_R) \leq K_0 R^n
$$
where $K_0=K_0(\mathcal{C})>0$ and so
$$
\left|  \int_{B_{2\hat{R}_2}} e^{\frac{|\mathbf{x}|^2}{4}} \, d\nu_s -\int_{B_{2\hat{R}_2}\cap \Gamma_0 } e^{\frac{|\mathbf{x}|^2}{4}} \, d\mathcal{H}^n\right| \leq E_1
$$
where $E_1=E_1(\Gamma_0,\hat{R}_2,K_0)$, in turn, depends only on $\Gamma_0,\Omega^\prime$ and $\cC$. Hence, by the triangle inequality and the two bounds already established, for any $R>2\hat{R}_2$,
$$
\left|\int_{\bar{B}_R} e^{\frac{|\mathbf{x}|^2}{4}} \, d\nu_s -\int_{\bar{B}_R\cap \Gamma_0 } e^{\frac{|\mathbf{x}|^2}{4}} \, d\mathcal{H}^n \right|\leq E_1+\frac{1}{2} \hat{C}_1 \hat{R}_2^{-2}
$$
and so the first claim follows with $E_0=E_1+\frac{1}{2} \hat{C}_1 \hat{R}_2^{-1}$ depending on $\Gamma_0,\Omega^\prime$ and $\cC$.

To prove the forward monotonicity formula, appealing to \cite[Section 3.5]{Brakke} and the divergence theorem, one computes
\begin{equation} \label{CutoffMonotoneEqn}
\begin{split}
E[\nu_{s_0},\Gamma_0; \phi_R f] & \geq E[{\nu_{s_1},\Gamma_0; \phi_R f}]+\int_{s_0}^{s_1} \int \phi_R f \left|\mathbf{H}-\frac{\mathbf{x}}{2}\cdot S^\perp\right|^2 e^{\frac{|\mathbf{x}|^2}{4}} \, d\nu_s ds \\
& -\int_{s_0}^{s_1} E[\nu_s,\Gamma_0; \zeta_R] \, ds
\end{split}
\end{equation}
where 
\begin{align*}
\zeta_R &= \phi_R \left(\partial_s-\mathscr{L}\right) f+\phi_R Q_{\nabla^2f} +f \left(\partial_s-\mathscr{L}\right)\phi_R+fQ_{\nabla^2\phi_R}\\
&-2\nabla\phi_R\cdot\nabla f+2Q_{\nabla\phi_R (\nabla f)^T}\in C^0_c(\Real^{n+1}\times \mathbb{S}^n\times[\bar{s}_0,\bar{s}_1]).
\end{align*}

The hypotheses on $f$ and Lemma \ref{CutoffLem} ensure that $\zeta_R(\cdot,s)\in \mathfrak{X}^{e}(\Real^{n+1})$ and, moreover, $\Vert \zeta_R(\cdot,s) \Vert_{\mathfrak{X}}$ has a uniform (in $s$ and $R$) bound in terms of $n, \hat{M}$ and $\hat{M}_1$. The hypotheses on $f$ and Lemma \ref{CutoffLem} further imply that, for each fixed $s$,
$$
\lim_{R\to 0} \zeta_R = \left( \partial_s -\mathscr{L}\right)f+Q_{\nabla^2f}  \mbox{ uniformly on compact subsets.}
$$
By linearity,
$$
E_{rel}[\nu_s,\Gamma_0;\zeta_R]=E_{rel}[\Gamma_s,\Gamma_0; (1-\phi_{2\hat{R}_2,\delta})\zeta_R]+E[\nu_s,\Gamma_0; \phi_{2\hat{R}_2,\delta} \zeta_R].
$$
As $ \phi_{2\hat{R}_2,\delta} \zeta_R$  has compact support, the uniform convergence implies
$$
\lim_{R\to \infty} E[\nu_s,\Gamma_0; \phi_{2\hat{R}_2,\delta} \zeta_R]=E[\nu_s,\Gamma_0; \phi_{2\hat{R}_2,\delta} \left( \partial_s -\mathscr{L}\right)f+Q_{\nabla^2f}]
$$
Likewise, as uniform convergence on compact sets implies pointwise convergence, Proposition \ref{DominatedConvProp} implies
$$
\lim_{R\to \infty} E[\nu_s,\Gamma_0; (1-\phi_{2\hat{R}_2,\delta} )\zeta_R]=E[\nu_s,\Gamma_0; (1-\phi_{2\hat{R}_2,\delta} )\left( \partial_s -\mathscr{L}\right)f+Q_{\nabla^2f}].
$$
Hence,
$$
\lim_{R\to \infty} E[\nu_s,\Gamma_0;  \zeta_R]=E[\nu_s,\Gamma_0;  \left( \partial_s -\mathscr{L}\right)f+Q_{\nabla^2f}]
$$
Finally, by (suitably modifying) Theorem \ref{WeightRelThm}, one has
$$
\left|E[\nu_s,\Gamma_0;  \zeta_R]\right| \leq C_9(1+E_0)\Vert  \zeta_R\Vert_{\mathfrak{X}}
$$
is uniformly bounded  on compact intervals of time. Hence, by the dominated convergence theorem,
$$
\lim_{R\to\infty} \int_{s_0}^{s_1} E[\nu_s,\Gamma_0; \zeta_R] \, ds=\int_{s_0}^{s_1} E[\nu_s,\Gamma_0; \left(\partial_s-\mathscr{L}\right)f+Q_{\nabla^2f}] \, ds.
$$

Similarly, for each fixed $s$, as $\lim_{R\to\infty} \phi_R f=f$ pointwise and $\Vert\phi_Rf(\cdot,s)\Vert_{Lip}$ has a uniform (in $R$) bound, it follows from Proposition \ref{DominatedConvProp} and the dominated convergence theorem that 
\begin{align*}
\lim_{R\to\infty} E[\nu_s,\Gamma_0; \phi_R f] & =\lim_{R\to\infty} \left(E[\Gamma_s,\Gamma_0; (1-\phi_{2\hat{R}_2,\delta})\phi_R f]+E[\nu_s,\Gamma_0; \phi_{\hat{R}_2,\delta} \phi_R f] \right)\\
&=E[\Gamma_s,\Gamma_0; (1-\phi_{2\hat{R}_2,\delta})f]+E[\nu_s,\Gamma_0; \phi_{\hat{R}_2,\delta} f]=E[\nu_s,\Gamma_0; f].
\end{align*} 

Therefore, \eqref{ErelMonIneq} follows from \eqref{CutoffMonotoneEqn} by sending $R\to\infty$ and the monotone convergence theorem.
\end{proof}

We are now ready to prove Theorem \ref{WeightMonotoneThm}.

\begin{proof}[Proof of Theorem \ref{WeightMonotoneThm}]
Let $\set{\nu_s}_{s<\log T}$ be the associated rescaled Brakke flow. By Proposition \ref{MonotoneProp} with $f\equiv 1$, 
$$
\lim_{s_i\to -\infty} \int_{-\infty}^{s_i} \int \left|\mathbf{H}-\frac{\mathbf{x}}{2}\cdot S^\perp\right|^2 e^{\frac{|\mathbf{x}|^2}{4}} \, d\nu_s ds=0.
$$
Let $\nu_s^i=\nu_{s+s_i}$ and so each $\set{\nu^i_{s}}_{s<\log T-s_i}$ is an integral rescaled Brakke flow. By the area estimates and Brakke's compactness theorem, \cite{Brakke} or \cite[Section 7]{IlmanenElliptic}, there is a  subsequence $i_j\to\infty$ so that
$$
\set{\nu_s^{i_j}}_{s<\log T-s_{i_j}}\to\set{\hat{\nu}_s}_{s\in\mathbb{R}}
$$
as rescaled flows. It is not hard to see that, for any $s$,
$$
E_{rel}[\hat{\nu}_s,\Gamma_0]=E_{-\infty}
$$
and, for any $s_0\leq s_1$,
$$
\int_{s_0}^{s_1} \int \left|\mathbf{H}-\frac{\mathbf{x}}{2}\cdot S^\perp\right|^2 e^{\frac{|\mathbf{x}|^2}{4}} \, d\hat{\nu}_s ds=0.
$$
In particular, for a.e. $s$, $\hat{\nu}_s$ is a critical point for the functional $E$. This implies $\hat{\nu}_s=\hat{\nu}$ is static and, as $\spt(\nu^{i_j}_s)\subseteq\overline{\Omega^\prime}$, it follows that $\spt(\hat{\nu})\subseteq\overline{\Omega^\prime}$. Finally, as observed in the proof of Proposition \ref{MonotoneProp}, the $\nu_s$ are $C^1$-asymptotic to $\mathcal{C}$ in a uniform manner and so $\hat{\nu}$ is also asymptotic to $\mathcal{C}$. The claim follows from this by unwinding the construction of $\nu_s^{i_j}$.
\end{proof}

\appendix

\section{Auxiliary lemmas} \label{AuxiliaryApp}
For a hypersurface $\Sigma$, let 
$$
\mathscr{L}^\mu_\Sigma=\Delta_\Sigma+\frac{\mathbf{x}}{2}\cdot\nabla_\Sigma-\mu
$$
and when $\mu=\frac{1}{2}$ we write $\mathscr{L}^{\frac{1}{2}}_\Sigma=\mathscr{L}_\Sigma$. We then let
$$
L_\Sigma=\mathscr{L}_\Sigma+|A_\Sigma|^2=\Delta_\Sigma+\frac{\mathbf{x}}{2}\cdot\nabla_\Sigma-\frac{1}{2}+|A_\Sigma|^2.
$$

\begin{lem} \label{BarrierEqnLem}
If $\Sigma$ is a $C^2$-asymptotically conical self-expanding end in $\mathbb{R}^{n+1}$, then
$$
\mathscr{L}^0_\Sigma \left(r^d e^{-\frac{r^2}{4}}\right)=-\frac{1}{2}\left(n+d+O(r^{-2})\right)r^d e^{-\frac{r^2}{4}}
$$
where $r(p)=|\mathbf{x}(p)|$ for $p\in\Sigma$.
\end{lem}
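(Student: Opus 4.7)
The idea is to write $u = f(r)$ with $f(r) = r^d e^{-r^2/4}$, compute $\mathscr{L}^0_\Sigma u$ by the chain rule, and track the two leading orders carefully, using the self-expander equation plus the curvature decay $|A_\Sigma| = O(r^{-1})$ (which holds for $C^2$-asymptotically conical self-expanders, cf.\ the proof of Proposition \ref{StrongDecayProp}) to control the errors.

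\textbf{Step 1: geometric preliminaries.} First I will record the basic expansions for the intrinsic geometry of $r$ on $\Sigma$. Writing $\mathbf{x} = \mathbf{x}^T + \mathbf{x}^\perp$, the self-expander equation $\mathbf{H}_\Sigma = \mathbf{x}^\perp/2$ together with $|A_\Sigma| = O(r^{-1})$ gives $|\mathbf{x}^\perp| = O(r^{-1})$. From $\nabla_\Sigma r = \mathbf{x}^T/r$ I then obtain
\begin{equation*}
|\nabla_\Sigma r|^2 = 1 - \frac{|\mathbf{x}^\perp|^2}{r^2} = 1 + O(r^{-4}), \qquad \mathbf{x}\cdot\nabla_\Sigma r = \frac{|\mathbf{x}^T|^2}{r} = r + O(r^{-3}).
\end{equation*}
Using $\Delta_\Sigma\mathbf{x} = \mathbf{H}_\Sigma$ one computes $\Delta_\Sigma |\mathbf{x}|^2 = 2n + 2\mathbf{x}\cdot\mathbf{H}_\Sigma = 2n + |\mathbf{x}^\perp|^2$; dividing by $2r$ and subtracting $|\nabla_\Sigma r|^2/r$ yields
\begin{equation*}
\Delta_\Sigma r = \frac{n-1}{r} + O(r^{-3}).
\end{equation*}

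\textbf{Step 2: reduce to an ODE.} Applying the chain rule,
\begin{equation*}
\mathscr{L}^0_\Sigma u = f''(r)|\nabla_\Sigma r|^2 + f'(r)\Delta_\Sigma r + \tfrac{1}{2} f'(r)(\mathbf{x}\cdot\nabla_\Sigma r).
\end{equation*}
Substituting the three expansions of Step 1 gives
\begin{equation*}
\mathscr{L}^0_\Sigma u = f''(r) + f'(r)\Bigl(\frac{n-1}{r} + \frac{r}{2}\Bigr) + O(r^{-4})f''(r) + O(r^{-3})f'(r).
\end{equation*}
Since $f$ is a Gaussian-like function, $f'(r)/f(r) = d/r - r/2 = O(r)$ and $f''(r)/f(r) = O(r^2)$, so the two error terms contribute only $O(r^{-2}) f(r)$.

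\textbf{Step 3: the ODE computation.} Now I would compute the ``Euclidean'' expression $f''(r) + f'(r)\bigl(\frac{n-1}{r} + \frac{r}{2}\bigr)$ directly. Using $f'/f = d/r - r/2$ and $f''/f = (f'/f)^2 + (f'/f)' = \frac{r^2}{4} - d - \frac{1}{2} + (d^2 - d)r^{-2}$, I get
\begin{equation*}
\frac{1}{f(r)}\Bigl[f''(r) + f'(r)\Bigl(\tfrac{n-1}{r} + \tfrac{r}{2}\Bigr)\Bigr] = \Bigl(\tfrac{r^2}{4} - d - \tfrac{1}{2}\Bigr) + \Bigl(\tfrac{d}{2} - \tfrac{r^2}{4}\Bigr) - \tfrac{n-1}{2} + O(r^{-2}) = -\tfrac{n+d}{2} + O(r^{-2}),
\end{equation*}
after the crucial cancellation of the $\pm r^2/4$ terms. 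Combining with Step 2 gives the stated identity.

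\textbf{Obstacle.} The proof is essentially a bookkeeping exercise, so the only real subtlety is ensuring that the remainders aggregate to $O(r^{-2})$ rather than something larger. This is what makes the decay rate $|\mathbf{x}^\perp| = O(r^{-1})$ (equivalently, $\mathbf{H}_\Sigma = O(r^{-1})$) essential in Step 1: without it, $\Delta_\Sigma r$ and $|\nabla_\Sigma r|^2$ would carry errors that spoil the final rate.
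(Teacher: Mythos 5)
Your proof is correct and follows essentially the same route as the paper's: apply the chain rule to reduce $\mathscr{L}^0_\Sigma(r^d e^{-r^2/4})$ to an ODE in $r$ plus error terms governed by the asymptotics of $|\nabla_\Sigma r|^2$ and $\Delta_\Sigma r$, then let the $\pm r^2/4$ cancellation produce the stated leading order. The only difference is that you additionally derive the expansions $|\nabla_\Sigma r|^2 = 1 + O(r^{-4})$, $\mathbf{x}\cdot\nabla_\Sigma r = r + O(r^{-3})$ and $\Delta_\Sigma r = (n-1)/r + O(r^{-3})$ from the self-expander equation and the curvature decay $|A_\Sigma| = O(r^{-1})$ (correctly flagging that the strong $O(r^{-4})$ rate is what keeps the $f''$ error controlled), whereas the paper simply cites these as consequences of the hypotheses.
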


\begin{proof}
By the chain rule
$$
\nabla_\Sigma \left(r^d e^{-\frac{r^2}{4}}\right)=\left(\frac{d}{r}-\frac{r}{2}\right)r^d e^{-\frac{r^2}{4}} \nabla_\Sigma r
$$
and
$$
\Delta_\Sigma \left(r^d e^{-\frac{r^2}{4}}\right)=\left\{\left(\frac{r^2}{4}-d-\frac{1}{2}+\frac{d^2-d}{r^2}\right)|\nabla_\Sigma r|^2+\left(\frac{d}{r}-\frac{r}{2}\right) \Delta_\Sigma r\right\} r^d e^{-\frac{r^2}{4}}.
$$
Thus, combining these gives
$$
\mathscr{L}_\Sigma^0 \left(r^d e^{-\frac{r^2}{4}}\right)=\left\{\left(-\frac{d+1}{2}+\frac{d^2-d}{r^2}\right)|\nabla_\Sigma r|^2+\left(\frac{d}{r}-\frac{r}{2}\right) \Delta_\Sigma r\right\} r^d e^{-\frac{r^2}{4}}.
$$
Observe that by our hypotheses on $\Sigma$
$$
|\nabla_\Sigma r|^2=1+O(r^{-4}) \mbox{ and } \Delta_\Sigma r=\frac{n-1}{r}+O(r^{-3}).
$$
Hence, 
$$
\mathscr{L}_\Sigma^0 \left(r^d e^{-\frac{r^2}{4}}\right)=-\frac{1}{2}\left(n+d+O(r^{-2})\right) r^d e^{-\frac{r^2}{4}},
$$
proving the claim.
\end{proof}

\begin{lem} \label{ExpanderMeanCurvLem}
Fix an $\bar{M}_0>1$ and suppose $\Sigma$ is a self-expander in an open subset of $\mathbb{R}^{n+1}$ with $\sup_{\Sigma}|A_\Sigma|+|\nabla_\Sigma A_\Sigma|\leq \bar{M}_0$. If $v\in C^2(\Sigma)$ with $\Vert v \Vert_{C^2}\leq (8\bar{M}_0)^{-1}$ is such that $\mathbf{h}=\mathbf{x}|_\Sigma+v\mathbf{n}_\Sigma$ is a $C^{2}$ embedding, then at $p\in \Sigma$
$$
H_{\mathbf{h}(\Sigma)}+\frac{\mathbf{x}}{2}\cdot\mathbf{n}_{\mathbf{h}(\Sigma)}=-L_\Sigma v+Q(v,\mathbf{x}\cdot \nabla_\Sigma v,\nabla_\Sigma v,\nabla_\Sigma^2 v)
$$
where $Q$ depends on $p,v$, and $\Sigma$ and is a homogeneous degree-two polynomial of the form
$$
Q(s,\rho,\mathbf{d},\mathbf{T})=\mathbf{a}(s, \rho, \mathbf{d}, \mathbf{T})\cdot \mathbf{d} + b(s, \mathbf{d}, \mathbf{T}) s.
$$
Here $\mathbf{a}$ and $b$ are homogeneous degree-one polynomials with coefficients bounded by $\bar{C}_1=\bar{C}_1(n,\bar{M}_0).$
\end{lem}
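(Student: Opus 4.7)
The plan is to Taylor-expand $F(v):=H_{\mathbf{h}(\Sigma)}+\tfrac{1}{2}\mathbf{x}\cdot\mathbf{n}_{\mathbf{h}(\Sigma)}$ about $v\equiv 0$, cancel the zeroth-order piece using the self-expander equation, identify the first-order part with $-L_\Sigma v$, and then analyze the structure of the remainder. First I would write out the normal-graph data: with $A_\Sigma:=d\mathbf{n}_\Sigma$ the shape operator,
$$d\mathbf{h}_p(X)=(I+vA_\Sigma)X+\langle\nabla_\Sigma v,X\rangle\,\mathbf{n}_\Sigma,\qquad \mathbf{n}_v=\frac{\mathbf{n}_\Sigma-P}{\sqrt{1+|P|^2}},\qquad P:=(I+vA_\Sigma)^{-1}\nabla_\Sigma v,$$
together with the induced metric $g_v(X,Y)=\langle(I+vA_\Sigma)X,(I+vA_\Sigma)Y\rangle+\langle\nabla_\Sigma v,X\rangle\langle\nabla_\Sigma v,Y\rangle$ and the usual formula for the shape operator of $\mathbf{h}(\Sigma)$. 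The hypothesis $\Vert v\Vert_{C^2}\leq(8\bar{M}_0)^{-1}$ makes $I+vA_\Sigma$ invertible with uniformly bounded inverse, so the Neumann series $(I+vA_\Sigma)^{-1}=\sum_{k\ge 0}(-vA_\Sigma)^k$ converges absolutely.

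Next, a direct Taylor expansion gives the standard linearization
$$H_{\mathbf{h}(\Sigma)}=H_\Sigma-(\Delta_\Sigma v+|A_\Sigma|^2 v)+R_H,$$
while $\mathbf{x}(\mathbf{h}(p))=\mathbf{x}(p)+v\mathbf{n}_\Sigma(p)$ and $P\perp\mathbf{n}_\Sigma$ yield
$$\tfrac{1}{2}\mathbf{x}\cdot\mathbf{n}_v=\frac{\mathbf{x}\cdot\mathbf{n}_\Sigma-\mathbf{x}\cdot P+v}{2\sqrt{1+|P|^2}}=\tfrac{1}{2}\mathbf{x}\cdot\mathbf{n}_\Sigma+\tfrac{1}{2}v-\tfrac{1}{2}\mathbf{x}\cdot\nabla_\Sigma v+R_N,$$
where $R_H,R_N$ collect the nonlinear terms. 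Adding and invoking the self-expander equation $H_\Sigma+\tfrac{1}{2}\mathbf{x}\cdot\mathbf{n}_\Sigma=0$ kills the zeroth-order piece, and the linear-in-$v$ part sums to $-L_\Sigma v$; one then sets $Q:=R_H+R_N$.

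To check the stated algebraic form of $Q$, observe that $R_H$ is built entirely from $g_v$, $A_\Sigma$, and $\nabla_\Sigma A_\Sigma$, with no explicit occurrence of $\mathbf{x}$; because at $(v,\nabla_\Sigma v)=(0,0)$ the graph shares its $1$-jet with $\Sigma$, every nonlinear contribution to $R_H$ must carry a factor of $v$ or $\nabla_\Sigma v$. For $R_N$, the only source of $\mathbf{x}$-dependence is the term $\mathbf{x}\cdot P$; expanding the Neumann series,
$$\mathbf{x}\cdot P=\mathbf{x}\cdot\nabla_\Sigma v+\sum_{k\ge 1}(-v)^k(A_\Sigma^k\mathbf{x}^\top)\cdot\nabla_\Sigma v,$$
so beyond the linear piece $\rho:=\mathbf{x}\cdot\nabla_\Sigma v$ every further contribution carries an additional factor of $\nabla_\Sigma v$. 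Likewise $(1+|P|^2)^{-1/2}$ expands in even powers of $|P|$ and therefore of $|\nabla_\Sigma v|$ (modulo $v$-factors absorbed into coefficients). Thus every nonlinear monomial of $Q$ is divisible by $v$ or by some component of $\nabla_\Sigma v$, and any $\rho$-dependence is always paired with $\nabla_\Sigma v$; this is exactly the form $Q=\mathbf{a}(s,\rho,\mathbf{d},\mathbf{T})\cdot\mathbf{d}+b(s,\mathbf{d},\mathbf{T})\,s$ with $b$ independent of $\rho$.

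The main obstacle is the coefficient bounds: although $|A_\Sigma|\leq\bar{M}_0$, the vectors $A_\Sigma^k\mathbf{x}^\top$ a priori grow with $|\mathbf{x}|$. Here the self-expander equation is essential. Differentiating $-2H_\Sigma=\mathbf{x}\cdot\mathbf{n}_\Sigma$ in a tangent direction $X$ yields $-2X(H_\Sigma)=\mathbf{x}\cdot d\mathbf{n}_\Sigma(X)=\langle A_\Sigma\mathbf{x}^\top,X\rangle$, so
$$A_\Sigma\mathbf{x}^\top=-2\nabla_\Sigma H_\Sigma,\qquad |A_\Sigma^k\mathbf{x}^\top|\leq 2n\bar{M}_0^k\quad(k\geq 1),$$
using $|\nabla_\Sigma H_\Sigma|\leq n|\nabla_\Sigma A_\Sigma|\leq n\bar{M}_0$. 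Combined with the geometric decay of the Neumann series (guaranteed by $\Vert v\Vert_{C^2}\leq(8\bar{M}_0)^{-1}$), this forces every coefficient of $\mathbf{a}$ and $b$ to be bounded by some $\bar{C}_1=\bar{C}_1(n,\bar{M}_0)$ independent of $\mathbf{x}$, completing the proof.
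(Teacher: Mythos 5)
Your proof is correct, but it takes a different route from the paper's. The paper starts from the explicit linearization formula for $H_\Gamma+\frac{\mathbf{x}}{2}\cdot\mathbf{n}_\Gamma$ proved in \cite[Lemma 7.2]{BWBanach} (expressed through $\mathscr{L}_\Sigma(v\mathbf{n}_\Sigma)$ and the metric difference $g_{\mathbf{h}}^{-1}-g_\Sigma^{-1}$), expands the individual pieces ($(g_{\mathbf{h}}^{-1}-g_\Sigma^{-1})^{ij}$, $\nabla_\Sigma^2\mathbf{h}$, $\mathbf{n}_\Gamma\circ\mathbf{h}$) into leading plus quadratic parts, and uses the identity $\mathscr{L}^0_\Sigma\mathbf{n}_\Sigma+|A_\Sigma|^2\mathbf{n}_\Sigma=\mathbf{0}$ from \cite[Lemma 5.9]{BWBanach} to reduce $\mathscr{L}_\Sigma(v\mathbf{n}_\Sigma)\cdot\mathbf{n}_\Sigma$ to $\mathscr{L}_\Sigma v-|A_\Sigma|^2 v$. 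You instead Taylor-expand $H_{\mathbf{h}(\Sigma)}$ and $\frac{1}{2}\mathbf{x}\cdot\mathbf{n}_v$ directly from the normal-graph formulas and sum via a Neumann series in $vA_\Sigma$; the self-expander equation cancels the zeroth-order piece and the linear part assembles into $-L_\Sigma v$ exactly as in the paper. What your proof buys that is worth noting: it surfaces the observation that $A_\Sigma\mathbf{x}^\top=-2\nabla_\Sigma H_\Sigma$, which is what guarantees the coefficients of $\mathbf{a}$ involving $\mathbf{x}$-dependence remain bounded by a constant depending only on $n$ and $\bar{M}_0$. The paper's proof also implicitly needs this (the term $\frac{v}{2}\,A_\Sigma\mathbf{x}^\top\cdot\mathbf{a}_3(\nabla_\Sigma v)$ appears in $Q_4$, and its coefficient is bounded only thanks to this identity), but the paper does not state the bound on $A_\Sigma\mathbf{x}^\top$ explicitly; your argument makes this step precise. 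The paper's route is shorter given the cited lemmas; yours is more self-contained and more transparent about exactly where the self-expander hypothesis is used twice (once to kill the zeroth-order term and once to tame the $\mathbf{x}$-growth).
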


\begin{proof}
Denote by $\Gamma=\mathbf{h}(\Sigma)$. First, by \cite[Lemma 7.2]{BWBanach},
\begin{equation} \label{ExpanderMeanCurvEqn}
H_{\Gamma}+\frac{\mathbf{x}}{2}\cdot\mathbf{n}_\Gamma=-\left(\mathscr{L}_\Sigma(v\mathbf{n}_\Sigma)+\sum_{i,j=1}^n (g_{\mathbf{h}}^{-1}-g_\Sigma^{-1})^{ij}(\nabla_\Sigma^2 \mathbf{h})_{ij}\right)\cdot(\mathbf{n}_\Gamma\circ\mathbf{h})
\end{equation}
where $g_\mathbf{h}$ and $g_\Sigma$ are the pull-back metrics of the Euclidean one via $\mathbf{h}$ and $\mathbf{x}|_\Sigma$, respectively, and we used the fact $\mathscr{L}_\Sigma\mathbf{x}=\mathbf{0}$. One readily computes that
$$
(g_{\mathbf{h}})_{ij}=(g_\Sigma)_{ij} +\partial_i v \partial_j v+2v (A_\Sigma)_{ij}+v^2 \sum_{k=1}^n (A_{\Sigma})_{ik}(A_\Sigma)^{k}_j
$$
and so the hypotheses ensure 
$$
2g_{\Sigma}> g_\mathbf{h}>\frac{1}{2} g_\Sigma.
$$
Using this, a direct computation gives 
\begin{align*}
(g_\mathbf{h}^{-1}-g_\Sigma^{-1})^{ij}&=
-2A_{\Sigma}^{ij} v +Q_1^{ij} (v,\nabla_\Sigma v)
\end{align*}
where $Q_1$ is a homogeneous degree-two polynomial valued in $(2,0)$-tensors and  of the form
$$
Q_1(v, \nabla_\Sigma v)=\mathbf{a}_1(\nabla_\Sigma v) \cdot \nabla_\Sigma v + b_1( v) v
$$ 
where $\mathbf{a}_1$ and $b_1$ are homogeneous degree-one polynomials valued in $(2,0)$-tensors and with coefficients bounded by $K_1=K_1(n,\bar{M}_0)$. Likewise,
$$
\nabla_\Sigma^2 \mathbf{h}=\nabla_\Sigma^2 \mathbf{x}|_{\Sigma}+\mathbf{Q}_2( v, \nabla_\Sigma v, \nabla^2_\Sigma v) 
$$
where $\mathbf{Q}_2$ is a degree-one polynomial with coefficients bounded by $K_2=K_2(\bar{M}_0)$ and valued in vector-valued symmetric $(0,2)$-tensors. Finally, 
$$
\mathbf{n}_\Gamma\circ\mathbf{h}=\mathbf{n}_\Sigma+\mathbf{Q}_3 (\nabla_\Sigma v)
$$
where $\mathbf{Q}_3$ is a vector-valued homogeneous degree-one polynomial of the form
$$
\mathbf{Q}_3(\nabla_\Sigma v)= \mathbf{a}_3(\nabla_\Sigma v)+\mathbf{a}_3^\prime(\nabla_\Sigma v) \mathbf{n}_\Sigma.
$$
Here $\mathbf{a}_3$ and $\mathbf{a}_3^\prime$ have coefficients bounded by $K_3=K_3(n,\bar{M}_0)$ and $\mathbf{a}_3\cdot \mathbf{n}_\Sigma=0$.

By \cite[Lemma 5.9]{BWBanach}, on a self-expander 
$$
\mathscr{L}_\Sigma^0\mathbf{n}_\Sigma+|A_\Sigma|^2\mathbf{n}_\Sigma=\mathbf{0}.
$$
Using this, one obtains
\begin{align*}
\mathscr{L}_\Sigma(v\mathbf{n}_\Sigma)\cdot (\mathbf{n}_\Gamma\circ\mathbf{h}) &= \mathscr{L}_\Sigma(v\mathbf{n}_\Sigma)\cdot\mathbf{n}_\Sigma+\mathscr{L}_\Sigma(v\mathbf{n}_\Sigma)\cdot \mathbf{Q}_3(\nabla_\Sigma v)\\
&=\mathscr{L}_\Sigma(v\mathbf{n}_\Sigma)\cdot\mathbf{n}_\Sigma+Q_4(v, \nabla_\Sigma v, \mathbf{x}\cdot\nabla_\Sigma v, \nabla^2_\Sigma v)
\end{align*}
where $Q_4$ is a homogeneous degree-two polynomial of the form
$$
Q_4(v, \nabla_\Sigma v, \mathbf{x}\cdot\nabla_\Sigma v, \nabla^2_\Sigma v)=\mathbf{a}_4(v,  \mathbf{x}\cdot\nabla_\Sigma v, \nabla_\Sigma v,\nabla^2_\Sigma v)\cdot \nabla_\Sigma v.
$$
Moreover, the coefficients of $\mathbf{a}_4$ are bounded by $K_4=K_4(n,\bar{M}_0)$. Similarly, as $\mathbf{n}_\Sigma\cdot (\nabla_{\Sigma}^2 \mathbf{x})_{ij}=(A_\Sigma)_{ij}$,
\begin{align*}
(\mathbf{n}_\Gamma\circ\mathbf{h})\cdot \sum_{i,j=1}^n (g_{\mathbf{h}}^{-1}-g_\Sigma^{-1})^{ij}(\nabla_\Sigma^2 \mathbf{h})_{ij} &=2v \mathbf{n}_\Sigma\cdot\sum_{i,j=1}^n -A_\Sigma^{ij} (\nabla_\Sigma^2 \mathbf{x}|_{\Sigma})_{ij} +Q_5(v, \nabla_\Sigma v, \nabla_\Sigma^2 v)\\
&=2 |A_\Sigma|^2 v +Q_5( v, \nabla_\Sigma v, \nabla_\Sigma^2 v)
\end{align*}
where $Q_5(v, \nabla_\Sigma v, \nabla_\Sigma^2 v)$ is a homogeneous degree-two polynomial of the form
$$
Q_5(v, \nabla_\Sigma v, \nabla_\Sigma^2 v) =\mathbf{a}_5(v, \nabla_\Sigma v, \nabla_\Sigma^2 v)\cdot \nabla_\Sigma v+ b_5(v, \nabla_\Sigma v, \nabla_\Sigma^2 v) v
$$
and the coefficients of $\mathbf{a}_5$ and of $b_5$ are bounded by $K_5=K_5(n,\bar{M}_0)$.

Hence, substituting these into the above expressions into the formula \eqref{ExpanderMeanCurvEqn} gives 
\begin{align*}
H_\Gamma+\frac{\mathbf{x}}{2}\cdot\mathbf{n}_\Gamma = -\left(\mathscr{L}_\Sigma(v\mathbf{n}_\Sigma)\cdot\mathbf{n}_\Sigma+2 |A_\Sigma|^2 v \right)+Q( v,  \mathbf{x}\cdot \nabla_\Gamma v,\nabla_\Sigma v , \nabla_\Sigma^2 v).
\end{align*}
Here $Q=-Q_4-Q_5$, and so is of the form desired and with coefficients bounded by $\bar{C}_1=\bar{C}_1(n,\bar{M}_0)$.

Finally, we compute
$$
\mathscr{L}_\Sigma(v\mathbf{n}_\Sigma)\cdot\mathbf{n}_\Sigma =\mathscr{L}_\Sigma v+v\mathbf{n}_\Sigma\cdot\mathscr{L}^0_\Sigma\mathbf{n}_\Sigma+2(\nabla_\Sigma v\cdot\nabla_\Sigma\mathbf{n}_\Sigma)\cdot\mathbf{n}_\Sigma
=\mathscr{L}_\Sigma v-|A_\Sigma|^2 v,
$$
which completes the proof.
\end{proof}

\section{Geometric computations}

\begin{prop}\label{VolumeEstProp}
Let $\sigma$ be a $C^2$-hypersurface in $\mathbb{S}^n$ with unit normal $\nu_\sigma$ and assume that
$$
K_\sigma=\sup_{p\in\sigma} |A_\sigma(p)|<\infty.
$$
There is a constant $\delta_0=\delta_0(K_\sigma,n)\in (0,1)$ so that if $\theta\colon\sigma\to (0,\frac{\pi}{2})$ satisfies $\Vert\theta\Vert_1<\delta_0$, then the set 
$$
\omega=\set{\cos(t\theta(p))\mathbf{x}(p)+\sin(t\theta(p))\nu_\sigma (p)\colon 0<t<1, p\in\sigma}
$$ 
is an open domain in $\mathbb{S}^n$ with the volume estimate
$$
\mathcal{H}^n(\omega)\leq 2\int_{\sigma} \theta\, d\mathcal{H}^{n-1}.
$$
\end{prop}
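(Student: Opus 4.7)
The plan is to analyze the parameterization
$$\Phi\colon\sigma\times(0,1)\to\mathbb{S}^n,\qquad \Phi(p,t)=\cos(t\theta(p))\mathbf{x}(p)+\sin(t\theta(p))\nu_\sigma(p)$$
whose image is exactly $\omega$. The key observation is that for fixed $p$, the curve $s\mapsto\cos(s)\mathbf{x}(p)+\sin(s)\nu_\sigma(p)$ is the unit-speed geodesic in $\mathbb{S}^n$ starting at $p$ with initial velocity $\nu_\sigma(p)$, so $\Phi$ is the normal exponential map with radial cut-off $\theta$. I will bound $\mathcal{H}^n(\omega)$ via the area formula, for which it suffices to produce a good pointwise bound on the Jacobian $J\Phi$.

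The main computation is as follows. Fix $p\in\sigma$, an orthonormal frame $\{e_i\}_{i=1}^{n-1}$ of $T_p\sigma$, and write $S$ for the shape operator of $\sigma\subset\mathbb{S}^n$ with respect to $\nu_\sigma$. Since $\nabla^{\mathbb{R}^{n+1}}_{e_i}\nu_\sigma=-S(e_i)$ by the Gauss formula for $\mathbb{S}^n\subset\mathbb{R}^{n+1}$, one computes
$$\partial_t\Phi=\theta(p)\,\xi,\qquad \partial_{e_i}\Phi=\cos(t\theta)\,e_i-\sin(t\theta)\,S(e_i)+t(\partial_{e_i}\theta)\,\xi,$$
where $\xi=-\sin(t\theta)\mathbf{x}(p)+\cos(t\theta)\nu_\sigma(p)$ is the unit tangent to the geodesic at $\Phi(p,t)$. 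A direct check using the orthogonality relations $e_i\perp\mathbf{x}$, $e_i\perp\nu_\sigma$, $S(e_i)\perp\mathbf{x}$, $S(e_i)\perp\nu_\sigma$ shows that $\xi$ is orthogonal (in $\mathbb{R}^{n+1}$) to the Jacobi-field contribution $\cos(t\theta)e_i-\sin(t\theta)S(e_i)$. Consequently, setting $w_i=t\partial_{e_i}\theta$ and $M=(\cos(t\theta)I-\sin(t\theta)S)^2$, the Gram matrix of $\{\partial_{e_1}\Phi,\ldots,\partial_{e_{n-1}}\Phi,\partial_t\Phi\}$ is
$$G=\begin{pmatrix} M+ww^\top & \theta w \\ \theta w^\top & \theta^2\end{pmatrix}.$$
The Schur complement identity then yields the crucial cancellation $\det G=\theta^2\det M$, so
$$J\Phi(p,t)=\theta(p)\cdot\bigl|\det(\cos(t\theta)I-\sin(t\theta)S)\bigr|=\theta(p)\prod_{i=1}^{n-1}|\cos(t\theta)-\kappa_i\sin(t\theta)|,$$
where $\kappa_i$ are the principal curvatures of $\sigma$; note that the tangential derivatives of $\theta$ drop out.

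Since $|\kappa_i|\leq K_\sigma$ and $0<t\theta(p)\leq\|\theta\|_1\leq\delta_0$, I will choose $\delta_0=\delta_0(K_\sigma,n)\in(0,1)$ small enough so that $\cos(\delta_0)-K_\sigma\sin(\delta_0)>0$ and $(1+K_\sigma\delta_0)^{n-1}\leq 2$. This simultaneously guarantees $0<J\Phi\leq 2\theta$. Positivity of $J\Phi$ implies $\Phi$ is a local diffeomorphism and hence $\omega=\Phi(\sigma\times(0,1))$ is open in $\mathbb{S}^n$. The area formula applied to the smooth map $\Phi$ then finishes the proof:
$$\mathcal{H}^n(\omega)\leq\int_0^1\int_\sigma J\Phi(p,t)\,d\mathcal{H}^{n-1}(p)\,dt\leq 2\int_\sigma\theta\,d\mathcal{H}^{n-1}.$$

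The only real obstacle is the Gram-matrix calculation—specifically, verifying that the rank-one perturbation $ww^\top$ coming from the gradient of $\theta$ is cancelled exactly by the Schur complement. Once that is established, the geometric meaning (the Jacobian factors as the radial stretch $\theta$ times the Jacobian of the normal exponential map of $\sigma$ in $\mathbb{S}^n$) makes the bound transparent and the remaining estimates routine.
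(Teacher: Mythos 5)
Your proposal is correct and follows essentially the same strategy as the paper: write down the parameterization $\Phi$ of $\omega$, compute the Gram matrix of its partial derivatives, bound the resulting Jacobian by $2\theta$ for $\delta_0$ small, and conclude via the area formula. The one genuine refinement is your Schur-complement identity $\det G=\theta^2\det M$, which shows that the $\nabla_\sigma\theta$ contribution cancels \emph{exactly} and yields the clean factorization $J\Phi=\theta\prod_i|\cos(t\theta)-\kappa_i\sin(t\theta)|$; the paper instead writes out the same Gram-matrix entries and simply asserts that the volume form is $\leq 2\theta\,dx\,dt$ once $\delta_0$ is small, without making the cancellation explicit. Your version makes it transparent that only the $C^0$ smallness of $\theta$ (and the curvature bound $K_\sigma$) is actually needed for the Jacobian estimate, whereas the paper's hypothesis $\Vert\theta\Vert_1<\delta_0$ carries the gradient along for the ride. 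Both arguments then obtain openness of $\omega$ from nondegeneracy of the Jacobian, and the volume bound from the area formula (which, as you note, gives $\mathcal H^n(\omega)\leq\int J\Phi$ even without injectivity).
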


\begin{proof}
Fix any point $p\in\sigma$. Let $\phi^{-1}$ be the normal coordinates on an open neighborhood of $p$ in $\sigma$; i.e., $\phi\colon B^{n-1}_\epsilon\to \sigma\subset\mathbb{R}^{n+1}$ is a $C^2$ diffeomorphism onto its image so that $\phi(\mathbf{0})=p$ and, for $1\leq i,j \leq n-1$,
$$
\partial_{x_i} \phi(\mathbf{0})\cdot\partial_{x_j}\phi(\mathbf{0})=\delta_{ij} \mbox{ and } \nabla_{\sigma}\nu_\sigma (p)\cdot\partial_{x_i}\phi(\mathbf{0})=\kappa_i \partial_{x_i}\phi(\mathbf{0})
$$
where the $\kappa_i$ are principle curvatures of $\sigma$ at $p$. Write $\theta(x)=\theta(\phi(x))$ and $\nu_\sigma(x)=\nu_\sigma(\phi(x))$. Define
$$
\mathbf{f}(t,x)=\cos(t\theta(x))\phi(x)+\sin(t\theta(x))\nu_\sigma(x).
$$
	
Next we compute $\mathbf{f}^* dvol_{\mathbb{S}^n}(t,\mathbf{0})$. A straightforward computation gives that
\begin{align*}
\partial_t\mathbf{f} (t,\mathbf{0}) &= -\sin(t\theta(\mathbf{0}))\theta(\mathbf{0})\phi(\mathbf{0})+\cos(t\theta(\mathbf{0}))\theta(\mathbf{0})\nu_\sigma(\mathbf{0}); \mbox{ and;} \\
\partial_{x_i}\mathbf{f} (t,\mathbf{0}) & =-t\sin(t\theta(\mathbf{0})) \partial_{x_i}\theta(\mathbf{0})\phi(\mathbf{0})+t\cos(t\theta(\mathbf{0}))\partial_{x_i}\theta(\mathbf{0})\nu_\sigma(\mathbf{0}) \\
& \quad +(\cos(t\theta(\mathbf{0}))+\kappa_i\sin(t\theta(\mathbf{0})))\partial_{x_i}\phi(\mathbf{0}).
\end{align*}
It follows that
\begin{align*}
\partial_t\mathbf{f}(t,\mathbf{0})\cdot\partial_t\mathbf{f}(t,\mathbf{0}) & = \theta^2(\mathbf{0}); \\
\partial_t\mathbf{f}(t,\mathbf{0})\cdot\partial_{x_i}\mathbf{f}(t,\mathbf{0}) & = t\theta(\mathbf{0})\partial_{x_i}\theta(\mathbf{0}); \mbox{ and;}\\
\partial_{x_i}\mathbf{f}(t,\mathbf{0})\cdot\partial_{x_j}\mathbf{f}(t,\mathbf{0}) & = \delta_{ij}+(\kappa_i+\kappa_j)\cos(t\theta(\mathbf{0}))\sin(t\theta(\mathbf{0}))\delta_{ij}\\
& +(\kappa_i\kappa_j-1)\sin^2(t\theta(\mathbf{0})))\delta_{ij}+t^2\partial_{x_i}\theta(\mathbf{0})\partial_{x_j}\theta(\mathbf{0}),
\end{align*}
where we used the fact that $|\phi|=|\nu|=1$ and $\phi\cdot\nu=\partial_{x_i}\phi\cdot\nu=0$. Hence, if $\delta_0$ is chosen sufficiently small, then $|\sin(t\theta(\mathbf{0}))|\leq t\theta(\mathbf{0})$ and 
$$
0<\mathbf{f}^* dvol_{\mathbb{S}^n}(t,\mathbf{0}) \leq 2\theta(\mathbf{0})\, dxdt.
$$
In particular, $\mathbf{f}$ is a $C^1$ diffeomorphism from $(0,1)\times\sigma$ onto its image and so the set $\omega$ is an open domain in $\mathbb{S}^n$.
\end{proof}

\end{document}